\newtheorem{theorem}{Theorem}[section]
\newtheorem{lemma}[theorem]{Lemma}
\newtheorem{corollary}[theorem]{Corollary}
\newtheorem{proposition}[theorem]{Proposition}
\newtheorem{example}[theorem]{Example}
\numberwithin{equation}{section}
\theoremstyle {definition}
\newtheorem{definition}[theorem]{Definition}
\newtheorem{remark}[theorem]{Remark}
\DeclareMathOperator{\area}{area}
\DeclareMathOperator{\id}{id}
\DeclareMathOperator{\Ric}{Ric}
\begin{document}

\title{Positive mass theorems of ALF and ALG manifolds}

\author{Peng Liu}
\address [Peng Liu] {Key Laboratory of Pure and Applied Mathematics, School of Mathematical Sciences, Peking University, Beijing, 100871, P.\ R.\ China}
\email{1801110011@pku.edu.cn}
\author{Yuguang Shi}
\address [Yuguang Shi] {Key Laboratory of Pure and Applied Mathematics, School of Mathematical Sciences, Peking University, Beijing, 100871, P.\ R.\ China}
\email{ygshi@math.pku.edu.cn}
\thanks{Yuguang Shi is partially supported by National Key R$\&$D Program of China, Grant NO.11731001.}

\author{Jintian Zhu}
\address [Jintian Zhu]   {Key Laboratory of Pure and Applied Mathematics, School of Mathematical Sciences, Peking University, Beijing, 100871, P. R. China}
\email{zhujt@pku.edu.cn}

\renewcommand{\subjclassname}{
 \textup{2010} Mathematics Subject Classification}
\subjclass[2010]{Primary 53C20; Secondary 83C99}

\date{\today}

\begin{abstract}
In this paper, we want to prove positive mass theorems for ALF and ALG manifolds with model spaces $\mathbb R^{n-1}\times \mathbb S^1$ and $\mathbb R^{n-2}\times \mathbb T^2$ respectively in dimensions no greater than $7$ (Theorem \ref{ALFPMT0}). { Different from the compatibility condition for spin structure in \cite[Theorem 2]{minerbe2008a}, we show that some type of incompressible condition for $\mathbb S^1$ and $\mathbb T^2$ is enough to guarantee the nonnegativity of the mass.} As in the asymptotically flat case, we reduce the desired positive mass theorems to those ones concerning non-existence of positive scalar curvature metrics on closed manifolds coming from generalize surgery to $n$-torus. { Finally, we investigate certain fill-in problems and obtain an optimal bound for total mean curvature of admissible fill-ins for flat product $2$-torus $\mathbb S^1(l_1)\times \mathbb S^1(l_2)$.}

\end{abstract}	
\markboth{Liu Peng Shi Yuguang, Zhu Jintian  }{Positive mass theorems of ALF and ALG manifolds}
\maketitle
\section{Introduction}

In the past a few decades, the positive mass theorem of asymptotically flat (AF) manifolds is one of basic results both in geometry of scalar curvature and the General Relativity (see \cite{SY2017}). From geometry as well as the String Theory, people have great intentions to generalize the positive mass theorem to more general situations { and many attempts have been done in this aspect} (see \cite{brill1991negative}, \cite{minerbe2008a}, \cite{dai2004a} and references therein).
{ In this paper, we would like to make a further discussion on positive mass theorems for ALF and ALG manifolds. These manifolds arise naturally from the research for asymptotically loacally flat gravitational instantons (see \cite{Gabor2006} for details), which have attracted great attention of mathematicians.

Although ALF and ALG manifolds should have more general definitions, we will only focus on the following special cases that are enough for our purpose.
Let $n\geq 4$. A complete $n$-manifold $(M,g)$ is called {\it ALF of Schwarzschild type} with decay order $\mu$ if there is a compact subset $K$ such that $M-K$ is diffeomorphic to $(\mathbb R^{n-1}-\mathbb B)\times \mathbb S^1$ and that the pullback metric is in the form
$$
g=\left(1+\frac{m}{2r^{n-3}}\right)^{\frac{4}{n-3}}(\mathrm dr^2+r^2\mathrm d\phi^2)+l^2\mathrm d\theta^2+\sigma
$$
with
$$\sum^2_{k=0}\sum_{|\alpha|=k}r^k|\partial^\alpha \sigma| =O(r^{-\mu}),\quad\text{as}\quad r\to\infty,\quad \mu>n-3,$$
where $m$, $l$ and $\mu$ are certain constants, $(r,\phi)$ is the polar coordinate system of $\mathbb R^{n-1}$ and $\mathrm d\theta$ is the length element of the unit circle.
In above definition, the constant $m$ is called the {\it mass} of the ALF manifold $(M,g)$. We have to mention that Minerbe gave a definition of mass in \cite{minerbe2008a} for more general ALF manifolds and our mass actually coincides with his one.

In dimension three, we can also define {\it an ALF manifold of conical type} with decay order $\mu$, i.e. a complete $3$-manifold $(M,g)$ with compact subset $K$ such that $M-K$ is diffeomorphic to $(\mathbb R^{2}-\mathbb B)\times \mathbb S^1$ and that the pullback metric is in the form
$$
g=\mathrm dr^2+\beta^2 r^2\mathrm d\phi^2+l^2\mathrm d\theta^2+\sigma
$$
with
$$\sum^2_{k=0}\sum_{|\alpha|=k}r^k|\partial^\alpha \sigma| =O(r^{-\mu}),\quad\text{as}\quad r\to\infty,\quad \mu>0.$$
The constant $2\pi\beta$ is called the cone angle of the ALF $(M,g)$. After replacing $(\mathbb R^{n-1}-\mathbb B)\times \mathbb S^1$ by $(\mathbb R^{n-2}-\mathbb B)\times \mathbb T^2$ and $l^2\mathrm d\theta^2$ by a flat metric on $\mathbb T^2$
we can define ALG manifolds of Schwarzschild type and conical type with decay order $\mu$ in a similar way.

From the philosophy of the positive mass theorem for AF manifolds, one would like to deduce non-negative mass $m$ or cone angle no greater than $2\pi$ from the assumption of non-negative scalar curvature. Unfortunately, this is not always the case since} the Euclidean Reissnet-Nordstrom metric on $\mathbb R^2\times \mathbb S^2$ (see \cite{brill1991negative}) serves as a counterexample.

\begin{example}\label{NTmetric}
	Let $M^4=\mathbb{R}^2 \times \mathbb{S}^2$ and $m$, $q$  be two constants with $m<0$ and $q>0$. Define

$$ g=\left(1-\frac{2 m}{r}-\frac{q^{2}}{r^{2}}\right) d \theta^{2}+\left(1-\frac{2 m}{r}-\frac{q^{2}}{r^{2}}\right)^{-1} d r^{2}+r^{2} d \omega^{2},
$$
with
$$r\geq r_+=m+\sqrt{m^2+q^2},$$
where $d \omega^{2}$ is the standard round metric on the unit sphere $\mathbb{S}^2$, $\theta$ is in a circle with length of  $\tau$ denoted by $\mathbb{S}^1_\tau \subset \mathbb{R}^2$. We may choose  a suitable $\tau$ so that $g$ is smooth when $r=r_+$, hence,  $\tau$ depends only on $m$ and $q$, then $g$ is an ALF metric on $M$ with being asymptotic to the geometry of $\mathbb{R}^3 \times \mathbb{S}^1$ and with scalar curvature $R=0$, while its mass (in Minerbe's definition) equals to $2m$ that is negative.
\end{example}

{
From this point of view, some further conditions are needed to guarantee the desired positive mass theorem. In this aspect, Minerbe came up with two types of conditions in \cite{minerbe2008a} to make the positive mass theorem become true. With curvature condition enhanced, he showed that an ALF manifolds with non-negative Ricci curvature has a non-negative mass. After this he also proved the same thing for spin ALF manifolds with non-negative scalar curvature whose spin structure at infinity is induced from $(\mathbb R^{n-1}-\mathbb B)\times \mathbb S^1$.
Compared with Minerbe's work, we choose a very different solution based on the topological structure of ALF and ALG manifolds. In our understanding, the key feature of Example \ref{NTmetric} is that the $\mathbb S^1$-factor at infinity can shrink to a point and these counterexamples can be avoided by some type of incompressible conditions of $\mathbb S^1$ or $\mathbb T^2$-factor at infinity.}
In fact, we can prove the following

\begin{theorem}\label{ALFPMT0}
Let $(M^n,g)$ be an ALF or ALG  manifold with scalar curvature $R_g\geq 0$ and $n\leq 7$. Denote $\mathbb S^1$ and $\mathbb T^2$ to be the $\mathbb T^k$-factor of $M$ at infinity. Then we have the following:
\begin{itemize}
\item if $M$ is ALF of Schwarzschild type with decay order $\mu>2n-6$ and homotopy non-trivial $\mathbb S^1$-factor or ALG of Schwarzschild type with decay order $\mu>2n-8$ such that the map $i_*:\pi_1(\mathbb T^2)\to \pi_1(M)$ is injective, then the mass $m$ of $(M,g)$ is nonnegative. Moreover, the mass is zero if and only if it is isometric to $\mathbb R^{n-1}\times \mathbb S^1(l)$ (ALF case) or $\mathbb R^{n-2}\times \mathbb T^2$ (ALG case) with a flat $\mathbb T^2$;
\item if $M$ is ALF of conical type with decay order $\mu>1$ and homotopy non-trivial $\mathbb S^1$-factor or ALG of conical type with decay order $\mu>1$ such that the map $i_*:\pi_1(\mathbb T^2)\to \pi_1(M)$ is injective, then the cone angle $2\pi\beta$ of $(M,g)$ is no greater than $2\pi$, where the equality holds if and only if $M$ is isometric to $\mathbb R^2\times \mathbb S^1(l)$ (ALF case) or $\mathbb R^2\times \mathbb T^2$ (ALG case) with a flat $\mathbb T^2$.
\end{itemize}
\end{theorem}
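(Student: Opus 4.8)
The plan is to argue by contradiction and, exactly as announced in the introduction, to reduce everything to the non-existence of positive scalar curvature metrics on closed $n$-manifolds obtained from the torus $\mathbb{T}^n$ by a generalized surgery along an incompressible sub-$\mathbb{T}^k$ (here $k=1$ for the ALF cases and $k=2$ for the ALG cases); this latter statement is established separately. I will describe the argument for an ALF manifold of Schwarzschild type with homotopically non-trivial $\mathbb{S}^1$-factor; the ALG case is entirely parallel after replacing $\mathbb{S}^1$ by $\mathbb{T}^2$ and using the hypothesis that $\pi_1(\mathbb{T}^2)\to\pi_1(M)$ is injective, while in the conical cases the hypothesis that the cone angle exceeds $2\pi$ plays, through Gauss--Bonnet, precisely the role that negative mass plays below.

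Step one is to reduce to an end that is \emph{exactly} a flat cylinder. Assume $R_g\geq 0$ but $m<0$. Outside a compact set the metric has the form $g=g_{\mathrm{Sch}}(m)+l^2\,\mathrm d\theta^2+\sigma$, where $g_{\mathrm{Sch}}(m)$ denotes the $(n-1)$-dimensional spatial Schwarzschild metric of mass $m$, i.e.\ an asymptotically flat metric on $\mathbb{R}^{n-1}-\mathbb{B}$ of \emph{negative} ADM mass. I would first use the fast decay $\mu>2n-6$ to deform $\sigma$ to $0$ near infinity: this perturbs the scalar curvature only by a term of order $O(r^{-\mu-2})$, which, thanks to $\mu>2n-6$, is small enough to be absorbed by a conformal change supported in the end, with negligible effect on the mass of the base. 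What remains is the exact product $g'_{\mathrm{base}}+l^2\,\mathrm d\theta^2$ with $g'_{\mathrm{base}}$ asymptotically flat of negative mass on $\mathbb{R}^{n-1}-\mathbb{B}$. Applying Lohkamp's deformation (in the spirit of Schoen--Yau) to $g'_{\mathrm{base}}$ yields a metric on $\mathbb{R}^{n-1}$ that agrees with the Euclidean metric outside a compact set, has non-negative scalar curvature, and has positive scalar curvature somewhere; since the $\mathbb{S}^1$ direction is a flat product factor, the scalar curvature of the product equals that of the base, so we obtain a metric $\bar g$ on $M$ coinciding with the flat product $(\mathbb{R}^{n-1}-\mathbb{B}_R)\times\mathbb{S}^1(l)$ outside a compact set, with $R_{\bar g}\geq 0$ everywhere and $R_{\bar g}>0$ somewhere. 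The analogous reductions in the ALG and conical cases are exactly what force the decay thresholds $\mu>2n-8$ and $\mu>1$ in the theorem.

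Step two is a compactification. Choose $R$ so that $\bar g$ equals the flat product on $\{r\geq R\}$, and let $M_R=\{r\leq R+1\}$, a compact manifold with a flat product collar. For $L$ large the Euclidean ball $\mathbb{B}^{n-1}_{R+1}$ embeds isometrically in the flat torus $\mathbb{T}^{n-1}_L=\mathbb{R}^{n-1}/(L\mathbb{Z})^{n-1}$, so $W=(\mathbb{T}^{n-1}_L-\mathbb{B}^{n-1}_R)\times\mathbb{S}^1(l)$ has a collar matching that of $M_R$; gluing along these flat collars produces a closed Riemannian $n$-manifold $(\hat M,\hat g)$ with $R_{\hat g}\geq 0$ and $R_{\hat g}>0$ somewhere (the positivity bump produced in step one sits inside $M_R$). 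Since $W$ is $\mathbb{T}^n=\mathbb{T}^{n-1}_L\times\mathbb{S}^1(l)$ with a tubular neighbourhood of the circle $\{0\}\times\mathbb{S}^1$ deleted, $\hat M$ is precisely a generalized surgery on $\mathbb{T}^n$ along that circle. Now the incompressibility hypothesis enters: the model $\mathbb{S}^1$-factor is $\pi_1$-injective in $M$, hence in $M_R\simeq M$; for $n\geq 4$ the gluing locus $\mathbb{S}^{n-2}\times\mathbb{S}^1$ has fundamental group $\mathbb{Z}=\pi_1(\mathbb{S}^1)$, which injects into both $\pi_1(M_R)$ and $\pi_1(W)$, so van Kampen's theorem presents $\pi_1(\hat M)$ as an amalgamated product over $\mathbb{Z}$ with both factors injecting, and the surgery circle remains incompressible in $\hat M$ (the low-dimensional ALG cases require only minor modifications of this bookkeeping). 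Hence $\hat M$ lies in the class for which no positive scalar curvature metric exists, and, using $n\leq 7$, this contradicts the existence of $\hat g$ (after a Kazdan--Warner deformation one would even obtain $R_{\hat g}>0$). Therefore $m\geq 0$; the rigidity statement follows by the standard variational argument: since the mass is non-negative for every admissible metric, $m=0$ is a minimum along every compactly supported variation, the first variation of the mass then forces $\mathrm{Ric}_g\equiv 0$, and a Ricci-flat ALF or ALG manifold of Schwarzschild or conical type with vanishing mass must split off the stated flat model.

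I expect the main difficulty to be step one: performing the deformation so that non-negative scalar curvature is preserved throughout, the end is brought to the \emph{exact} flat product (so that it can be glued to the torus), and the mass estimates survive, all under the sharp decay hypotheses. The conceptual crux is the identification in step three: isolating exactly the incompressibility condition that makes the compactified manifold a surgered torus admitting no positive scalar curvature, and verifying via van Kampen that this condition is inherited by $\hat M$.
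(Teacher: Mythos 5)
Your step two (compactification by gluing to a complement in a large flat torus) and step three (reduction to non-existence of PSC metrics on the surgered torus, with the incompressibility propagating to the compactified manifold) are in the same spirit as the paper's argument; the topological bookkeeping there needs more care than a raw van Kampen argument---for ALF with $n=3$ and the ALG cases the gluing locus is $\mathbb T^2$ or $\mathbb S^{n-3}\times\mathbb T^2$ rather than a circle cross a simply-connected sphere, and the paper devotes an appendix (Proposition~\ref{Prop: same homotopy local and global}) to showing that the relevant curve or torus stays non-trivial in $(\mathbb T^n,i)\sharp(M_2,\phi_2)$---but these are repairable details.

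The genuine gap is step one, and it is precisely the obstacle the paper singles out in the introduction. You propose to first remove the error term $\sigma$ near infinity, ``absorb'' the resulting scalar-curvature defect ``by a conformal change supported in the end,'' and then apply a Lohkamp-type deformation to the \emph{base} to bring the end to the exact flat product $(\mathbb R^{n-1}-\mathbb B_R)\times\mathbb S^1(l)$. But a conformal factor multiplies the whole metric, fiber included: $\phi^{4/(n-2)}\bigl(g_{\mathrm{Sch}}+l^2\,\mathrm d\theta^2\bigr)$ is no longer a Riemannian product of a base metric with a circle of fixed length, so after the conformal change there is no ``$g'_{\mathrm{base}}$'' on $\mathbb R^{n-1}$ to which one could apply Lohkamp. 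This is exactly the point the paper flags: conformal deformation does \emph{not} preserve the ALF/ALG Schwarzschild or conical structure, unlike the AF case. The paper's way around this is a genuinely different construction: it solves the (degenerate-)parabolic quasi-spherical equation~\eqref{PSCExtensionQSeq} on an annulus $\Omega_{r_0+\lambda}\setminus\Omega_{r_0}$ to produce a metric $u^2\,\mathrm dr^2+\gamma_r$ with vanishing scalar curvature; this only modifies the lapse $u$ in the radial direction and so keeps the cross-sections as products $\mathbb S^{n-2}(\cdot)\times\mathbb S^1(l)$ (respectively $\times\mathbb T^2$). The sign condition $m<0$ (or $\beta>1$) then produces a favorable mean-curvature jump at the outer boundary, and Miao's corner mollification yields the PSC metric on the compactified manifold. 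Without this (or a substitute that keeps the $\mathbb T^k$-factor intact), your step one does not go through. Your rigidity sketch (``first variation of the mass forces $\mathrm{Ric}\equiv 0$'') is also different from the paper's route---which first shows $R\equiv 0$ by a Dirichlet-problem argument and then uses a Ricci-flow step (Lemma~\ref{RicciflowkeepALF}) that preserves the ALF/ALG structure before invoking Minerbe's classification---and you would again have to verify that the admissible perturbations keep the mass well-defined within the ALF/ALG class; but that is a secondary concern compared to the breakdown at step one.
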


{ About the proof the basic idea is to reduce above theorem to non-existence results of positive scalar curvature (PSC) metrics on certain closed manifolds.} Compared to the AF case, conformal deformation of metrics does not preserve ALF and ALG manifolds of Schwarzschild or conical type this time due to the $\mathbb{T}^k$-factor at the infinity of ALF and ALG manifolds and we have to work out a new way to overcome this difficulty. { Based on quasi-spherical metrics (see \cite{Bartnik1993} and \cite{ST2002}) and gluing method, we can always modify the original manifold to a piecewise smooth one with PSC in the distribution sense (see \cite{Miao2002}), whose infinity is isometric to that of standard product $\mathbb R^{n-k}\times \mathbb T^k$, under the assumption that the mass is negative or that the cone angle is greater than $2\pi$. After gluing opposite faces of a large cube, we reduce our goal to rule out the possibility of any PSC metric on closed manifolds coming from general surgery to $\mathbb T^n$ along the tube neighborhood of $\mathbb T^k$ (refer to Section \ref{Sec: general surgery} for more details).

Incompressible conditions are very common as a known obstacle for PSC metrics. It follows directly from the Gauss-Bonnet formula and the classification of closed surfaces that an orientable closed surface admits no PSC metric if it has a non-contractible closed curve. In dimension three, Schoen and Yau \cite{SY1979b} showed that an orientable closed $3$-manifold $M$ with an immersed $2$-torus $\Sigma$ such that $i_*:\pi_1(\Sigma)\to \pi_1(M)$ is injective cannot admit any PSC metric. Similar incompressible conditions are adapted in our investigation on PSC metrics and general surgeries to $n$-torus. With the notation in Section \ref{Sec: general surgery}, we can show
\begin{theorem}\label{Thm: main 2.1}
Denote $\gamma$ to be a closed curve in $\partial \Omega$ such that $\gamma$ is homotopic to $\mathbb S^1_n$ in $\mathbb T^n-\Omega$. If the curve $\phi_2(\gamma)$ is homotopically non-trivial in $M_2-\phi_2(\Omega)$ for some $(M_2,\phi_2)$ in $\mathcal C_\Omega$,
then $(\mathbb T^n,i)\sharp (M_2,\phi_2)$ admits no PSC metric for $n\leq 7$. Moreover, if $g$ is a smooth metric on $(\mathbb T^n,i)\sharp (M_2,\phi_2)$ with non-negative scalar curvature, then it is flat.
\end{theorem}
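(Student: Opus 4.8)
The plan is to deduce Theorem~\ref{Thm: main 2.1} from the classical non-existence (and rigidity) of PSC metrics on the $n$-torus, by running the Schoen--Yau descent through minimal hypersurfaces; the bound $n\le 7$ is precisely what makes every minimizing hypersurface produced along the way smooth. Write $N:=(\mathbb T^n,i)\sharp(M_2,\phi_2)=X\cup_\Sigma Y$ with $X:=\mathbb T^n-\Omega$, $Y:=M_2-\phi_2(\Omega)$ and $\Sigma:=\partial\Omega$. Since $\Omega$ is a tubular neighborhood of $\mathbb T^k\subset\mathbb T^n$ with $k\in\{1,2\}$, one may identify $X\cong\mathbb T^k\times(\mathbb T^{n-k}-\mathbb B)$ and $\Sigma\cong\mathbb T^k\times\mathbb S^{n-k-1}$, and $\gamma\subset\Sigma$ is the given curve, homotopic to $\mathbb S^1_n$ inside $X$ and with $\phi_2(\gamma)$ essential in $Y$.

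\emph{Step 1: the incompressibility propagates to $N$.} The first step is purely topological: $\pi_1(X)\cong\mathbb Z^k\times\pi_1(\mathbb T^{n-k}-\mathbb B)$, the class $[\gamma]$ generates the $\mathbb Z^k$-factor, and (away from the exceptional dimension $n=k+2$, where $\pi_1(\Sigma)$ carries an extra generator and which must be handled separately) the inclusion $\pi_1(\Sigma)\hookrightarrow\pi_1(X)$ admits a retraction $r$ onto that $\mathbb Z^k$-factor. Van Kampen presents $\pi_1(N)$ as the pushout of $\pi_1(X)\leftarrow\pi_1(\Sigma)\to\pi_1(Y)$, so the compatible pair $(\phi_{2*}\circ r,\ \mathrm{id}_{\pi_1(Y)})$ induces a homomorphism $\rho\colon\pi_1(N)\to\pi_1(Y)$ with $\rho([\gamma])=\phi_{2*}[\gamma]\ne 1$; hence $\gamma$ — equivalently a parallel copy $\gamma'\subset X$ — stays essential in $\pi_1(N)$.

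\emph{Step 2: a map to a torus, cohomology classes, descent, and rigidity.} Using the torus directions of $X$ transverse to $\gamma$ I would build a smooth map $F\colon N\to\mathbb T^{n-1}$ whose restriction to $X$ is homotopic to the corresponding projection; in the ALF case $k=1$ the extension over $Y$ is automatic, since $F|_\Sigma$ then factors up to homotopy through the contractible normal disk of $\mathbb S^1_n$ and so is null-homotopic in $\mathbb T^{n-1}$, while in the ALG case $k=2$ one uses Step 1 more carefully. Pulling back a basis of $H^1(\mathbb T^{n-1};\mathbb Z)$ yields classes $\alpha_1,\dots,\alpha_{n-1}\in H^1(N;\mathbb Z)$ whose cup product is Poincar\'e dual to $[\gamma']\in H_1(N;\mathbb Z)$, and one checks that this product is nonzero. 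The descent is then routine: if $R_g>0$, realize $\mathrm{PD}(\alpha_1)$ by a two-sided area-minimizing hypersurface, conformally equip it with a PSC metric using its stability and $R_g>0$, note that $\alpha_2|_\cdot\smile\cdots\smile\alpha_{n-1}|_\cdot\ne 0$ survives, and iterate down to a closed surface with nonzero $H^1$ — impossible by Gauss--Bonnet; this gives the first assertion. For the rigidity statement, if $R_g\ge 0$ but $g$ is not flat then either $R_g\not\equiv 0$, in which case a conformal change produces a PSC metric contradicting what was just shown, or $R_g\equiv 0$, in which case the equality case of the same descent (each slice totally geodesic and scalar-flat, the ambient curvature term vanishing) forces $g$ Ricci-flat, whence $g$ is flat by the Cheeger--Gromoll splitting theorem (as $b_1(N)\ge n-1$ and there is no closed simply connected Ricci-flat $1$-manifold), again a contradiction.

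\emph{The main obstacle.} The genuinely delicate point is the passage from the \emph{homotopical} hypothesis ``$\phi_2(\gamma)$ homotopically non-trivial in $Y$'' to the \emph{homological} input used in Step 2: a priori $\phi_{2*}[\gamma]$ may be non-trivial in $\pi_1(Y)$ yet vanish in $H_1(Y;\mathbb Q)$, and then the cup product $\alpha_1\smile\cdots\smile\alpha_{n-1}$ can be zero, $F$ can have zero degree onto its image, and the cohomological slicing collapses. I would handle this in the spirit of Schoen--Yau's treatment of incompressible tori in dimension three (\cite{SY1979b}): rather than minimizing the offending hypersurface in a homology class, at the stage where the argument would fail one minimizes area within the \emph{free homotopy class} of (a circle or torus carried by) $\gamma$, using the $\pi_1$-injectivity established in Step 1 to produce a smooth stable minimal representative, and then runs the second-variation/Gauss--Bonnet argument there; alternatively one passes to a covering of $N$ corresponding to a subgroup in which $\gamma$ becomes homologically essential and carries out the argument equivariantly on a compact exhaustion. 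Making this homotopy-theoretic bookkeeping compatible with the minimal slicing at \emph{every} intermediate dimension, and dealing with the low-dimensional cases $n=k+2$ (and, for ALG, with the second $\mathbb T^2$-generator), is where the real work lies.
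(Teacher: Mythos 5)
You correctly identify the central difficulty of the theorem: the hypothesis on $\gamma$ is purely \emph{homotopical}, so the homology class $[\gamma']\in H_1(N;\mathbf Z)$ may well vanish (the paper's example \eqref{Eq: homotopy non-trivial example}, where $\gamma$ bounds rationally in a punctured genus-$h$ surface, shows exactly this), and then the $(n-1)$-fold cup product in your Step~2 is zero and the Schoen--Yau slicing it would drive collapses. But the proposal stops there: the two suggested repairs (area-minimize in a free homotopy class, or lift to a cover and argue equivariantly on a compact exhaustion) are not carried out, and neither is obviously viable in the generality needed — in particular, nothing in the free-homotopy route tells you how to propagate PSC through $n-2$ rounds of minimal slicing, nor why the exhaustion argument in the possibly non-compact cover closes up. So there is a genuine gap precisely at the point you flag as ``the real work.''

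What the paper actually does is stop one step earlier and switch from homology to homotopy at dimension~$2$. Only the $n-2$ classes $\alpha_1,\dots,\alpha_{n-2}$ (Poincar\'e dual to codimension-one subtori of $\mathbb T^n-\Omega$ containing $\mathbb S^1_n$) are used; their cap product with $[M]$ is the 2-class $\sigma_2=[\mathbb S^1_{n-1}\times\mathbb S^1_n]$, which is \emph{robustly} nonzero by intersection with $\mathbb S^1_1\times\cdots\times\mathbb S^1_{n-2}$ and does not depend on $[\gamma]$ surviving in homology. By the \emph{first} conclusion of Theorem~\ref{Thm: SYS}, under a PSC metric $\sigma_2$ is represented by a union of smooth $2$-spheres $\Sigma$. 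One then intersects $\Sigma$ transversally with the $(n-1)$-torus $S=\mathbb S^1_1\times\cdots\times\mathbb S^1_{n-2}\times\mathbb S^1_n$: the intersection is a union of closed curves $c_i$, each lying on a sphere component of $\Sigma$ and hence null-homotopic in $M$; yet the sum $[\Sigma\cap S]$ equals $[\mathbb S^1_n]$ in $H_1(S;\mathbf Z)$, and since $S$ is aspherical with abelian $\pi_1$ the Hurewicz map is an isomorphism, so the concatenation of the $c_i$ is freely homotopic to $\mathbb S^1_n$ in $S$. This forces $\mathbb S^1_n\simeq\gamma$ to be null-homotopic in $M$, contradicting the incompressibility (your Step~1 / the paper's Proposition~\ref{Prop: same homotopy local and global}). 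This ``intersect the sphere class with a subtorus'' device is the missing ingredient of the proposal. (For the rigidity part your outline matches the paper in spirit — deform to PSC or invoke Ricci-flat plus large $b_1$ — though the paper needs and proves only $b_1\ge n-2$, not $n-1$, which is consistent with the fact that $\alpha_{n-1}$ need not exist.)
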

\begin{theorem}\label{Thm: main 2.2}
Denote $\Sigma$ to be a $2$-torus in $\partial \Omega$ such that $\Sigma$ is homotopic to $\mathbb S^1_{n-1}\times\mathbb S^1_n$ in $\mathbb T^n-\Omega$. If the inclusion map
$$i_*:\pi_1(\phi_2(\Sigma))\to \pi_1(M_2-\phi_2(\Omega))$$
 is injective for some $(M_2,\phi_2)$ in $\mathcal C_\Omega$,
then $(\mathbb T^n,i)\sharp (M_2,\phi_2)$ admits no PSC metric for $n\leq 7$. Moreover, if $g$ is a smooth metric on $(\mathbb T^n,i)\sharp (M_2,\phi_2)$ with non-negative scalar curvature, then it is flat.
\end{theorem}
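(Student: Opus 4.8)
\emph{Proof idea.} The plan is to rule out positive scalar curvature on $N:=(\mathbb T^n,i)\sharp(M_2,\phi_2)=(\mathbb T^n-\Omega)\cup_{\partial\Omega}(M_2-\phi_2(\Omega))$ by a Schoen--Yau minimal hypersurface descent, reducing in $n-3$ steps to the three-dimensional incompressible-torus theorem of \cite{SY1979b}; the rigidity statement then follows by tracking the equality case through the induction. I first check that incompressibility passes to $N$. On the standard model from Section~\ref{Sec: general surgery} one has $\partial\Omega\cong\mathbb S^{n-3}\times\mathbb T^2$, and the $2$-torus $\Sigma\subset\partial\Omega$ induces an isomorphism $\pi_1(\Sigma)\cong\pi_1(\partial\Omega)\cong\mathbb Z^2$; thus the hypothesis on $\pi_1(\phi_2(\Sigma))$ says precisely that the edge inclusion $\pi_1(\partial\Omega)\hookrightarrow\pi_1(M_2-\phi_2(\Omega))$ is injective, while $\pi_1(\partial\Omega)\hookrightarrow\pi_1(\mathbb T^n-\Omega)$ is injective directly from the model. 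Seifert--van Kampen then exhibits $\pi_1(N)$ as the amalgamated free product $\pi_1(\mathbb T^n-\Omega)\ast_{\pi_1(\partial\Omega)}\pi_1(M_2-\phi_2(\Omega))$, into which both factors embed; hence $\Sigma':=\phi_2(\Sigma)$ is an incompressible $2$-torus in $N$.

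\emph{Setting up and running the descent.} Pulling back $\mathrm dx_1,\dots,\mathrm dx_{n-3}$ from the $\mathbb T^n$-side and extending them across $M_2-\phi_2(\Omega)$ by forms supported away from $\partial\Omega$ (possible since each $\mathrm dx_j$ restricts to an exact form on $\partial\Omega$), I obtain classes $\alpha_1,\dots,\alpha_{n-3}\in H^1(N;\mathbb Z)$ whose cup product is non-zero in $H^{n-3}(N;\mathbb Z)$: its Poincar\'e dual is represented by a generic coordinate $3$-subtorus of $\mathbb T^n$ avoiding $\Omega$, and nontriviality of this class in $H_3(N;\mathbb Z)$ is read off from the Mayer--Vietoris sequence of the decomposition (the boundary $\mathbb S^{n-3}\times\mathbb T^2$ contributes nothing in the relevant degree for $n\le 7$). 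Because $\Sigma'$ is homotopic to $\mathbb S^1_{n-1}\times\mathbb S^1_n$ in $\mathbb T^n-\Omega$, each $\alpha_j$ restricts trivially to $\Sigma'$. Now assume $g$ has $R_g>0$ and run the Schoen--Yau argument $n-3$ times, keeping at stage $k$: a closed oriented $(n-k+1)$-manifold $X_{k-1}$ of positive scalar curvature, the restricted classes $\alpha_k,\dots,\alpha_{n-3}$ with Poincar\'e dual of their product non-zero in $H_3(X_{k-1};\mathbb Z)$, and an incompressible $2$-torus $\Sigma_{k-1}\subset X_{k-1}$ on which all $\alpha_j$ vanish. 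At each step I minimize area in the class $PD(\alpha_k)\in H_{n-k}(X_{k-1};\mathbb Z)$ to obtain a non-empty, smooth (regularity of homologically area-minimizing hypersurfaces holds since $\dim X_{k-1}\le 7$), two-sided stable minimal hypersurface $X_k$, and the Schoen--Yau conformal change of its induced metric yields positive scalar curvature on $X_k$; functoriality of Poincar\'e duality under $X_k\hookrightarrow X_{k-1}$ keeps the cohomological data non-degenerate, and the torus is propagated as discussed below. After $n-3$ steps $X_{n-3}$ is a closed oriented $3$-manifold of positive scalar curvature containing an incompressible $2$-torus, contradicting \cite{SY1979b}; hence $N$ admits no metric of positive scalar curvature.

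\emph{Rigidity.} Let $g$ have $R_g\ge 0$ and suppose $g$ is not flat. If $R_g\not\equiv 0$, the first eigenvalue of the conformal Laplacian is positive, so $g$ is conformal to a metric of positive scalar curvature, contradicting the above; thus $R_g\equiv 0$. If $g$ were not Ricci flat, the standard scalar-curvature deformation (Kazdan--Warner, Bourguignon) would again produce positive scalar curvature; thus $\Ric_g\equiv 0$. Rerunning the descent with $R_g\equiv 0$, the stability inequality is forced to equality: each $X_k$ is totally geodesic with $\Ric_g(\nu,\nu)\equiv0$ along it, the first Jacobi eigenfunction is constant, and $X_k$ is again scalar flat carrying an incompressible torus, hence (by the same argument) admits no positive scalar curvature metric and is therefore Ricci flat; the bottom $X_{n-3}$ is a scalar-flat closed $3$-manifold with no positive scalar curvature metric, hence flat. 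A standard argument then shows $g$ is flat in a neighbourhood of each $X_k$, and real-analyticity of Ricci-flat metrics propagates flatness globally, so $g$ is flat.

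\emph{The main obstacle.} I expect the crux to be the propagation of the incompressible torus through each codimension-one reduction, i.e.\ that $\Sigma_{k-1}$ can be homotoped into the area-minimizing hypersurface $X_k$ while staying $\pi_1$-injective. Since all surviving classes $\alpha_j$ vanish on $\Sigma_{k-1}$, this torus lifts to the infinite cyclic cover of $X_{k-1}$ dual to $\alpha_k$, and one wants a cut-and-homotope argument --- equivalently, Bass--Serre theory applied to the splitting of $\pi_1(X_{k-1})$ along the incompressible hypersurface $\pi_1(X_k)$ --- to push it into a single copy of $X_k$; then $\pi_1$-injectivity of $X_k\hookrightarrow X_{k-1}$ yields incompressibility of the resulting torus in $X_k$. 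The regularity input when $\dim X_{k-1}=7$ and the propagation of flatness in the rigidity step are the other delicate points.
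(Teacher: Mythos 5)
Your plan diverges from the paper's proof at the key step, and the divergence exposes a gap that the paper's argument is specifically designed to avoid. You propose to carry an incompressible $2$-torus through each of the $n-3$ Schoen--Yau reductions, arriving at a closed scalar-positive $3$-manifold with an incompressible $2$-torus and then invoking \cite{SY1979b}. You yourself flag the torus propagation as "the crux," but this is not a technical delicacy to be filled in later: there is no reason that the homologically area-minimizing hypersurface $X_k \subset X_{k-1}$ is $\pi_1$-injective, so the Bass--Serre/cut-and-homotope mechanism you invoke is not available, and even if $\Sigma_{k-1}$ could be homotoped into $X_k$ there is no mechanism guaranteeing it remains $\pi_1$-injective in $X_k$ (nor that $\pi_1$-injectivity survives the conformal deformation that produces the PSC metric on $X_k$). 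The paper circumvents this entirely by a different route: it runs the dimension reduction only down to a $3$-dimensional submanifold $\Sigma_3$ with PSC in the class $[\mathbb S^1_{n-2}\times\mathbb S^1_{n-1}\times\mathbb S^1_n]$, then intersects $\Sigma_3$ transversally with the fixed $(n-1)$-torus $S=\mathbb S^1_1\times\cdots\times\mathbb S^1_{n-3}\times\mathbb S^1_{n-1}\times\mathbb S^1_n$, and derives a contradiction purely inside $S$: using the Meeks--Simon--Yau isotopy minimization (or the systole estimate in the small-area case) to cut the intersection surface $\Sigma_2$ along curves null-homotopic in $\Sigma_3$, it shows via intersection numbers and the incompressibility of $\mathbb S^1_{n-1}\times\mathbb S^1_n$ in $M$ that each such curve is null-homotopic in $S$, so $\Sigma_2$ is homologous to a sum of $2$-spheres in $S$; since $\pi_2(S)=0$, this class vanishes, contradicting $[\Sigma_2]=[\mathbb S^1_{n-1}\times\mathbb S^1_n]\neq 0$ in $H_2(S,\mathbf Z)$. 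This intersection-with-a-fixed-torus device is precisely the idea that replaces your problematic torus propagation.

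Two smaller issues. First, your Seifert--van Kampen setup asserts $\pi_1(\Sigma)\cong\pi_1(\partial\Omega)\cong\mathbb Z^2$; this fails at $n=4$, where $\partial\Omega\cong\mathbb S^1\times\mathbb T^2\cong\mathbb T^3$ has fundamental group $\mathbb Z^3$, so the amalgamation you describe is not literally correct across the stated dimension range. The paper instead proves incompressibility in the glued manifold by an explicit covering-space construction (its Appendix, Proposition A.1), which handles all $n$ uniformly. Second, your rigidity argument ("real-analyticity of Ricci-flat metrics propagates flatness globally," "scalar-flat closed $3$-manifold with no PSC metric, hence flat") would need substantially more justification; the paper's rigidity step is cleaner: non-existence of PSC plus a Ricci-flow deformation forces Ricci-flatness, and then the non-vanishing of $[M]\frown(\alpha_1\smile\cdots\smile\alpha_{n-3})$ gives $b_1\geq n-3$, so Cheeger--Gromoll splitting forces flatness.
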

Roughly speaking, the incompressible conditions forbid the existence of submanifolds with positive Yamabe invariant coming from Schoen-Yau's dimension reduction argument. To show this we need to maximize the use of intersection structure of the underlying manifolds, which turns out to be a key feature of our present work.
} Relationships between positive scalar curvature and general surgeries were first researched in \cite{SY1979} as well as \cite{GL80}. From this aspect, Theorem \ref{Thm: main 2.1} and \ref{Thm: main 2.2} have their own interest.

Another motivation to consider the positive mass theorems on ALF and ALG manifolds is the fill-in problem of non-negative scalar curvature raised by M. Gromov (see \cite{gromov2020lectures}). { Given an orientable closed Riemannian manifold $(\Sigma,\gamma)$, an {\it admissible fill-in} of $(\Sigma,\gamma)$ is a compact orientable Riemannian manifold $(\Omega,g)$ with non-negative scalar curvature and mean convex boundary such that the boundary is isometric to $(\Sigma,\gamma)$. The supremum total mean curvature of $(\Sigma,\gamma)$ with respect to admissible fill-ins is defined to be
$$
\Lambda_+(\Sigma,\gamma)=\sup\left\{\int_{\partial\Omega}H_{\partial\Omega}\,\mathrm d\sigma_g:\text{ $(\Omega,g)$ is an admissible fill-in of $(\Sigma,\gamma)$}\right\},
$$
where $H_{\partial\Omega}$ is the mean curvature of boundary $\partial\Omega$ with respect to the outer unit normal and $\mathrm d\sigma_g$ is the area element of $\partial\Omega$.
Some discussions on the quantity $\Lambda_+$ have been made in \cite{shi2020total} and \cite{yuguang2019on} when $\Sigma$ is an $n$-sphere. As applications of Theorem \ref{Thm: main 2.1} and \ref{Thm: main 2.2}, we make a further investigate on fill-in problems when $\Sigma$ is diffeomorphic to $\mathbb S^p\times \mathbb S^1$ or $\mathbb S^p\times \mathbb T^2$. As a result, we can obtain the following theorems.
\begin{theorem}\label{Thm: fill-in 1}
Let $\Sigma_0$ be a convex hypersurface or curve in the Euclidean space $\mathbb R^n$ with total mean curvature $T_0$. If $(\Omega,g)$ is an admissible fill-in of the product manifold $\Sigma_0\times \mathbb S^1(l)$ such that the circle component is homotopically non-trivial in $\Omega$, then for $n\leq 6$ it holds
\begin{equation}\label{Eq: total mean curvature 1}
\int_{\partial\Omega}H_{\partial\Omega}\,\mathrm d\sigma_g\leq 2\pi l T_0,
\end{equation}
where $H_{\partial\Omega}$ is the mean curvature of $\partial\Omega$ with respect to the unit outer normal and $\mathrm d\sigma_g$ is the area element of $\partial\Omega$ with the induced metric. If $(\Omega,g)$ is an admissible fill-in of $\Sigma_0\times \mathbb S^1(l)$ with the equality in \eqref{Eq: total mean curvature 1}, then $(\Omega,g)$ is static with vanishing scalar curvature.
\end{theorem}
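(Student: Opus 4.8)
The plan is to cap off the fill-in with a scalar-flat end of negative mass and invoke the positive mass theorem (Theorem~\ref{ALFPMT0}, which itself reduces to Theorem~\ref{Thm: main 2.1}). Suppose for contradiction that $(\Omega,g)$ is an admissible fill-in of $\Sigma_0\times\mathbb S^1(l)$ with $\int_{\partial\Omega}H_{\partial\Omega}\,\mathrm d\sigma_g>2\pi l\,T_0$; one may take $\Omega$ connected. Write $\Sigma_0=\partial K$ for a bounded convex body $K\subset\mathbb R^n$ (the case of a convex curve, $n=2$, being identical). The exterior $\mathbb R^n\setminus K$ is foliated by the parallel convex hypersurfaces $\Sigma_s=\{x:\dist(x,K)=s\}$, $s\ge0$, so that the flat metric reads $\mathrm ds^2+g_s$, with $g_s$ the first fundamental form of $\Sigma_s$ and $H^0_s>0$ its mean curvature; set $H_{\Sigma_0}:=H^0_0$.

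First I would build on $E:=(\mathbb R^n\setminus K)\times\mathbb S^1(l)$ a quasi-spherical metric
$$h=u^2\,\mathrm ds^2+g_s+l^2\,\mathrm d\theta^2,$$
where the positive lapse $u=u(s,\cdot)$ solves the Shi--Tam parabolic equation enforcing $R_h\equiv0$ (see \cite{Bartnik1993}, \cite{ST2002}; the flat $\mathbb S^1$-factor enters the scalar curvature only as a spectator), with initial value $u|_{s=0}=H_{\Sigma_0}/H_{\partial\Omega}$, which is smooth and positive since $\partial\Omega$ is mean convex. Long-time solvability on $s\in[0,\infty)$ is precisely what convexity of $\Sigma_0$ secures. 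By construction the induced metric on $\partial E=\Sigma_0\times\mathbb S^1(l)$ is the product metric of $\partial\Omega$, and the mean curvature of $\partial E$ matches $H_{\partial\Omega}$, so that the corner satisfies the non-negativity condition of Miao's lemma. Gluing $E$ to the fill-in along this common boundary produces
$$N:=\Omega\cup_{\Sigma_0\times\mathbb S^1(l)}E,$$
a manifold with one ALF end. Because $R_g\ge0$, $R_h=0$ and the two boundary mean curvatures agree, Miao's corner lemma \cite{Miao2002} makes $N$ into a piecewise-smooth ALF manifold carrying a metric with non-negative scalar curvature in the distributional sense; moreover the circle $\{\mathrm{pt}\}\times\mathbb S^1(l)$ remains homotopically non-trivial in $N$ since it is so in $\Omega$.

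The quantitative heart is the Shi--Tam monotonicity for the Brown--York-type quantity
$$\mathfrak m(s)=c\int_{\Sigma_s\times\mathbb S^1}\bigl(H^{0}_{s}-H^{h}_{s}\bigr)\,\mathrm d\sigma,$$
which is non-increasing in $s$ and converges, as $s\to\infty$, to a positive multiple of the mass of $(E,h)$. Since
$$\mathfrak m(0)=c\Bigl(2\pi l\,T_0-\int_{\partial\Omega}H_{\partial\Omega}\,\mathrm d\sigma_g\Bigr)<0,$$
the mass of $(N,h)$ is negative, contradicting Theorem~\ref{ALFPMT0}; concretely, one deforms $N$ to be exactly flat outside a large set, removes a large cube from the flat chart and identifies opposite faces, obtaining a closed manifold produced by general surgery on $\mathbb T^{n+1}$ along the tube of the incompressible $\mathbb S^1$-factor, on which Theorem~\ref{Thm: main 2.1} forbids positive scalar curvature. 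For the rigidity claim, equality in \eqref{Eq: total mean curvature 1} forces $\mathfrak m(0)=0$, so the mass of $(N,h)$ is $\le0$ and hence, by Theorem~\ref{ALFPMT0}, equals $0$ with $(N,h)$ flat; the equality case of the monotonicity then gives $u\equiv1$, so $(E,h)$ is the flat exterior and $H_{\partial\Omega}=H_{\Sigma_0}$ pointwise, and differentiating the mass in the boundary data produces a positive static potential on $(\Omega,g)$, making it static with $R_g\equiv0$.

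The main obstacle is the asymptotic analysis of $(E,h)$ used above: one must verify that solving the Shi--Tam equation over the exterior of a convex body, whose parallel leaves $\Sigma_s$ are only asymptotically round, yields a metric that is genuinely ALF of Schwarzschild type with decay order $\mu$ exceeding the threshold $2(n+1)-6$ demanded by Theorem~\ref{ALFPMT0} for a manifold of dimension $n+1\le7$, and that $\lim_{s\to\infty}\mathfrak m(s)$ equals the corresponding mass. This rests on controlling the rate at which the parallel hypersurfaces of $K$ round out together with the matching expansion of the lapse $u$; if the direct construction does not decay fast enough, one instead runs the quasi-spherical flow only on a finite collar and then glues on an exact Schwarzschild-type model end carrying the (negative) mass dictated by $\mathfrak m$, once more through the corner lemma.
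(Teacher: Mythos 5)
Your overall strategy is the same as the paper's: attach a scalar-flat Shi--Tam quasi-spherical collar $h=u^2\mathrm ds^2+g_s+l^2\mathrm d\theta^2$ to the fill-in, invoke Miao's corner lemma, and reduce to the closed-manifold statement Theorem \ref{Thm: main 2.1} by flattening the end and identifying opposite faces of a large cube. But there is a real gap you do not address: on the product exterior $(\mathbb R^n\setminus K)\times\mathbb S^1(l)$ the quasi-spherical equation is a \emph{degenerate} parabolic equation. Although the $\mathbb S^1$-factor is a spectator in $R_{\bar\gamma_t}$ and in $\bar H_t$, it is \emph{not} a spectator in the Laplacian $\Delta_{\bar\gamma_t}$; as $t\to\infty$ the leaf metric stretches in the $\Sigma_0$-directions and the ratio of ellipticity constants blows up, so the standard Shi--Tam arguments for long-time \emph{asymptotics} (convergence of $u$ to a constant, identification of $\lim\mathfrak m(s)$ with a mass) do not carry over directly. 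This is exactly the difficulty the paper flags explicitly and resolves by the parabolic rescaling $\Psi_k:(\theta_1,\theta_2,\tau)\mapsto(\theta_1,k\theta_2,(\tau+1)k)$, which lifts the solution through the $k$-fold covering of the circle factor to restore uniform parabolicity and obtain convergence of $u$ (or, for $n\ge3$, of $v=t^{n-2}(u-1)$) by a compactness/strong-maximum-principle argument. Your proof offers no substitute for this step.

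Moreover, the paper does not actually pass through the ALF mass of $(E,h)$: once $u(\cdot,t)\to c$ with $c<1$ (equivalently $v\to$ negative constant for $n\ge3$), one fixes a finite $t$ where $u<1$, so the mean curvature of $\Sigma_t\times\mathbb S^1$ from inside exceeds the flat exterior one, and glues directly to the flat $(\mathbb E^n\setminus\mathbb B)\times\mathbb S^1$. This sidesteps entirely the need to verify that $(E,h)$ is ALF of Schwarzschild type with decay order $\mu>2(n+1)-6$, which you correctly identify as unresolved. Your fallback --- attaching an ``exact Schwarzschild-type model end carrying the mass dictated by $\mathfrak m$'' at finite $s$ --- would also not work as stated, because the induced metric $g_s+l^2\mathrm d\theta^2$ on the collar boundary is not round and so does not match the model boundary data, so the corner lemma would not apply. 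The finite-time gluing-to-flat step of the paper is the fix; but to execute it you still need the convergence statement, hence the lifting/rescaling argument you omitted.
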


\begin{theorem}\label{Thm: fill-in 2}
Let $\Sigma_0$ be a convex hypersurface or curve in the Euclidean space $\mathbb R^n$ with total mean curvature $T_0$. If $(\Omega,g)$ is an admissible fill-in of the product manifold $\Sigma_0\times \mathbb T^2$ with a flat $\mathbb T^2$ such that the inclusion map $i_*:\pi_1(\mathbb T^2)\to \pi_1(\Omega)$ is injective, then for $n\leq 5$ it holds
\begin{equation}\label{Eq: total mean curvature 2}
\int_{\partial\Omega}H_{\partial\Omega}\,\mathrm d\sigma_g\leq T_0 \area(\mathbb T^2),
\end{equation}
where $H_{\partial\Omega}$ is the mean curvature of $\partial\Omega$ with respect to the unit outer normal and $\mathrm d\sigma_g$ is the area element of $\partial\Omega$ with the induced metric. If $(\Omega,g)$ is an admissible fill-in of $\Sigma_0\times \mathbb T^2$ with the equality in \eqref{Eq: total mean curvature 2}, then $(\Omega,g)$ is static with vanishing scalar curvature.
\end{theorem}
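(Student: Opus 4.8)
The plan is to reduce the total-mean-curvature bound to the non-existence result of Theorem \ref{Thm: main 2.2}, exactly as the introduction suggests, via a doubling/gluing construction that produces an ALG manifold of Schwarzschild (or conical) type and then a closed manifold obtained by general surgery to $\mathbb T^n$. First I would argue by contradiction: suppose $(\Omega,g)$ is an admissible fill-in of $\Sigma_0\times\mathbb T^2$ (with $\Sigma_0\subset\mathbb R^n$ convex, $\Sigma_0\times\mathbb T^2$ of dimension $n+1\le 6$, so the ambient closed manifold will have dimension $\le 7$), with $i_*:\pi_1(\mathbb T^2)\to\pi_1(\Omega)$ injective and
$$
\int_{\partial\Omega}H_{\partial\Omega}\,\mathrm d\sigma_g> T_0\,\area(\mathbb T^2).
$$
The idea is to cap off $\Omega$ on the outside. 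Using the convexity of $\Sigma_0$ in $\mathbb R^n$, one knows $\Sigma_0$ bounds a convex body and its exterior carries the flat metric; the exterior region $(\mathbb R^n\setminus\{\text{convex body}\})\times\mathbb T^2$ is a flat manifold whose inner boundary is $\Sigma_0\times\mathbb T^2$ with mean curvature $H_0$, and whose total mean curvature there is exactly $T_0\,\area(\mathbb T^2)$ with the opposite orientation. One would glue $(\Omega,g)$ to this flat exterior along $\Sigma_0\times\mathbb T^2$. Since the glued boundary data has $H_{\partial\Omega}^{\mathrm{in}}+H^{\mathrm{out}}<0$ in the sense that the jump of mean curvature has the right sign — the inner piece is "too mean-convex" — Miao's result (\cite{Miao2002}) on corners gives a metric with non-negative scalar curvature in the distributional sense across the gluing hypersurface; then a standard mollification argument (as in \cite{Miao2002}, or the smoothing in \cite{ST2002}) smooths the corner while keeping scalar curvature $\ge 0$, indeed making it strictly positive somewhere near the corner because the mean-curvature defect is strict. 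Actually the cleaner route, matching the introduction's reference to quasi-spherical metrics, is to first deform $(\Omega,g)$ near its boundary so the boundary becomes round (i.e., isometric to a sphere cross flat $\mathbb T^2$) and the mean curvature is adjusted, then fill the outside with a quasi-spherical (Bartnik–Shi–Tam, \cite{Bartnik1993}, \cite{ST2002}) collar realizing prescribed boundary mean curvature while keeping $R\ge 0$, and finally attach the standard flat exterior; the strict inequality is what allows the quasi-spherical construction to close up. The upshot is a complete manifold $(\tilde M,\tilde g)$ with $R_{\tilde g}\ge 0$, not identically flat, whose end is exactly that of $\mathbb R^{n-1}\times\mathbb T^2$, i.e. an ALG manifold of Schwarzschild type with large decay order and with $i_*:\pi_1(\mathbb T^2)\to\pi_1(\tilde M)$ still injective (the injectivity passes through the gluing since we only attached simply-connected-in-the-relevant-sense flat pieces and the $\mathbb T^2$-factor survives).

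Next I would feed $(\tilde M,\tilde g)$ into the machinery behind Theorem \ref{ALFPMT0}: compactify the ALG end by gluing opposite faces of a large cube in the $\mathbb R^{n-2}$-directions to obtain a closed manifold of the form $(\mathbb T^n,i)\sharp(M_2,\phi_2)$ from general surgery to $\mathbb T^n$ along a tubular neighborhood of $\mathbb T^2$, carrying a metric with $R\ge 0$ and not flat (scalar curvature is strictly positive somewhere, inherited from the strict mean-curvature defect). The incompressibility hypothesis $i_*:\pi_1(\mathbb T^2)\to\pi_1(\Omega)$ injective translates — via the construction — into the hypothesis of Theorem \ref{Thm: main 2.2}, namely that $i_*:\pi_1(\phi_2(\Sigma))\to\pi_1(M_2-\phi_2(\Omega))$ is injective for the relevant $2$-torus $\Sigma$. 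Theorem \ref{Thm: main 2.2} then says such a closed manifold (dimension $\le 7$, so we need $\dim(\Sigma_0\times\mathbb T^2)=n+1\le 7$, i.e. $n\le 6$ — but the statement says $n\le 5$, so presumably the cube-compactification or a dimension-reduction step costs one more dimension; I would track this carefully) admits no PSC metric, and any non-negative scalar curvature metric is flat. This contradicts non-flatness, proving \eqref{Eq: total mean curvature 2}.

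For the rigidity statement — equality forces $(\Omega,g)$ static with vanishing scalar curvature — I would run the same construction in the equality case. Now the glued manifold $(\tilde M,\tilde g)$ has $R\ge 0$ and, by the equality part of Theorem \ref{Thm: main 2.2} (or rather of Theorem \ref{ALFPMT0}), must be flat, hence isometric to $\mathbb R^{n-2}\times\mathbb T^2$. Tracing back, the fill-in $(\Omega,g)$ sits isometrically inside flat $\mathbb R^{n-2}\times\mathbb T^2$ — but that is too strong as stated (the conclusion is only "static with vanishing scalar curvature", not flat), so the correct argument must instead be: if equality holds but one cannot close up with strict positivity, one shows the outward-filling quasi-spherical problem is exactly critically solvable, and criticality of the Bartnik–Shi–Tam / static-extension problem forces the boundary to admit a static vacuum extension, equivalently $(\Omega,g)$ is static with $R_g\equiv 0$ (the standard Bartnik-mass rigidity mechanism, compare \cite{yuguang2019on}). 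I would phrase this via the variational characterization: the boundary $\Sigma_0\times\mathbb T^2$ with the critical mean curvature is the "horizon-free critical data" and the Lagrange-multiplier equation for the constrained problem is precisely the static equation $\Hess_g f=f\Ric_g$, $\Delta_g f=0$, on $\Omega$.

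The main obstacle I expect is the gluing/smoothing step under only the \emph{strict} inequality: one must produce a genuinely non-flat complete manifold with $R\ge 0$ and a \emph{clean} product ALG end (no error terms), and simultaneously preserve the $\pi_1(\mathbb T^2)$-injectivity through the surgery and the cube-compactification. The quasi-spherical collar of Bartnik–Shi–Tam is designed for spherical symmetry cross nothing; adapting it to the $\Sigma_0\times\mathbb T^2$ boundary (so that the $\mathbb T^2$ stays flat and isometrically fixed while only the $\Sigma_0$-direction is deformed to a round sphere and then to a flat exterior) is the delicate analytic core — it amounts to solving a parabolic prescribed-mean-curvature equation along the foliation by $\Sigma_t\times\mathbb T^2$ and checking the scalar curvature sign is governed by the same inequality $H>H_0$. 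A secondary subtlety is the exact dimension bookkeeping that produces the stated range $n\le 5$ rather than $n\le 6$; I suspect it comes from needing Theorem \ref{Thm: main 2.2} not for $\mathbb T^n$ with $n=\dim(\Sigma_0\times\mathbb T^2)+? $ but after one application of Schoen–Yau descent inside that proof, and I would make this precise before finalizing.
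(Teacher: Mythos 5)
Your final route --- attach a quasi--spherical collar to the outside of $\Omega$ (rather than gluing directly to the flat exterior), then compactify and invoke Theorem~\ref{Thm: main 2.2} --- is indeed the paper's approach, and the rigidity mechanism you gesture at is also the right one. But there are several concrete gaps worth flagging.

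First, your initial ``direct gluing'' idea cannot work as you describe. Miao's corner-smoothing needs a \emph{pointwise} mean-curvature inequality $H^{\mathrm{in}}\geq H^{\mathrm{out}}$ along the glued hypersurface, whereas the hypothesis you start from is only the \emph{integral} bound $\int_{\partial\Omega}H>T_0\,\area(\mathbb T^2)$. The whole point of inserting the quasi--spherical collar is to upgrade this integral defect into a pointwise one far out in the collar: one runs the quasi--spherical equation $\bar H_t\,\partial_t u=u^2\Delta_{\bar\gamma_t}u+\tfrac12 R_{\bar\gamma_t}(u-u^3)$ outward from $\partial\Omega$ with initial datum $u_0=\bar H_0/H_{\partial\Omega}$ (matching the fill-in data along $\partial\Omega$), shows $u$ converges to a constant, and uses the monotonicity of $\int \bar H_t(u^{-1}-1)\,\mathrm d\sigma_t$ to see that the strict integral inequality forces $\lim u<1$; only \emph{then} is the slice mean curvature pointwise strictly larger than the flat one, and one can attach the flat torus exterior and smooth the corners. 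You also do not need to ``deform $\Omega$ near its boundary to make it round'' --- $\Omega$ is left untouched, the collar is attached outside it.

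Second, the dimension count $n\leq 5$ is not a mystery: $\Sigma_0\times\mathbb T^2$ has dimension $n+1$, so the fill-in $\Omega$ (and hence the closed manifold produced by the cube compactification) has dimension $n+2$, and Theorem~\ref{Thm: main 2.2} needs this $\leq 7$. You were counting the boundary dimension rather than $\dim\Omega$.

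Third, and this is the analytic core the paper singles out: the quasi--spherical equation on $\Sigma_0\times\mathbb T^2\times[0,\infty)$ is a \emph{degenerate} parabolic equation because there is no diffusion from the $\mathbb T^2$-directions into the growing $\Sigma_0$-directions --- the metric coefficients on $\mathbb T^2$ do not scale with $t$. This means the standard Bartnik--Shi--Tam collar analysis does not apply off the shelf; the paper removes the degeneracy with the rescaling maps $\Psi_k$ (lifting along the $\mathbb T^k$-factor) so that a nondegenerate limiting parabolic equation is obtained on a compact time slab, and the strong maximum principle can then be used to rule out a persistent oscillation. Your write-up does not mention this degeneracy at all, and without that trick (or an equivalent) the convergence-of-$u$ step is unjustified.

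Finally, the rigidity argument is much simpler than the variational/Lagrange-multiplier picture you sketch: if $R_g>0$ somewhere on $\Omega$, a small conformal change simultaneously lowers the interior scalar curvature and raises the boundary mean curvature without changing the induced boundary metric, producing a new admissible fill-in that violates the strict inequality already proved; hence $R_g\equiv 0$, and staticness then follows directly from Corvino's theorem. Going through the ALG positive mass theorem first, as you suggest, is a detour and in fact gives too strong a conclusion at the intermediate step; the paper compactifies to a closed manifold and goes straight to Theorem~\ref{Thm: main 2.2}.
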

\begin{remark}
The estimates \eqref{Eq: total mean curvature 1} and \eqref{Eq: total mean curvature 2} are optimal. To see this, we let $\Omega_0$ be the region enclosed by $\Sigma_0$ in the Euclidean space $\mathbb E^n$, then the admissible fill-in $\Omega_0\times \mathbb S^1(l)$ and $\Omega_0\times \mathbb T^2$ makes \eqref{Eq: total mean curvature 1} and \eqref{Eq: total mean curvature 2} become an equality respectively.
\end{remark}
\begin{remark}
Taking $\mathbb R^{n-k}\times \mathbb T^k$ as a model space, our results yield the positivity of Brown-York mass given by
$$
m_{BY}(\Omega,g)=T_0\cdot\area(\mathbb T^k)-\int_{\partial\Omega}H_{\partial\Omega}\,\mathrm d\sigma_g.
$$ 
Generally, the incompressible conditions are necessary since the region $\Sigma_0\times \mathbb D^2(l)$ would make the Brown-York mass negative when $l$ is small enough.
\end{remark}

The theorems above come from an application of quasi-spherical metrics as in \cite{yuguang2019on}, but the quasi-spherical equation now appears to be a degenerated parabolic equation, which leads to extra trouble in analyzing the behavior of its solution. We get over this difficulty by a clever use of the lifting property from $\mathbb T^k$-factors to remove the degeneracy (refer to the proof of Theorem \ref{Thm: fill-in 1} for details).

From above theorem we can also establish an optimal estimate for supremum total mean curvature of product flat torus $\mathbb S^1(l_1)\times \mathbb S^1(l_2)$.
\begin{corollary}\label{Cor: fill-in 2-torus}
Let $(\Sigma,\gamma)$ be the flat $2$-torus $\mathbb S^1(l_1)\times \mathbb S^1(l_2)$ with $l_1\leq l_2$. Then it holds
$$
\Lambda_+(\Sigma,\gamma)= 4\pi^2 l_2.
$$
Moreover, if $(\Omega,g)$ is an admissible fill-in of $(\Sigma,\gamma)$ whose boundary has total mean curvature $4\pi^2l_2$, then it is static with vanishing scalar curvature.
\end{corollary}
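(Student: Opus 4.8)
The strategy is to read the flat $2$-torus $\mathbb S^1(l_1)\times\mathbb S^1(l_2)$ as a product of the form $\Sigma_0\times\mathbb S^1(l)$ and to invoke Theorem \ref{Thm: fill-in 1}. The point is that when $\Sigma_0$ is a round circle in $\mathbb R^2$ its total mean curvature equals $T_0=2\pi$ by Gauss--Bonnet (the total geodesic curvature of a convex plane curve is $2\pi$), so that \eqref{Eq: total mean curvature 1} becomes the clean bound $\int_{\partial\Omega}H\,\mathrm d\sigma_g\le 2\pi l\cdot 2\pi=4\pi^2 l$. For the lower bound $\Lambda_+(\Sigma,\gamma)\ge 4\pi^2 l_2$ I would produce the extremal fill-in from the remark after Theorem \ref{Thm: fill-in 2}: let $\Omega_0\subset\mathbb E^2$ be the flat disk bounded by the round circle isometric to $\mathbb S^1(l_1)$ and take $(\Omega_0\times\mathbb S^1(l_2),g_0)$ with the product flat metric. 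This is an admissible fill-in of $(\Sigma,\gamma)$ whose boundary has mean curvature equal to the (positive, constant) geodesic curvature of $\partial\Omega_0$, and since $\int_{\partial\Omega_0}\kappa=2\pi$ while $\mathbb S^1(l_2)$ has length $2\pi l_2$ one gets $\int_{\partial(\Omega_0\times\mathbb S^1(l_2))}H\,\mathrm d\sigma_{g_0}=4\pi^2 l_2$.

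The heart of the matter is the matching upper bound, and here the key input is topological. Let $(\Omega,g)$ be any admissible fill-in of $(\Sigma,\gamma)$, which, after discarding closed components, we may assume connected; then $\Omega$ is a compact orientable $3$-manifold with $\partial\Omega=\mathbb T^2$, so by the ``half lives, half dies'' lemma the kernel of $H_1(\partial\Omega;\mathbb Q)\to H_1(\Omega;\mathbb Q)$ has dimension exactly $\tfrac12\dim H_1(\mathbb T^2;\mathbb Q)=1$. In particular this kernel is a proper subspace, so at least one of the two factor circles, say $\mathbb S^1(l_i)$ with $\{i,j\}=\{1,2\}$, has nonzero image in $H_1(\Omega;\mathbb Q)$ and is therefore homotopically non-trivial in $\Omega$. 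Viewing $\Sigma_0=\mathbb S^1(l_j)$ as a convex curve in $\mathbb R^2$ (so $n=2\le 6$ and $T_0=2\pi$), the pair $(\Omega,g)$ is an admissible fill-in of $\Sigma_0\times\mathbb S^1(l_i)$ with homotopically non-trivial circle factor, so Theorem \ref{Thm: fill-in 1} yields
$$
\int_{\partial\Omega}H_{\partial\Omega}\,\mathrm d\sigma_g\ \le\ 2\pi l_i\cdot T_0\ =\ 4\pi^2 l_i\ \le\ 4\pi^2 l_2 .
$$
Taking the supremum over all admissible fill-ins and combining with the lower bound gives $\Lambda_+(\Sigma,\gamma)=4\pi^2 l_2$.

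For the rigidity statement, if an admissible fill-in $(\Omega,g)$ satisfies $\int_{\partial\Omega}H_{\partial\Omega}\,\mathrm d\sigma_g=4\pi^2 l_2$, then every inequality in the display above must be an equality; in particular $l_i=l_2$ and equality holds in \eqref{Eq: total mean curvature 1}, so the rigidity part of Theorem \ref{Thm: fill-in 1} forces $(\Omega,g)$ to be static with vanishing scalar curvature. I expect the one genuinely non-formal step to be the topological observation singling out an incompressible factor circle: it is precisely the freedom to choose which of the two circles plays the role of the ``$\mathbb S^1(l)$-direction'' in Theorem \ref{Thm: fill-in 1} that makes the bound hold for every fill-in (contrast the higher-codimensional situation in the remark after Theorem \ref{Thm: fill-in 2}, where an incompressibility hypothesis has to be imposed by hand). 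The remaining work — normalizing $\mathbb S^1(l)$ and recording $T_0=2\pi$ — is routine bookkeeping.
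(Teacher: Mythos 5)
Your proof is correct, and the upper bound is obtained by a genuinely different topological argument from the paper's. The paper dichotomizes on whether $\mathbb S^1(l_1)$ is nullhomotopic in $\Omega$: if not, one applies Theorem~\ref{Thm: fill-in 1} directly with $\mathbb S^1(l_1)$ as the circle factor; if so, it invokes Dehn's lemma to produce the structure $\Omega\cong\mathbb D^2\times\mathbb S^1\,\sharp\,\Omega'$, from which it reads off that $\mathbb S^1(l_2)$ is homotopically nontrivial and applies the theorem with $\mathbb S^1(l_2)$ as circle factor. You instead use the ``half lives, half dies'' consequence of Poincar\'e--Lefschetz duality to see that the kernel of $H_1(\mathbb T^2;\mathbb Q)\to H_1(\Omega;\mathbb Q)$ is one-dimensional, so at least one of the two coordinate circles has nonzero rational image and hence is homotopically nontrivial; this is cleaner and more elementary than Dehn's lemma, and it isolates exactly the fact that is used. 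A minor stylistic remark: for the rigidity step it is worth being explicit that you apply Theorem~\ref{Thm: fill-in 1} with whichever index $i$ the homological argument produces, so that $\int_{\partial\Omega}H\,\mathrm d\sigma_g=4\pi^2 l_2$ forces $\int_{\partial\Omega}H\,\mathrm d\sigma_g=4\pi^2 l_i$ and hence equality in \eqref{Eq: total mean curvature 1}; your write-up already implies this, but it would be clearer if spelled out. The lower bound via the flat solid-torus fill-in $\mathbb D^2(l_1)\times\mathbb S^1(l_2)$ coincides with the paper's.
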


In their paper \cite{shi2020total}, the second author and his collaborators built the equivalence between the positive mass theorem of AF $n$-manifolds and the optimal estimate $\Lambda_+(\mathbb S^{n-1},\gamma_{std})=(n-1)\omega_{n-1}$ for the standard round sphere. It is also appealing to show the equivalence between above optimal estimate and the positive mass theorem for general ALF $3$-manifolds.
{ Since all flat metrics on $\mathbb T^2$ form a connected space, we can further obtain the finiteness of supremum total mean curvature for flat $2$-toruses. Namely, we have
\begin{corollary}\label{Cor: total mean curvature flat torus}
Let $(\Sigma,\gamma)$ be a flat $2$-torus. Then $\Lambda_+(\Sigma,\gamma)<+\infty$.
\end{corollary}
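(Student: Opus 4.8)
The plan is to deduce the statement from Corollary \ref{Cor: fill-in 2-torus} by a connectedness argument on the space $\mathcal M$ of flat metrics on $\mathbb T^2=\mathbb R^2/\mathbb Z^2$. Identifying a flat metric with its (constant) Gram matrix, $\mathcal M$ is the convex cone of positive definite symmetric $2\times 2$ matrices, hence connected; and by Corollary \ref{Cor: fill-in 2-torus} the subset $\mathcal F=\{\gamma\in\mathcal M:\Lambda_+(\mathbb T^2,\gamma)<+\infty\}$ contains every product metric $\mathbb S^1(l_1)\times\mathbb S^1(l_2)$, so $\mathcal F\neq\emptyset$. It therefore suffices to prove that $\mathcal F$ is both open and closed, and this in turn follows from a \emph{local comparison}: for each $\gamma_0\in\mathcal M$ there are a neighbourhood $U\ni\gamma_0$ and a constant $c\in(0,1)$ such that $c\,\Lambda_+(\mathbb T^2,\gamma)\le\Lambda_+(\mathbb T^2,\gamma_0)$ and $c\,\Lambda_+(\mathbb T^2,\gamma_0)\le\Lambda_+(\mathbb T^2,\gamma)$ for all $\gamma\in U$. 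Indeed, the first inequality shows $\mathcal F$ is open and the second shows its complement is open, so $\mathcal F$ is clopen and hence all of $\mathcal M$.

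To establish the local comparison I would modify a given admissible fill-in near its boundary. Let $(\Omega,g)$ be an admissible fill-in of $(\mathbb T^2,\gamma_0)$ and write $g=\mathrm dt^2+h_t$ in a boundary collar $\mathbb T^2\times[0,\delta]\hookrightarrow\Omega$ with $h_0=\gamma_0$. For $\gamma\in U$ I replace $h_t$ on a thin shell by a path $h_t'$ with $h_0'=\gamma$, with $h_t'\equiv h_t$ near $t=\delta$, and with $\partial_t h_t'|_{t=0}$ chosen so that the boundary mean curvature changes by only $O(|\gamma-\gamma_0|)$. The basic building block is the affine interpolation $\gamma_t=\gamma_0+\tfrac t\Delta(\gamma-\gamma_0)$ between flat metrics: from $R_{\mathrm dt^2+\gamma_t}=R_{\gamma_t}-|k|^2-H^2-2\partial_t H$ with $k=\tfrac12\partial_t\gamma_t$, $H=\tr_{\gamma_t}k$, and $R_{\gamma_t}\equiv0$, a direct computation gives
\[
R_{\mathrm dt^2+\gamma_t}=\frac{1}{2\Delta^2}\bigl(\mu_1^2-\mu_1\mu_2+\mu_2^2\bigr)\ge 0,
\]
where $\mu_1,\mu_2$ are the eigenvalues of $\gamma_t^{-1/2}(\gamma-\gamma_0)\gamma_t^{-1/2}$, so affine collars of flat metrics carry nonnegative scalar curvature; grafting such a collar onto the unmodified part of $g$ produces a piecewise smooth metric with nonnegative scalar curvature in the distributional sense, to which the corner smoothing of \cite{Miao2002} applies once the interface mean–curvature inequality holds. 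The resulting smooth metric is an admissible fill-in of $(\mathbb T^2,\gamma)$, and its boundary, being a small perturbation of $\partial\Omega$, has total mean curvature at least $c$ times that of $(\Omega,g)$, with $c\to1$ as $\gamma\to\gamma_0$; taking suprema yields the comparison, and the reverse inequality is obtained by exchanging $\gamma_0$ and $\gamma$.

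The step I expect to be the main obstacle is precisely this bookkeeping. The total mean curvature of a fill–in is determined by the geometry in an arbitrarily thin shell around the boundary, so a naive collar of fixed width would simply overwrite it; the surgery must be performed in a shell whose thickness is adapted to $|\gamma-\gamma_0|$ and must be arranged so that the \emph{profile} of the boundary mean curvature of $(\Omega,g)$ is (approximately) reproduced — this forces the interpolating collar to be genuinely non-split (non-constant slices), at which point one must recheck that the scalar–curvature error remains absorbable, that the new boundary stays mean convex, and that the corner inequality with the unmodified interior survives. Fill-ins whose boundary mean curvature vanishes at a point are the worst case for the corner inequality and should be dealt with first by approximating $\Lambda_+$ through strictly mean convex fill-ins. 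Granting the local comparison on all of $\mathcal M$, connectedness of $\mathcal M$ together with Corollary \ref{Cor: fill-in 2-torus} forces $\mathcal F=\mathcal M$, i.e.\ $\Lambda_+(\Sigma,\gamma)<+\infty$ for every flat $2$-torus.
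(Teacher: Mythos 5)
Your overall framing --- connectedness of the space of flat metrics on $\mathbb T^2$, reduce to the square torus via Corollary \ref{Cor: fill-in 2-torus}, transport a fill-in through a transition region between nearby flat metrics --- is the same starting point as the paper, and your observation that the affine path $\gamma_t=\gamma_0+\tfrac{t}{\Delta}(\gamma-\gamma_0)$ of flat $2$-tori gives a split cylinder metric $\mathrm dt^2+\gamma_t$ with
\[
R_{\mathrm dt^2+\gamma_t}=\frac{1}{2\Delta^2}\bigl(\mu_1^2-\mu_1\mu_2+\mu_2^2\bigr)\geq 0
\]
is correct (I checked the computation). However the core step, the ``local comparison'' $c\,\Lambda_+(\gamma)\leq\Lambda_+(\gamma_0)$, does not follow from this collar. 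A split collar $\mathrm dt^2+\gamma_t$ with constant slices has second fundamental form $\tfrac12\partial_t\gamma_t$ depending only on $t$, so the mean curvature of the new boundary $\{t=0\}$ is the \emph{constant} $\tfrac{1}{2\Delta}\tr_\gamma(\gamma-\gamma_0)$ and carries no memory of the boundary mean curvature of the original fill-in $(\Omega,g)$. Grafting the affine collar onto $\Omega$ produces a fill-in of $(\mathbb T^2,\gamma)$ whose total mean curvature is bounded by a quantity depending only on $\gamma,\gamma_0,\Delta$ --- not by $\int_{\partial\Omega}H_{\partial\Omega}$ --- so taking suprema does \emph{not} yield $\Lambda_+(\gamma_0)\geq c\,\Lambda_+(\gamma)$. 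You identify this as ``the main obstacle'' and suggest a non-split collar that reproduces the mean-curvature profile, but you leave this unresolved, and an ad hoc non-split deformation of flat slices will destroy the pointwise nonnegativity you proved (which relied on the slices being flat and the path affine). This is the genuine gap.

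The tool that actually closes this gap is the quasi-spherical construction, which is what the paper uses: instead of a split cylinder, one takes the warped collar $\bar g=\mathrm ds^2+s^2\gamma_{(s-1)/k}$ (slices blow up like $s^2$, so the path of flat metrics is traversed slowly over a long parameter range $[1,k+1]$) and replaces $\mathrm ds^2$ by $u^2\mathrm ds^2$, where $u$ solves the quasi-spherical PDE so that $\tilde g$ has zero scalar curvature. The function $u$, initialized from the boundary mean curvature $H_{\partial\Omega}$, is exactly the device that transports the mean-curvature information outward, and the differential inequality
\[
\frac{\mathrm d}{\mathrm ds}\int_{\{s\}\times\mathbb T^2}\bar H_s u^{-1}\,\mathrm d\sigma_s\geq\frac{1}{4s}\int_{\{s\}\times\mathbb T^2}\bar H_s u^{-1}\,\mathrm d\sigma_s
\]
(a consequence of the estimate on $\bar A_s-\tfrac1s\bar\gamma_{(s-1)/k}$ that the warping and the slowness of the $\gamma$-variation provide) gives the needed comparison of total mean curvatures. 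With that in hand one does not need a clopen argument at all: a single collar carries any $\gamma$ to the square torus and gives $\Lambda_+(\gamma)\leq C\cdot 4\pi^2(k+1)<\infty$ directly. So your affine collar is a useful warm-up but cannot substitute for the quasi-spherical equation, and the proposal as written is incomplete at precisely the step you flagged.
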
}

The rest of the paper is organized as follows. In section \ref{Sec: general surgery}, we define the general surgery considered in this paper and devote a proof for Theorem \ref{Thm: main 2.1} and \ref{Thm: main 2.2}. The proof of these theorems comes from a careful analysis on intersections of the submanifold given by dimension reduction argument and an $(n-1)$-torus $S$. For Theorem \ref{Thm: main 2.1} the homotopy non-trivial $\mathbb S^1$ helps us to show the manifold after surgery is a Schoen-Yau-Schick (SYS) manifold defined in \cite{Gromov18} and so the result follows quickly. For Theorem \ref{Thm: main 2.2} the injectivity of map $i_*$ is used to show that the intersection is in the image of Hurewicz homomorphism $\pi_2(S)\to H_2(S,\mathbf Z)$, which leads to a contradiction to the homology non-trivial property of this intersection from the dimension reduction argument. In Section \ref{Sec: PMT}, we give a proof for Theorem \ref{ALFPMT0}. We show how to reduce positive mass theorems of ALF and ALG manifolds to Theorem \ref{Thm: main 2.1} and \ref{Thm: main 2.2} based on quasi-spherical metrics and gluing method. A rigidity argument with a use of Ricci flow is included in this section as well. In Section \ref{Sec: fill-in}}, we investigate the fill-in problem for manifolds in the form of $\mathbb S^p\times \mathbb S^1$ or $\mathbb S^p\times \mathbb T^2$. Proofs for Theorem \ref{Thm: fill-in 1}, Theorem \ref{Thm: fill-in 2}, Corollary \ref{Cor: fill-in 2-torus} and Corollary \ref{Cor: total mean curvature flat torus} are given in that section.

 \medskip
 {\it Acknowledgements.} We would like to thank Professor Shing-Tung Yau for drawing our attention to positive mass theorems on manifolds with general asymptotic structure at the infinity. We are also grateful to Dr. Chao Li for many inspiring discussions with the second author on relationships between positive mass theorem for ALF manifolds and non-negative fill-in problems.

\section{General surgery to $n$-torus and positive scalar curvature}\label{Sec: general surgery}
Let $\Omega$ be an orientable compact $n$-manifold. We collect all orientable closed $n$-manifolds containing $\Omega$ as a subregion and denote
$$
\mathcal C_\Omega=\left\{(M,\phi)\left|\begin{array}{c}\text{$M$ is an orientable compact $n$-manifold}\\
\text{$\phi:\Omega\to M$ is an embedding}
\end{array}\right.\right\}.
$$
Given any pairs $(M_1,\phi_1)$ and $(M_2,\phi_2)$ in $\mathcal C_\Omega$, we can define the gluing space
of $(M_1,\phi_1)$ and $(M_2,\phi_2)$ along $\Omega$ by
$$
M=\left(M_1-\Omega_1\right)\sqcup_{\Phi}\left(M_2-\Omega_2\right),
$$
where $\Omega_i=\phi_i(\Omega)$ for $i=1,2$ and
$
\Phi=\phi_2\circ\phi_1^{-1}:\partial \Omega_1\to \partial \Omega_2.
$
For convenience, we denote
$$M=(M_1,\phi_1)\sharp_\Omega (M_2,\phi_2) $$
and we will further omit the symbol $\Omega$ if it is clear from the context. Since the construction above reduces to connected sum if $\Omega$ is a $n$-ball, we also call $M$ the manifold from a general surgery between $(M_1,\phi_1)$ and $(M_2,\phi_2)$.

It is interesting to consider the relation between PSC metrics and general surgeries. In this aspect, the following results are well-known.

\begin{theorem}[Schoen-Yau \cite{SY1979}]
If $i:\Omega\to \mathbb T^n$ is a ball in $\mathbb T^n$, then $(\mathbb T^n,i)\sharp (M_2,\phi_2)$ admits no PSC metric for any $(M_2,\phi_2)$ in $\mathcal C_\Omega$.
\end{theorem}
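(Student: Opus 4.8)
\noindent\emph{Sketch of proof.}\ The plan is to run the classical Schoen--Yau minimal-hypersurface descent \cite{SY1979}, letting topology carry all the bookkeeping. Since $\Omega$ is a ball, the gluing space $(\mathbb T^n,i)\sharp(M_2,\phi_2)$ is just the connected sum $X:=\mathbb T^n\#M_2$, and collapsing the $M_2$-summand to a point gives a degree-one map $f\colon X\to\mathbb T^n$. With $\alpha_j:=f^*e_j\in H^1(X;\mathbb Z)$ for the standard generators $e_1,\dots,e_n$ of $H^1(\mathbb T^n;\mathbb Z)$, we get
\[
\big\langle\alpha_1\cup\cdots\cup\alpha_n,[X]\big\rangle=\big\langle e_1\cup\cdots\cup e_n,f_*[X]\big\rangle=\deg f=1,
\]
so $\alpha_1\cup\cdots\cup\alpha_n\neq0$ in $H^n(X;\mathbb Z)$. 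The rest of the argument shows that this is incompatible with a PSC metric on $X$ when $n\le7$; the case $n=2$ is immediate, since $X=\mathbb T^2\#M_2$ is then a closed oriented surface of genus $\ge1$ and carries no PSC metric by Gauss--Bonnet, and $n\le1$ is vacuous.

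Suppose, for contradiction, that $g$ has $R_g>0$, and set $\Sigma_0:=X$, $g_0:=g$. Inductively, given a closed, connected, oriented manifold $\Sigma_k$ of dimension $n-k\ge3$ carrying a PSC metric $g_k$ with $\alpha_1|_{\Sigma_k}\cup\cdots\cup\alpha_{n-k}|_{\Sigma_k}\neq0$, I would represent $\alpha_{n-k}|_{\Sigma_k}$ by a smooth map $\Sigma_k\to\mathbb S^1$ and minimize $(n-k-1)$-area in the homology class of a regular level set, which is Poincar\'e dual to $\alpha_{n-k}|_{\Sigma_k}$. Since $\dim\Sigma_k\le7$, the minimizer $\Sigma_{k+1}$ is a smooth, embedded, two-sided, stable minimal hypersurface, and it is orientable because $\Sigma_k$ is. For every $\beta\in H^{n-k-1}(\Sigma_k;\mathbb Z)$ the cup/cap adjunction gives $\langle\beta|_{\Sigma_{k+1}},[\Sigma_{k+1}]\rangle=\langle\beta\cup\alpha_{n-k}|_{\Sigma_k},[\Sigma_k]\rangle$, so taking $\beta=\alpha_1|_{\Sigma_k}\cup\cdots\cup\alpha_{n-k-1}|_{\Sigma_k}$ keeps the corresponding cup product nonzero on $\Sigma_{k+1}$, hence on one of its components, by which we replace $\Sigma_{k+1}$. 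When $\dim\Sigma_{k+1}\ge3$, the Schoen--Yau stability/conformal-change lemma supplies a PSC metric $g_{k+1}$ on $\Sigma_{k+1}$, and the induction proceeds.

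The process stops at a closed, connected, oriented $3$-manifold $\Sigma_{n-3}$ with $R>0$ and with $\alpha_1\cup\alpha_2\cup\alpha_3\neq0$ (restricted). One more minimization produces a closed, connected, oriented surface $\Sigma\subset\Sigma_{n-3}$, stable minimal, on which $\alpha_1\cup\alpha_2\neq0$; this forces $b_1(\Sigma)\ge2$, so $\Sigma$ has genus $\ge1$ and $\chi(\Sigma)\le0$. On the other hand, testing the stability inequality for $\Sigma\subset\Sigma_{n-3}$ against the constant function and applying the Gauss equation together with $R_{\Sigma_{n-3}}>0$ yields $\int_\Sigma K\,\mathrm dA>0$, whence $\chi(\Sigma)>0$ by Gauss--Bonnet. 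This contradiction shows that $X$ admits no PSC metric.

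The main obstacle, and the only point at which the dimension restriction enters, is the regularity theory feeding the descent: one needs each homologically area-minimizing hypersurface $\Sigma_{k+1}$ to be a smooth, two-sided submanifold, which holds precisely because $\dim\Sigma_k\le7$ forces its singular set to be empty, and one needs the Schoen--Yau conformal-change lemma to transport positivity of scalar curvature one dimension down. For $n>7$ this particular route fails, and one must instead appeal to enlargeability and $\widehat A$-genus techniques in the spirit of Gromov--Lawson \cite{GL80}, or to the generalized minimal slicings of Schoen--Yau; since the present paper works only in dimensions $\le7$, the descent above suffices.
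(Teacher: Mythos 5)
Your argument is correct and reconstructs the standard Schoen--Yau minimal-hypersurface descent from \cite{SY1979}, which is precisely what the paper's citation delegates to; the paper itself gives no independent proof of this background theorem. The degree-one map observation, the cup/cap bookkeeping $\langle\beta|_{\Sigma_{k+1}},[\Sigma_{k+1}]\rangle=\langle\beta\smile\alpha_{n-k}|_{\Sigma_k},[\Sigma_k]\rangle$ propagating a nonzero top cup product, the appeal to regularity for $\dim\le 7$, the conformal PSC descent lemma, and the terminal stability/Gauss--Bonnet contradiction on the $2$-dimensional slice are all in order and match the classical argument.
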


\begin{theorem}[Schoen-Yau \cite{SY1979}, Gromov-Lawson \cite{GL1980}]\label{Thm: codim 3 surgery}
If $\Omega$ is a tube neigborhood of a submanifold in it with codimention no less than three, then $(M_1,\phi_1)\sharp(M_2,\phi_2)$ admits a PSC metric for any $(M_1,\phi_1)$ and $(M_2,\phi_2)$ in $\mathcal C_\Omega$ if both $M_1$ and $M_2$ admit a PSC metric.
\end{theorem}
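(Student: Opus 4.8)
The plan is to reduce the statement to the Gromov--Lawson surgery construction, the very point of which is that positivity of scalar curvature survives surgeries of codimension at least three. Write $\Omega$ as the disk bundle $\mathbb D(\nu)$ of the normal bundle $\nu\to N$ of its core submanifold $N$, so that $q:=\operatorname{rank}\nu=n-\dim N\geq 3$ and $\partial\Omega=\mathbb S(\nu)$ is the associated sphere bundle; put $N_i=\phi_i(N)$ and $\Omega_i=\phi_i(\Omega)$ inside $M_i$, and fix metrics $g_i$ on $M_i$ with $R_{g_i}>0$, hence $R_{g_i}\geq\kappa_i>0$ by compactness. I will deform $g_1$ and $g_2$ near $N_1$ and $N_2$ into a standard product form, cut each $M_i$ along a small sphere bundle sitting inside the product region, and glue.

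The first step, which is the heart of the matter, is the Gromov--Lawson bending lemma (\cite{GL1980}; see also \cite{SY1979}). In Fermi coordinates $(x,r,\omega)\in N_i\times[0,r_0)\times\mathbb S^{q-1}$ around $N_i$ one has $g_i=\mathrm dr^2+r^2g_{\mathbb S^{q-1}}+g_{N_i}+(\text{terms of lower order in }r)$. One leaves $g_i$ unchanged for $r\geq r_1$ and, on $\{r\le r_1\}$, replaces the radial ray by a curve $\gamma$ in the half-plane $\{(x,y):y\ge 0\}$ which coincides with the ray near $r=r_1$ and then bends, concavely, until it becomes the horizontal ray $y\equiv\varepsilon$; revolving $\mathbb S^{q-1}$ over $\gamma$ replaces the flat normal cone by a metric whose ``revolution part'' has scalar curvature
$$
\frac{q-1}{y^2}\bigl((q-2)(1-(y')^2)-2yy''\bigr)\geq 0,
$$
the inequality holding along the bend because $\gamma$ is concave, and the expression equalling $(q-1)(q-2)\varepsilon^{-2}$ once $\gamma$ has flattened out --- strictly positive since $q\geq 3$. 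Choosing $r_1$ small (so that $g_i$ is $C^2$-close there to its normal cone model and the lower-order corrections produced by the bending are negligible compared with $\kappa_i$) and then $\varepsilon$ smaller still, one gets a metric $\bar g_i$ on $M_i$ with $R_{\bar g_i}>0$, equal to $g_i$ outside $\{r\le r_1\}$, and equal on a region $N_i\times[0,\ell_i]\times\mathbb S^{q-1}$ near the new core to a Riemannian product $h_i+\mathrm ds^2$, where $h_i$ is a metric on $\mathbb S(\nu)$ whose fibres are round of radius $\varepsilon$. A non-trivial normal bundle is handled in the standard way, performing the construction fibrewise with respect to a metric connection on $\nu$; the connection terms contribute only bounded corrections, again dominated by $\varepsilon^{-2}$.

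The gluing is then routine. Removing $\Omega_i$ from $M_i$ or removing the smaller tube $\{r\le\varepsilon\}$ from $M_i$ yields diffeomorphic manifolds, differing by a collar of $\mathbb S(\nu)$; hence $M=(M_1-\Omega_1)\sqcup_\Phi(M_2-\Omega_2)$ is diffeomorphic to the manifold obtained by gluing $M_1$ and $M_2$, each with such a small tube removed, along their boundary sphere bundles $\mathbb S(\nu)_\varepsilon$ via a suitable identification $\tilde\Phi$. Put $\bar g_1$ on the first piece and $\bar g_2$ on the second; in a band $\mathbb S(\nu)_\varepsilon\times(-1,1)$ around the gluing locus take the metric $h_t+\mathrm ds^2$, where $t=t(s)$ is smooth with $t\equiv 0$ near $s=-1$ and $t\equiv 1$ near $s=1$, and $h_t$ interpolates smoothly from $h_1$ to $\tilde\Phi^*h_2$ while keeping the fibre component of $h_t$ equal to $\varepsilon^2 g_{\mathbb S^{q-1}}$ for all $t$. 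The scalar curvature of such a metric is bounded below by $(q-1)(q-2)\varepsilon^{-2}-C$ with $C$ depending only on a fixed $C^2$-bound for the compactly supported interpolation, hence is positive once $\varepsilon$ is small; since this metric agrees with $\bar g_1$ and $\bar g_2$ near the two ends of the band, it patches them into a metric of positive scalar curvature on $M$.

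The only genuine obstacle is the bending lemma: choosing the curve $\gamma$ so that the displayed revolution scalar curvature stays nonnegative throughout the transition from vertical to horizontal, and verifying that the lower-order ambient corrections remain under control during the bend (including those arising from a possibly non-trivial normal bundle). Once the metrics have been brought to product form near a small round sphere bundle, the remaining diffeomorphism identifications and the interpolation are elementary, precisely because the common $\varepsilon$-round normalization of the fibres makes the cost of interpolating negligible next to the $\varepsilon^{-2}$ gain.
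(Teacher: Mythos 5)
The paper does not prove this statement; it is cited as a known result of Schoen--Yau \cite{SY1979} and Gromov--Lawson \cite{GL1980}, so there is no ``paper proof'' to compare against. Your proposal is a correct reconstruction of the standard surgery argument, adapted to the two-sided generalized connected sum: apply the Gromov--Lawson bending lemma to each of $(M_i,g_i)$ separately, normalizing a neighborhood of $N_i=\phi_i(N)$ to a Riemannian product $h_i+\mathrm ds^2$ in which the $\mathbb S^{q-1}$-fibres of $\mathbb S(\nu)$ are round of a common small radius $\varepsilon$; then, since the glued manifold $(M_1-\Omega_1)\sqcup_\Phi(M_2-\Omega_2)$ differs from the union of the two bent pieces by a collar, interpolate $h_1$ to $\tilde\Phi^*h_2$ keeping the fibre metric fixed and let the $(q-1)(q-2)\varepsilon^{-2}$ fibre term (here $q\ge 3$ is essential) dominate the $O(1)$ cost of the interpolation and connection terms. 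Equivalently, one could view $(M_1,\phi_1)\sharp(M_2,\phi_2)$ as the result of a single codimension-$q$ surgery on $M_1\sqcup M_2$ along $N\times\{1,2\}$ and invoke the one-sided surgery theorem directly. Your argument correctly identifies the genuine technical content (the bending lemma with ambient and connection corrections controlled, and the achievability of arbitrarily small $\varepsilon$), and those are exactly the points established in \cite{GL1980} and \cite{SY1979}; the gluing and interpolation step is, as you say, routine once the fibre radii agree.
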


In this paper, we consider similar problems for manifolds from general surgeries to $n$-torus $\mathbb T^n$. Denote $\mathbb S^1_n$ to be the last circle component of $\mathbb T^n$ and let $i:\Omega\to \mathbb T^n$ be a tube neighborhood of $\mathbb S^1_n$.

We have the following result.

\begin{theorem}\label{Thm: homotopy nontrivial no PSC}
Denote $\gamma$ to be a closed curve in $\partial \Omega$ such that $\gamma$ is homotopic to $\mathbb S^1_n$ in $\mathbb T^n-\Omega$. Suppose that the curve $\phi_2(\gamma)$ is homotopically non-trivial in $M_2-\phi_2(\Omega)$ for some $(M_2,\phi_2)$ in $\mathcal C_\Omega$.
Then $(\mathbb T^n,i)\sharp (M_2,\phi_2)$ admits no PSC metric for $n\leq 7$. Moreover, if $g$ is a smooth metric on $(\mathbb T^n,i)\sharp (M_2,\phi_2)$ with non-negative scalar curvature, then it is flat.
\end{theorem}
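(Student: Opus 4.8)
The plan is to show that $M:=(\mathbb T^n,i)\sharp(M_2,\phi_2)$ is a Schoen--Yau--Schick manifold in the sense of \cite{Gromov18}, and then to invoke the non-existence and rigidity of positive scalar curvature metrics on such manifolds in dimensions $\le 7$ (which is Schoen--Yau's descent through stable minimal hypersurfaces, smooth for $n\le7$). So the real work is to produce classes $h_1,\dots,h_{n-1}\in H^1(M;\mathbb Z)$ such that $\mathrm{PD}_M(h_2\smile\dots\smile h_{n-1})\in H_2(M;\mathbb Z)$ is non-spherical, i.e.\ not in the image of $\pi_2(M)\to H_2(M)$.

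\emph{Step 1 (topology).} Writing $\mathbb T^n=\mathbb T^{n-1}\times\mathbb S^1_n$ with angular coordinates $x_1,\dots,x_{n-1}$ on $\mathbb T^{n-1}$, the tube is $\Omega=\mathbb D^{n-1}_\varepsilon\times\mathbb S^1_n$ about the origin, so $\mathbb T^n-\Omega=(\mathbb T^{n-1}-\mathbb D^{n-1}_\varepsilon)\times\mathbb S^1_n$, $\partial\Omega=\mathbb S^{n-2}\times\mathbb S^1_n$, and $M$ is obtained by gluing this to $M_2-\phi_2(\Omega)$ along $\partial\Omega$. Each $dx_j$ ($1\le j\le n-1$) restricts trivially to $H^1(\partial\Omega;\mathbb Z)$, so a Mayer--Vietoris argument gives $h_1,\dots,h_{n-1}\in H^1(M;\mathbb Z)$ restricting to $dx_1,\dots,dx_{n-1}$ on $\mathbb T^n-\Omega$ and to $0$ on $M_2-\phi_2(\Omega)$. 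Representing Poincaré duals by generic coordinate slices $\{x_j=\tfrac12\}$ --- all of which avoid $\Omega$ --- gives an embedded $(n-1)$-torus $S=\{x_1=\tfrac12\}\subset\mathbb T^n-\Omega\subset M$ dual to $h_1$, an embedded $2$-torus $T=\{x_2=\dots=x_{n-1}=\tfrac12\}\cong\mathbb S^1_1\times\mathbb S^1_n\subset\mathbb T^n-\Omega\subset M$ dual to $h_2\smile\dots\smile h_{n-1}$, and the identity $[S]\cdot[T]=\mathrm{PD}_M(h_1\smile\dots\smile h_{n-1})=[\gamma']\in H_1(M;\mathbb Z)$, where the circle $\gamma'\subset T$ is freely homotopic in $\mathbb T^n-\Omega$ (hence in $M$) to the prescribed $\gamma$.

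\emph{Step 2 (non-sphericality of $[T]$, the crux).} Because $\pi_1(\partial\Omega)\hookrightarrow\pi_1(\mathbb T^n-\Omega)$, van Kampen makes $M_2-\phi_2(\Omega)\hookrightarrow M$ $\pi_1$-injective, so the hypothesis forces $\gamma$ to be homotopically non-trivial in $M$. If $\phi_2(\gamma)$ is already non-trivial in $H_1(M_2-\phi_2(\Omega);\mathbb Z)$ then $[\gamma']\ne0$ in $H_1(M;\mathbb Z)$ and we are done immediately: a map $q\colon\mathbb S^2\to M$ with $q_*[\mathbb S^2]=[T]$ would give $[S]\cdot[T]=h_1\frown q_*[\mathbb S^2]=q_*\bigl((q^*h_1)\frown[\mathbb S^2]\bigr)=0$ since $q^*h_1\in H^1(\mathbb S^2)=0$, contradicting $[S]\cdot[T]=[\gamma']\ne0$. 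In general one argues at the level of $\pi_1$: the image of $[T]$ under $H_2(M)\to H_2(\pi_1 M;\mathbb Z)$ is the image of the fundamental class of $T^2$ under the map induced by $\pi_1(T^2)=\mathbb Z\langle[\mathbb S^1_1]\rangle\oplus\mathbb Z\langle\gamma\rangle\to\pi_1(M)$, and since $\gamma$ generates a non-trivial cyclic subgroup of $\pi_1(M)$ meeting $\langle[\mathbb S^1_1]\rangle$ trivially, one must verify --- via the Bass--Serre / Mayer--Vietoris sequence for the amalgam $\pi_1(M)=\pi_1(\mathbb T^n-\Omega)*_{\pi_1(\partial\Omega)}\pi_1(M_2-\phi_2(\Omega))$ --- that this image is non-zero in $H_2(\pi_1 M;\mathbb Z)$. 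I expect this verification, in particular the case where $\phi_2(\gamma)$ represents a torsion element of $\pi_1(M_2-\phi_2(\Omega))$, to be the main obstacle; the intersection analysis with $S$ above is exactly what makes the non-torsion case transparent.

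\emph{Step 3 (conclusion).} Having exhibited $h_2,\dots,h_{n-1}\in H^1(M;\mathbb Z)$ with $\mathrm{PD}_M(h_2\smile\dots\smile h_{n-1})=[T]$ non-spherical, $M$ is a Schoen--Yau--Schick manifold of dimension $n\le7$; the non-existence theorem then forbids any positive scalar curvature metric on $M$, and the accompanying rigidity statement forces any metric on $M$ with $R_g\ge0$ to be flat, which is the assertion of the theorem.
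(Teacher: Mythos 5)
Your framing is correct in outline and matches the paper's: exhibit classes $\alpha_1,\dots,\alpha_{n-2}\in H^1(M;\mathbf Z)$ whose cap product with $[M]$ is the $2$-torus class $[T]$, and show $[T]$ is not spherical, then invoke Schoen--Yau. Step 1 is fine, and your ``easy case'' in Step 2 (when $\phi_2(\gamma)$ is already non-trivial in $H_1(M_2-\phi_2(\Omega);\mathbf Z)$) is a correct application of the ``in particular'' clause of Theorem~\ref{Thm: SYS}. But this easy case is exactly the one the theorem is \emph{not} about: the whole point of the homotopy-nontrivial hypothesis (as opposed to a homology hypothesis) is to cover examples such as \eqref{Eq: homotopy non-trivial example}, where $\gamma$ bounds in $H_1$. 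For that case you propose to compute the image of $[T]$ in $H_2(\pi_1 M;\mathbf Z)$ via Bass--Serre / Mayer--Vietoris for the amalgam $\pi_1(M)=\pi_1(\mathbb T^n-\Omega)*_{\pi_1(\partial\Omega)}\pi_1(M_2-\phi_2(\Omega))$, but you explicitly stop there. This is a genuine gap: the group-homology Mayer--Vietoris sequence requires both edge maps $\pi_1(\partial\Omega)\to\pi_1(\mathbb T^n-\Omega)$ and $\pi_1(\partial\Omega)\to\pi_1(M_2-\phi_2(\Omega))$ to be injective, and the hypothesis only gives that $\phi_2(\gamma)$ is non-trivial, not of infinite order (you flag the torsion issue yourself), and for $n=3$ the edge group is $\mathbf Z^2$ and only one generator is controlled, so injectivity fails outright. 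The cited rigidity ``accompanying statement'' is also not off-the-shelf; the paper has to argue it separately via Ricci flow plus Cheeger--Gromoll using the lower bound $b_1(M)\ge n-2$.

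The paper's proof fills the crucial gap by a geometric rather than group-homological argument, and sidesteps $H_2(\pi_1 M)$ entirely. Assuming PSC, Theorem~\ref{Thm: SYS} gives a representative $\Sigma$ of $[\mathbb S^1_{n-1}\times\mathbb S^1_n]$ by a union of smooth $2$-spheres. One then intersects $\Sigma$ transversally with the embedded $(n-1)$-torus $S=\mathbb S^1_1\times\cdots\times\mathbb S^1_{n-2}\times\mathbb S^1_n\subset\mathbb T^n-\Omega$. The intersection $\Sigma\cap S$ is a $1$-cycle homologous to $\mathbb S^1_n$ in $H_1(S;\mathbf Z)$; since $S$ is an $(n-1)$-torus, the Hurewicz map $\pi_1(S)\to H_1(S;\mathbf Z)$ is an isomorphism, so a loop $c$ obtained by joining the components of $\Sigma\cap S$ by paths in $S$ is freely homotopic to $\mathbb S^1_n$. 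Each component of $\Sigma\cap S$ lies on a sphere of $\Sigma$ and is therefore nullhomotopic in $M$, hence so are $c$ and $\mathbb S^1_n$, hence so is $\gamma$. This contradicts Proposition~\ref{Prop: same homotopy local and global}, the covering-space extension lemma proved in the appendix, which is what transfers the hypothesis ``$\phi_2(\gamma)$ nontrivial in $\pi_1(M_2-\phi_2(\Omega))$'' into ``$\gamma$ nontrivial in $\pi_1(M)$.'' These two ingredients --- the intersection with $S$ together with the Hurewicz isomorphism on the torus $S$, and Proposition~\ref{Prop: same homotopy local and global} --- are exactly what your sketch is missing at the crux.
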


Our theorem indicates more concrete examples of closed manifold that admits no PSC metric such as
\begin{equation}\label{Eq: homotopy non-trivial example}
(\mathbb T^{n-1}-B)\times \mathbb S^1_n\sqcup_\phi \mathbb S^{n-2}\times (\Sigma_h-D),\quad \phi=\phi_1\times \phi_2:\partial B\times \mathbb S^1\to \mathbb S^{n-2}\times \partial D,
\end{equation}
where $\Sigma_h$ is an orientable closed surface with positive genus $h$ and $D$ denotes a disk in $\Sigma_h$.

On the other hand, our result is sharp since we have the following example.
\begin{example}
We are able to construct a PSC metric on
$$(\mathbb T^{n-1}-B)\times \mathbb S^1_n\sqcup_\phi \mathbb S^{n-2}\times \mathbb D^2$$
based on the technique in \cite{Miao2002}. Take the product manifold
$$
\left(\mathbb R^{n-1}/\mathbf Z^{n-1}\right)\times \mathbb S^1\left(\frac{1}{4n}\right)
$$
and denote $B$ to be the $(n-1)$-ball in $\mathbb R^{n-1}/\mathbf Z^{n-1}$ with radius $1/4$. Clearly,
$$\left(\left(\mathbb R^{n-1}/\mathbf Z^{n-1}\right)-B\right)\times \mathbb S^1\left(\frac{1}{4n}\right)$$
is a flat manifold whose boundary has the mean curvature $-4(n-1)$ with respect to the unit outer normal. Now we take the product manifold
$$
\mathbb S^{n-1}\left(\frac{1}{4}\right)\times \mathbb D^2\left(\frac{1}{4n}\right).
$$
It has positive scalar curvature and its boundary has mean curvature $4n$ with respect to the unit outer normal. Since the sum of above two mean curvatures is positive, we can glue these two manifolds along the boundary and modify the metric to a smooth one with PSC from \cite{Miao2002}.
\end{example}

The key of our theorem is that the homotopically non-trivial condition on $\phi_2(\gamma)$ leads $(\mathbb T^n,i)\sharp (M_2,\phi_2)$ to be a Schoen-Yau-Schick manifold (see definition in \cite[P 662]{Gromov18}). Namely, we use

\begin{theorem}[Schoen-Yau \cite{SY2017}]\label{Thm: SYS}
Assume that $M$ is a compact oriented $n$-manifold with
a metric of positive scalar curvature. If $\alpha_1,\ldots,\alpha_{n-2}$ are classes in
$H^1(M, \mathbf Z)$ with the property that the class $\sigma_2$ given by {$$\sigma_2 = [M]\frown(\alpha_1\smile\cdots\smile \alpha_{n-2})\neq 0\in H_2(M, \mathbf Z),$$ }
then the class $\sigma_2$ can be
represented by a sum of smooth two spheres. If $\alpha_{n-1}$ is any class in
$H^1(M, \mathbf Z)$, then we must have {$\alpha_{n-1} \smile \sigma_2 = 0$}. In particular, if $M$ has
classes $\alpha_1,\ldots, \alpha_{n-1}$ with
{$$
[M]\frown(\alpha_1\smile\cdots\smile \alpha_{n-1})\neq 0\in H_1(M,\mathbf Z)
$$} then $M$ cannot carry a metric of positive scalar curvature.
\end{theorem}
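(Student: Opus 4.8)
The plan is to run the classical Schoen--Yau inductive descent by area-minimizing hypersurfaces: it simultaneously produces a spherical representative of $\sigma_2$ and forces $\alpha_{n-1}\smile\sigma_2=0$, after which the ``in particular'' clause is purely formal.

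\emph{Setting up the descent.} A class $\alpha\in H^1(N,\mathbf Z)$ corresponds to a homotopy class of maps $f:N\to\mathbb S^1$, and the preimage of a regular value is a smooth two-sided embedded hypersurface Poincar\'e dual to $\alpha$; in a compact Riemannian manifold the corresponding codimension-one integral homology class $[N]\frown\alpha$ moreover carries an area-minimizing integral current representative, which for $\dim N\le 7$ is a smooth two-sided embedded (possibly disconnected) minimal hypersurface, stable as a minimizer. Starting from $(M,g)$ with $R_g>0$ I take $M_1\subset M$ an area minimizer in the class $[M]\frown\alpha_1$. The analytic heart is the propagation of positivity: for a two-sided stable minimal hypersurface $\Sigma^{d-1}$ in an ambient manifold $N$ with $R_N>0$, the twice-traced Gauss equation gives $\Ric_N(\nu,\nu)=\tfrac12(R_N-R_\Sigma-|A|^2)$, so the stability inequality $\int_\Sigma|\nabla\phi|^2\ge\int_\Sigma(|A|^2+\Ric_N(\nu,\nu))\phi^2$ becomes $\int_\Sigma(|\nabla\phi|^2+\tfrac12R_\Sigma\phi^2)\ge\int_\Sigma\tfrac12(|A|^2+R_N)\phi^2>0$ for $\phi\not\equiv0$; since the coefficient $\tfrac{2(d-1)}{d-2}$ exceeds $1$ for all $d\ge3$, adding the nonnegative term $(\tfrac{2(d-1)}{d-2}-1)\int_\Sigma|\nabla\phi|^2$ shows the conformal-Laplacian quadratic form $\int_\Sigma\big(\tfrac{4(d-1)}{d-2}|\nabla\phi|^2+R_\Sigma\phi^2\big)$ is positive definite, hence the first eigenvalue of the conformal Laplacian of $\Sigma$ is positive and $\Sigma$ carries a conformal metric of positive scalar curvature. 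Using this I replace the induced metric on $M_1$ by a positive-scalar-curvature conformal metric $\hat g_1$, minimize area in $(M_1,\hat g_1)$ within $[M_1]\frown(\alpha_2|_{M_1})$ to obtain $M_2\subset M_1$, and iterate, performing $n-2$ descents in all.

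\emph{Identifying the bottom surface.} Because $\sigma_2\ne0$, none of the classes being minimized can vanish: if $[M_{k-1}]\frown(\alpha_k|_{M_{k-1}})=0$, then pushing forward to $M$ via the composed inclusion and using $f_*(x\frown f^*\beta)=f_*(x)\frown\beta$ together with induction gives $[M]\frown(\alpha_1\smile\cdots\smile\alpha_k)=0$, whence $\sigma_2=0$, a contradiction; so every minimizer is nonempty and the descent terminates in a nonempty closed surface $F$ with a composed inclusion $\iota:F\hookrightarrow M$ satisfying $\iota_*[F]=[M]\frown(\alpha_1\smile\cdots\smile\alpha_{n-2})=\sigma_2$ by naturality of the cap product. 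Now $F$ is a stable minimal surface in a $3$-manifold of positive scalar curvature, so the stability inequality with $\phi\equiv1$ on each component $F_j$ and $R_{F_j}=2K$ yields $2\int_{F_j}K\ge\int_{F_j}(|A|^2+R)>0$, hence $\chi(F_j)>0$ by Gauss--Bonnet and $F_j\cong\mathbb S^2$. Therefore $\sigma_2=\sum_j\iota_*[F_j]$ is a sum of smooth two-spheres, which is the first assertion. For the second, naturality gives $\sigma_2\frown\alpha_{n-1}=\sum_j\iota_*\big([F_j]\frown\iota^*\alpha_{n-1}\big)$ for any $\alpha_{n-1}\in H^1(M,\mathbf Z)$, and $\iota^*\alpha_{n-1}\in H^1(\mathbb S^2,\mathbf Z)=0$ kills every term, so $\alpha_{n-1}\smile\sigma_2=0$ (interpreted as $\sigma_2\frown\alpha_{n-1}=0\in H_1(M,\mathbf Z)$). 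Finally, if $M$ admitted a positive-scalar-curvature metric together with classes satisfying $[M]\frown(\alpha_1\smile\cdots\smile\alpha_{n-1})\ne0$, then $\sigma_2:=[M]\frown(\alpha_1\smile\cdots\smile\alpha_{n-2})$ would satisfy $\sigma_2\frown\alpha_{n-1}=[M]\frown(\alpha_1\smile\cdots\smile\alpha_{n-1})\ne0$, hence $\sigma_2\ne0$, contradicting what was just proved.

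The main obstacle is the regularity of the area-minimizing currents used at each stage: for $n\le7$ all hypersurfaces appearing in the tower are smooth by the standard interior regularity theory for codimension-one minimizers, but in arbitrary dimension one must control their singular sets, which is precisely the analytic content of \cite{SY2017}; I would invoke that as a black box beyond the smooth range. A secondary but routine point is to check that the cap-product identities and the non-degeneracy of the minimized classes persist through all the restrictions, which follows formally from naturality once the tower is set up as above.
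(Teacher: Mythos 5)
The paper does not prove this theorem: it is quoted and attributed to Schoen--Yau \cite{SY2017} and used as a black box, so there is no in-paper proof to compare against. Your proposal is a correct sketch of the Schoen--Yau minimal-hypersurface descent that underlies the cited result, and its structure --- the tower of stable area-minimizers, positivity propagation via the twice-traced Gauss equation plus the stability inequality and the conformal Laplacian, the Gauss--Bonnet step at the $2$-dimensional bottom, the projection-formula bookkeeping ensuring $\iota_*[F]=\sigma_2$, and the ``in particular'' as a formal corollary --- is exactly right. Two minor remarks. First, the conformal Laplacian coefficient on an $(d-1)$-dimensional slice is $4(d-2)/(d-3)$, not $4(d-1)/(d-2)$; this is a typo and does not affect your inequality, since both coefficients exceed $2$ in the range where the conformal argument is used (the surface case $d=3$ is correctly handled by Gauss--Bonnet instead). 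Second, as you acknowledge, for $n\geq 8$ the area-minimizing slices can have singular sets, and controlling those is precisely the technical content of \cite{SY2017} that one must invoke; the paper only ever applies the theorem in dimensions $n\leq 7$, where the slices are smooth and your argument is complete.
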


However, we weaken the homologically non-trivial condition to the homotopically non-trivial one (see examples in \eqref{Eq: homotopy non-trivial example} for essential difference). This turns out to be crucial for our further discussions on the fill-in problem for $2$-torus.

\begin{proof}[Proof for Theorem \ref{Thm: homotopy nontrivial no PSC}]
Denote $\mathbb S^1_i$ to be the $i$-th circle component of $\mathbb T^{n-1}$. Imagine $\mathbb T^{n-1}$ as the quotient space of $n$-cube $I^{n-1}=[-1,1]^{n-1}$ from identifying opposite faces and take $B$ as a small ball centered at the origin. In the following, we use
$$\mathbb S^1_{i_1}\times \mathbb S^1_{i_2}\times\cdots\times \mathbb S^1_{i_k},\quad 1\leq i_1< i_2<\cdots i_k\leq n-1,\quad k\leq n-2,$$
to denote the $k$-torus from the cells of $\partial I^{n-1}$. Clearly, these $k$-toruses can be considered as submanifolds in $(\mathbb T^n,i)\sharp (M_2,\phi_2)$. For convenience, we will denote $M=(\mathbb T^n,i)\sharp (M_2,\phi_2)$ in the following. Since the intersection number of
$$
\mathbb S^1_{i_1}\times \mathbb S^1_{i_2}\times\cdots\times \mathbb S^1_{i_k}\times \mathbb S^1_n\quad\text{and}\quad \mathbb S^1_{j_1}\times \mathbb S^1_{j_2}\times\cdots\times \mathbb S^1_{j_l}
$$
is non-zero when $(i_1,\ldots,i_k,n,j_1,\ldots,j_l)$ is a permutation of $(1,2,\ldots,n)$, the $(k+1)$-torus
$$\mathbb S^1_{i_1}\times \mathbb S^1_{i_2}\times\cdots\times \mathbb S^1_{i_k}\times \mathbb S^1_n$$
represents a non-trivial homology class in $H_{k+1}(M,\mathbf Z)$. In particular, we can take the Poincar\'e dual $\alpha_i$ of $[\mathbb S^1_1\times\cdots\times\hat{\mathbb S}^1_i\times\cdots\times \mathbb S^1_n]$ in $H^1(M,\mathbf Z)$ for $i$ from $1$ to $n-2$, where we use $\hat S^1_i$ to indicate that $\mathbb S^1_i$ is removed. Notice also that
{\begin{equation}\label{Eq: non zero h2 class}
[M]\frown(\alpha_1\smile\alpha_2\smile\cdots\smile\alpha_{n-2})=[\mathbb S^1_{n-1}\times \mathbb S^1_n]\neq 0\in H_2(M,\mathbf Z).
\end{equation}
}
To prove the first part of the theorem, we work on $(M,g)$ with a PSC metric and try to deduce a contradiction. It follows from Theorem \ref{Thm: SYS} that the homology class $[\mathbb S^1_{n-1}\times \mathbb S^1_n]$ can be represented by a union of smooth spheres, say $\Sigma$.
Now we consider the intersection of $\Sigma$ and $S=\mathbb S^1_1\times\cdots\times{\mathbb S}^1_{n-2}\times \mathbb S^1_n$. Without loss of generality, we can assume that they intersect transversely. It is well-known that the intersection $\Sigma\cap S$ is homologous to $\mathbb S^1_n$ in $H_1(S,\mathbf Z)$ (see Lemma 4.2 and relation (3) in \cite{hutchings}). It is clear that $\Sigma\cap S$ is represented by finitely many closed curve
$$
c_i:[0,1]\to S,\quad c_i(0)=c_i(1),\quad i=1,2,\ldots,N.
$$
Fix a point $x_0$ in $\mathbb S^1_n$. From the connectedness of $S$, we can take a path $P_i:[0,1]\to S$ for each $i$ such that $P_i(0)=x_0$ and $P_i(1)=c_i(0)$. Define
$$
c=(P_1c_1P_1^{-1})(P_2c_2P_2^{-1})\cdots(P_Nc_NP_N^{-1}).
$$
Then $c$ represents an element in $\pi_1(S,x_0)$ such that $i_*([c])=[\mathbb S^1_n]$, where $i_*$ denotes the Hurewicz homomorphism
$$i_*:\pi_1(S,x_0)\to H_1(S,\mathbf Z).$$
Notice that $S$ is an $(n-1)$-torus and so $i_*$ is an isomorphism, which implies that $c$ is homotopic to $\mathbb S^1_n$. Since each component of $\Sigma_{n-2}$ is a sphere, each $c_i$ represents a trivial free homotopy class. As a result, the curve $c$ is homotopic to a point in $M$ and so is $\mathbb S^1_n$. Notice that $\mathbb S^1_n$ is homotopic to $\gamma$. However, it comes from { Proposition \ref{Prop: same homotopy local and global}} that $\gamma$ is homotopically non-trivial in $M$, which leads to a contradiction.

The second part of the theorem comes easily from the first one. Given an arbitrary smooth metric $g$ on $(\mathbb T^n,i)\sharp (M_2,\phi_2)$ with non-negative scalar curvature, the non-existence of any PSC metric combined with Ricci flow (or a similar deformation) yields the Ricci-flatness of the metric $g$. Notice that the first Betti number of $M$ is no less than $n-2$ from \eqref{Eq: non zero h2 class}. It follows from \cite[Theorem 3]{CG1971} that the universal covering of $(M,g)$ is the Euclidean space $\mathbb E^n$ and we complete the proof.
\end{proof}

Next we take $i:\Omega\to\mathbb T^n$ to be a tube neighborhood of $\mathbb \mathbb S^1_{n-1}\times \mathbb S^1_n$. We show the following result:
\begin{theorem}\label{2torus}
Denote $\Sigma$ to be a $2$-torus in $\partial \Omega$ such that $\Sigma$ is homotopic to $\mathbb S^1_{n-1}\times\mathbb S^1_n$ in $\mathbb T^n-\Omega$. Suppose that the inclusion map
$i_*:\pi_1(\phi_2(\Sigma))\to \pi_1(M_2-\phi_2(\Omega))$
 is injective for some $(M_2,\phi_2)$ in $\mathcal C_\Omega$.
Then $(\mathbb T^n,i)\sharp (M_2,\phi_2)$ admits no PSC metric for $n\leq 7$. Moreover, if $g$ is a smooth metric on $(\mathbb T^n,i)\sharp (M_2,\phi_2)$ with non-negative scalar curvature, then it is flat.
\end{theorem}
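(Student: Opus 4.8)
The plan is to imitate the proof of Theorem~\ref{Thm: homotopy nontrivial no PSC}, but since the $2$-torus $\Sigma$ may become null-homologous in $M$ after the surgery (so that the Schoen--Yau--Schick theorem, Theorem~\ref{Thm: SYS}, cannot be applied to a $2$-dimensional class), I would carry out the Schoen--Yau dimension reduction by hand and stop it at dimension three, then exploit the intersection of the resulting $3$-manifold with a coordinate subtorus.

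\emph{Setup.} Write $\mathbb T^n=\mathbb T^{n-2}\times\mathbb S^1_{n-1}\times\mathbb S^1_n$, realize $\mathbb T^{n-2}$ as a cube with identified opposite faces, and take $\Omega=B\times\mathbb S^1_{n-1}\times\mathbb S^1_n$ with $B$ a small ball about the origin; set $M=(\mathbb T^n,i)\sharp(M_2,\phi_2)$. For $j=1,\dots,n-2$ let $A_j\subset\mathbb T^n-\Omega\subset M$ be the $(n-1)$-torus cut out by fixing the $j$-th coordinate of $\mathbb T^{n-2}$ at a generic value, and put $\alpha_j=\mathrm{PD}_M[A_j]\in H^1(M,\mathbf Z)$. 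Using the intersection calculus on tori as in the proof of Theorem~\ref{Thm: homotopy nontrivial no PSC} (cf.\ \cite{hutchings}), I would check that
$$
\sigma_3:=[M]\frown(\alpha_1\smile\cdots\smile\alpha_{n-3})=[R^3]\ne 0\in H_3(M,\mathbf Z),
$$
where $R^3=A_1\cap\cdots\cap A_{n-3}$ is a $3$-torus, the nonvanishing following from a nonzero intersection number with the complementary $(n-3)$-torus $\mathbb S^1_1\times\cdots\times\mathbb S^1_{n-3}$ (again disjoint from $\Omega$). Writing $S:=A_{n-2}$, the restriction $i_S^{*}(\alpha_1\smile\cdots\smile\alpha_{n-3})$ is Poincar\'e dual in $S$ to the $2$-torus $\Sigma_R:=R^3\cap S$, which is freely homotopic in $M$ to $\Sigma$ (both being $\{\mathrm{pt}\}\times\mathbb S^1_{n-1}\times\mathbb S^1_n$ in the connected manifold $\mathbb T^n-\Omega$) and whose class is a nonzero decomposable element of $H_2(S,\mathbf Z)\cong\Lambda^2\mathbf Z^{n-1}$. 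Finally, presenting $\pi_1(M)$ by van Kampen as an amalgamated product over $\pi_1(\partial\Omega)$ and using the injectivity of $\pi_1(\phi_2(\Sigma))\to\pi_1(M_2-\phi_2(\Omega))$, I would verify that both $S$ and $\Sigma$ are $\pi_1$-injective in $M$ (the case $n=4$, where $\pi_1(\partial\Omega)\cong\mathbf Z^3$, needing a minor extra argument).

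\emph{Descent and intersection.} Suppose $M$ carries a PSC metric. Since $\sigma_3\ne 0$ and $n\le 7$, the iterated minimal hypersurface construction of Schoen--Yau produces an embedded closed orientable $3$-submanifold $Y^3\subset M$ carrying a metric of positive scalar curvature with $[Y^3]=\sigma_3$ in $H_3(M,\mathbf Z)$; the restriction $n\le 7$ is exactly what guarantees that all the intermediate stable minimal hypersurfaces are smooth and that the descent can be iterated with the restricted classes staying nonzero. As $Y^3$ is a closed PSC $3$-manifold, by \cite{SY1979b} together with geometrization its fundamental group is a free product of finite groups and copies of $\mathbf Z$, so by the Kurosh subgroup theorem every subgroup $G\le\pi_1(Y^3)$ has finite $H_2(G,\mathbf Z)$. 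Perturbing $Y^3$ to meet $S$ transversally, set $\Sigma^{(2)}:=Y^3\cap S$, a closed orientable surface embedded in both $Y^3$ and $S$; by the computation above $[\Sigma^{(2)}]=[\Sigma_R]\ne 0\in H_2(S,\mathbf Z)$.

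\emph{The contradiction, and rigidity.} I would then show that each connected component $F$ of $\Sigma^{(2)}$ is null-homologous in $S$, contradicting $[\Sigma^{(2)}]\ne 0$. Indeed, since $F\subset Y^3\cap S$, the inclusion $F\hookrightarrow M$ factors through $Y^3$; because $\pi_1(S)\hookrightarrow\pi_1(M)$ and $S$ is aspherical, the homotopy class of $F\hookrightarrow S$ is determined by the image of $\pi_1(F)$ in $\pi_1(M)$, which coincides with the image of the group $G\le\pi_1(Y^3)$ generated by $\pi_1(F)$; hence $F\hookrightarrow S$ factors up to homotopy through $BG$, so $H_2(F,\mathbf Z)\to H_2(S,\mathbf Z)$ factors through the finite group $H_2(G,\mathbf Z)$, and since $H_2(S,\mathbf Z)$ is torsion-free the fundamental class of $F$ maps to $0$ (a sphere component being trivially null-homologous). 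Summing over components gives $[\Sigma^{(2)}]=0$, the desired contradiction. I expect this last step to be the main obstacle: making precise the factorization of $F\hookrightarrow S$ through $\pi_1(Y^3)$ and invoking the correct structure result for fundamental groups of PSC $3$-manifolds — this is where the incompressibility hypothesis is genuinely used, via the $\pi_1$-injectivity of $S$ and $\Sigma$ in $M$. For the rigidity statement, the non-existence of PSC metrics together with a Ricci flow (or Bourguignon-type) deformation forces any metric with $R_g\ge 0$ to be Ricci-flat; since $\alpha_1,\dots,\alpha_{n-3}$ are linearly independent in $H^1(M,\mathbf Z)$ we have $b_1(M)\ge n-3$, and as there is no compact simply connected Ricci-flat manifold of dimension $1$, $2$ or $3$, the Cheeger--Gromoll splitting forces $b_1(M)=n$, so the universal cover of $(M,g)$ is $\mathbb E^n$ and $g$ is flat.
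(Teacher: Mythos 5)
Your proposal follows the paper up to the dimension-reduction step: both descend to an embedded orientable PSC $3$-fold $Y^3$ (the paper's $\Sigma_3$) representing $\sigma_3$, and both intersect it transversally with the coordinate $(n-1)$-torus $S=A_{n-2}$ to get a surface class that ought to be nonzero in $H_2(S,\mathbf Z)$ but can be shown to vanish. From there the two arguments diverge genuinely. The paper avoids invoking any structure theory for $\pi_1(Y^3)$: it fixes a component $\Sigma_2'\subset\Sigma_3'\cap S$, minimizes area in its isotopy class inside $\Sigma_3'$ (with the PSC metric coming from the descent), uses Meeks--Simon--Yau convergence (or, when the area degenerates, homological systole estimates) to cut $\Sigma_2'$ along curves $\gamma_s'$ that are null-homotopic in $\Sigma_3'$, and then uses intersection numbers in $M$ (to kill the $\mathbb S^1_1,\dots,\mathbb S^1_{n-3}$ coefficients of $[\gamma_s']$) together with the incompressibility of the $2$-torus $\mathbb S^1_{n-1}\times\mathbb S^1_n$ (to kill the $\mathbb S^1_{n-1},\mathbb S^1_n$ coefficients) to conclude the $\gamma_s'$ are null-homotopic in $S$ as well, whence $[\Sigma_2']=0$ in $H_2(S,\mathbf Z)$. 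Your route instead uses the Gromov--Lawson / Perelman classification of closed orientable PSC $3$-manifolds and the Kurosh subgroup theorem to see that $H_2(G,\mathbf Z)$ is torsion for every finitely generated $G\le\pi_1(Y^3)$, and then exploits the asphericity of $S$ and of each non-sphere component $F$ to factor $F\to S$ up to homotopy through $BG$, concluding $[F]=0$ since $H_2(S,\mathbf Z)$ is torsion-free. Conceptually this is a cleaner packaging of the same idea, but it is a heavier route (you use geometrization/Poincar\'e, which the paper avoids) and it needs a strictly stronger input than the paper does.

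The stronger input is exactly where I see a real gap: you need $\pi_1(S)\hookrightarrow\pi_1(M)$, whereas the paper only needs the $2$-torus $\mathbb S^1_{n-1}\times\mathbb S^1_n$ to be incompressible in $M$. For $n\ge 5$, $\pi_1(\mathbb T^n-\Omega)\cong\mathbf Z^n$ and the van Kampen edge group is $\pi_1(\partial\Omega)=\mathbf Z^2$; both inclusions of the edge group into the vertex groups are injective (the $M_2$-side by hypothesis), so $\pi_1(\mathbb T^n-\Omega)$ and hence $\pi_1(S)$ inject into $\pi_1(M)$, and your argument closes. For $n=4$, however, $\partial\Omega\cong\mathbb T^3$ and the edge group is $\mathbf Z^3$, of which the hypothesis only controls the $\mathbf Z^2$ corresponding to $\phi_2(\Sigma)$: the meridian $\partial B$ may die (or become torsion, or satisfy unforeseen relations) in $\pi_1(M_2-\phi_2(\Omega))$, in which case the edge group does not inject on that side, the normal-form/Bass--Serre guarantee that vertex groups inject fails, and $\pi_1(S)=\langle a_1,t_3,t_4\rangle\to\pi_1(M)$ is no longer automatically injective. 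You flag this as a ``minor extra argument,'' but without a concrete fix it is a genuine gap, and it is not a small one: the factorization of $F\hookrightarrow S$ through $BG$ is precisely where $\pi_1(S)\hookrightarrow\pi_1(M)$ enters, and nothing else in your proof substitutes for it. If you could weaken your final step so that it only uses incompressibility of the $2$-torus $\mathbb S^1_{n-1}\times\mathbb S^1_n$ (as the paper does, by controlling the individual $H_1(S,\mathbf Z)$ coefficients of the cutting curves via intersection numbers), the $n=4$ case would go through. As written, I would not accept the proof for $n=4$. The rigidity paragraph is fine and agrees with the paper's: Ricci-flatness plus $b_1(M)\ge n-3$ and the Cheeger--Gromoll structure theorem force $b_1(M)=n$, hence flatness, once one rules out a compact simply connected Ricci-flat factor of dimension $\le 3$.
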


\begin{proof}
As before, we take $\alpha_i$ to be the Poincar\'e dual of the homology class $[\mathbb S^1_1\times\cdots\times\hat{\mathbb S}^1_i\times\cdots\times \mathbb S^1_n]$ in $H^1(M,\mathbf Z)$ for $i$ from $1$ to $n-3$. Clearly,
$$
[M]\frown(\alpha_1\smile\cdots\smile\alpha_{n-3})=[\mathbb S^1_{n-2}\times \mathbb S^1_{n-1}\times \mathbb S^1_n]\neq 0\in H_3(M,\mathbf Z).
$$
Based on the dimension reduction argument behind Theorem \ref{Thm: SYS}, we can find an orientable embeded submanifold $\Sigma_3$ with interger multiplicity homologous to $[\mathbb S^1_{n-2}\times \mathbb S^1_{n-1}\times \mathbb S^1_n]$, which admits a PSC metric. As a result, $\Sigma_3$ has a non-empty intersection with $S=\mathbb S^1_1\times\cdots\times{\mathbb S}^1_{n-3}\times \mathbb S^1_{n-1}\times \mathbb S^1_n$. Without loss of generality, we can assume that they intersects transversely. So the intersection is an orientable embedded surface with integer multiplicity, denoted by $\Sigma_2$. Let us work on any connected component $\Sigma_2'$ of $\Sigma_2$. Of course, it is contained in some connected component $\Sigma_3'$ of $\Sigma_3$. We are going to show that $\Sigma_2'$ can be expressed as the sum of some $2$-spheres as cycles in $S$. If $\Sigma_2'$ itself is a $2$-sphere, then no more work need to be done. Otherwise, we minimize the area functional in the isotopy class of $\Sigma_2'$ in $\Sigma_3'$ with respect to a chosen PSC metric. There are two cases:
\begin{itemize}
\item[(i)] The area of surfaces in the isotopy class of $\Sigma_2'$ has a uniform positive lower bound. In this case, we take a minimizing sequence $\Sigma_{2,k}'$ in $\Sigma_3'$. From \cite{MSY1982} we have
$$
\Sigma_{2,k}'\to n_1S_1+n_2S_2+\cdots+n_pS_p
$$
in the sense of varifold, where each $S_j$ is a smooth embedded minimal surface satisfies
$$
\int_{S_j}(|A|^2+\Ric(\nu,\nu))\phi^2\,\mathrm d\sigma\leq \int_{S_j}|\nabla\phi|^2\,\mathrm d\sigma.
$$
Here $\nu$ is a unit normal vector field (allowed to be discontinuous) of $S_j$ in $\Sigma_3'$. Since $\Sigma_3'$ has PSC, the standard Schoen-Yau's trick yields that $S_j$ is a projective plane or a $2$-sphere. It follows from Remark 2 on page 625 and  Remark 3.27 on page 635 in \cite{MSY1982} that $\Sigma_2'$ is diffeomorphic to the ``connected sum'' of several spheres in $\Sigma_3'$. Here we abuse the concept of connected sum to mean that spheres are connected by several thin necks. In particular, there are disjoint simple closed curves $\gamma_1',\ldots,\gamma_N'$ on $\Sigma_2'$ such that each $\gamma_s'$ is homotopic to a point in $\Sigma_3'$ and the complement
$\Sigma_2'-\cup_{s=1}^N\gamma_s'$ is a union of punctured $2$-spheres. The situation is illustrated by following figure.
\begin{figure}[htbp]
\centering
\includegraphics[width=7cm]{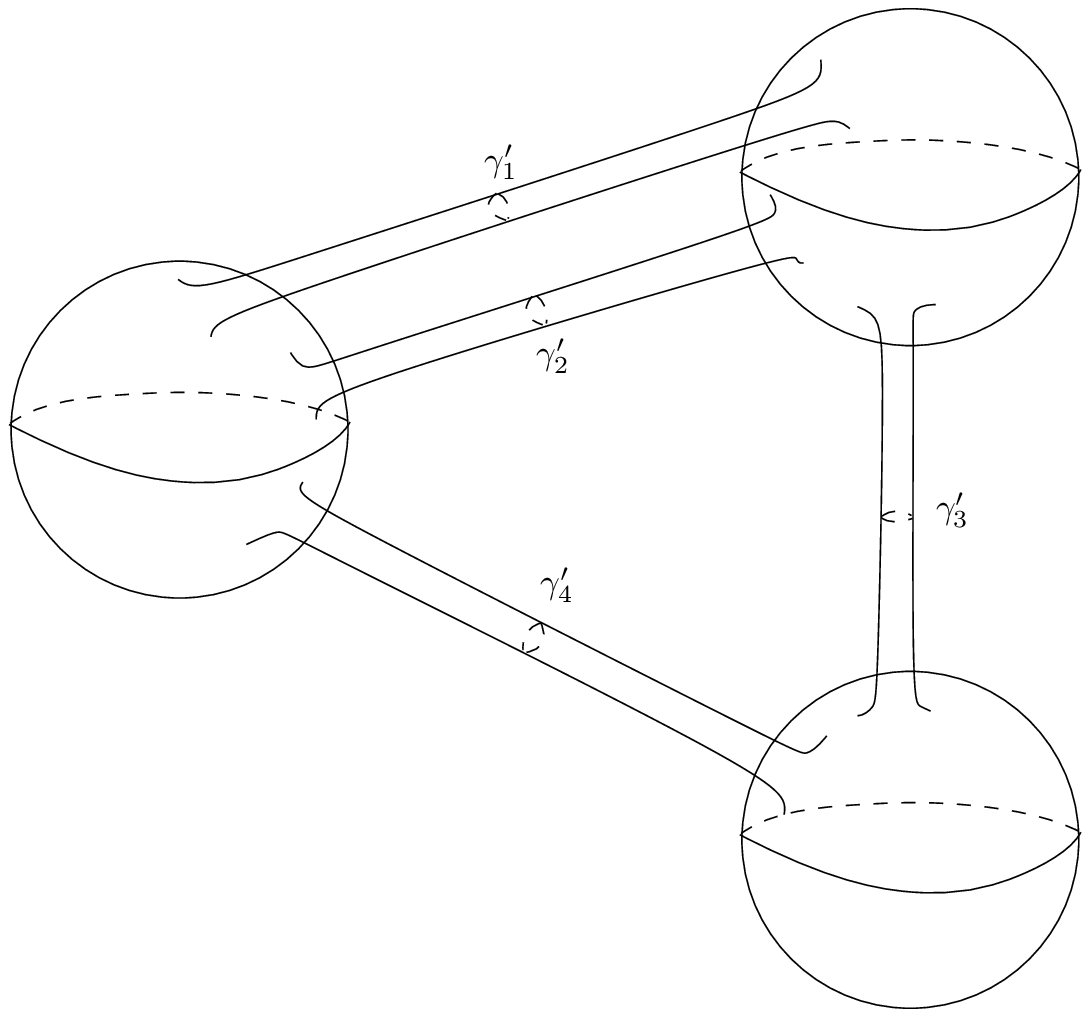}
\end{figure}
\item[(ii)] The area of surfaces in the isotopy class of $\Sigma_2'$ can be arbitrarily small. From the homological systole estimate for closed surfaces (see \cite[P71]{Pu1952} and \cite[P301]{Gromov1996}), we can deduce the following consequence: if an orientable closed surface $\Sigma_h$ with genus $h$ has area $A_0$, then we can find $h$ disjoint simple closed curves $\gamma_1,\ldots,\gamma_h$ on $\Sigma_h$ with length no greater than $C(h)A_0^{1/2}$ such that $\Sigma_h-\cup_{s=1}^h\gamma_h$ is a punctured $2$-sphere. In particular, a closed surface $\Sigma_2''$ can be chosen in the isotopy class of $\Sigma_2'$ such that the corresponding curves $\gamma_1'',\ldots,\gamma_h''$ have lengths no greater than the convex radius of $\Sigma_3'$. This implies that $\gamma_s''$ is homotopically trivial in $\Sigma_3'$ and so there are several simple closed curves $\gamma_1',\ldots,\gamma_h'$ on $\Sigma_2'$ homotopic to a point in $\Sigma_3'$ such that $\Sigma_2'-\cup_{s=1}^h\gamma_s'$ is a punctured $2$-sphere. The situation is shown as following
\begin{figure}[htbp]
\centering
\includegraphics[width=11cm]{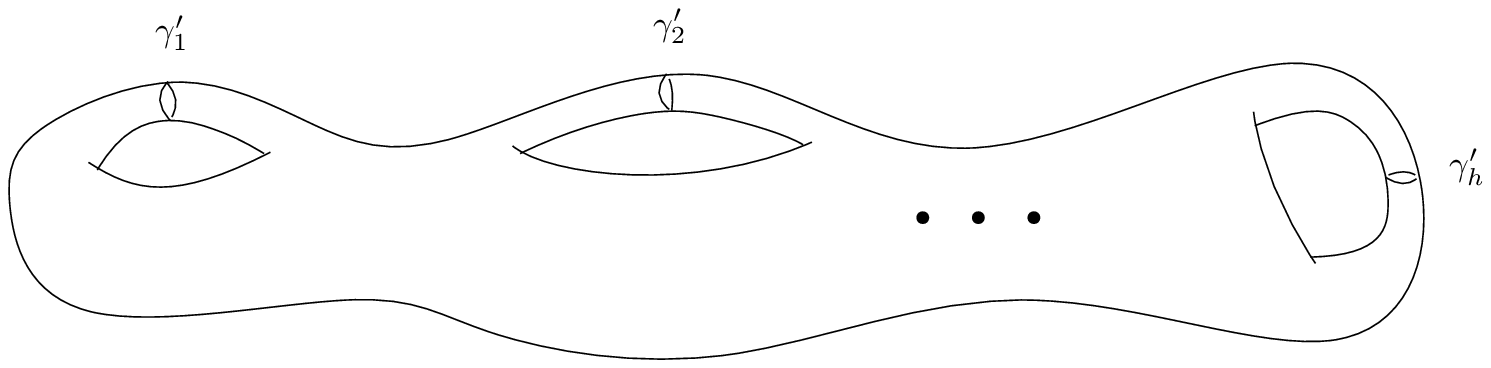}
\end{figure}
\end{itemize}
Now we show that each closed curve $\gamma_s'$ is also homotopic to a point in $S$. Do decomposition
$$
[\gamma_s']=c_1[\mathbb S^1_1]+\cdots c_{n-3}\mathbb [S^1_{n-3}]+c_{n-1}[\mathbb S^1_{n-1}]+c_n[\mathbb S^1_n]\quad\text{in}\quad H_1(S,\mathbf Z).
$$
Since $\gamma_s'$ is homotopic to a point in $\Sigma_3'$, it is homologous to zero in $M$. So
\begin{equation*}
c_1= [\gamma_s']\cdot [\mathbb S^1_2\times\cdot\times\mathbb S^1_n]=0,
\end{equation*}
and the same thing holds for $c_2,\ldots,c_{n-3}$. From Proposition \ref{Prop: same homotopy local and global} it follows that the $2$-torus $\mathbb S^1_{n-1}\times \mathbb S^1_n$ is incompressible in $M$. Since the Hurewicz homomorphism gives an isomorphism between $H_1(S,\mathbf Z)$ and $\pi_1(S)$, we have $c_{n-1}=c_n=0$ and so $\gamma'_s$ is homotopic to a point in $S$. By adding caps with inverse orientations in $S$, $\Sigma_2'$ can be written as a sum of spherical cycles in $S$. From $\pi_2(S)=0$ we know that $\Sigma_2$ is homologous to zero in $S$. On the other hand, $\Sigma_2$ is homologous to $\mathbb S^1_{n-1}\times \mathbb S^1_n$ in $S$, which is homologically non-trivial in $S$. So we obtain a contradiction.
The rigidity part follows from a similar argument as in the proof of Theorem \ref{Thm: homotopy nontrivial no PSC}.
\end{proof}

\section{Positive mass theorem for ALF\&ALG manifolds}\label{Sec: PMT}
Next, we give an application of Theorem \ref{Thm: homotopy nontrivial no PSC} and Theorem \ref{2torus}, i.e. to show the positive mass theorem of special ALF\&ALG manifolds. The ALF\&ALG we mean in this paper are defined blew.

\begin{definition}\label{ALF}
Let $(M^n,g)$ be a complete and noncompact Riemannian manifold, and it is ALF with asymptotic order $\mu$ if it satisfies:
\begin{itemize}
	\item There is a compact set $K\subset M$ so that $M\setminus K$ is diffeomorphic to $(\mathbb R^{n-1}\setminus \mathbb B^{n-1}(r) )\times \mathbb S^1$, $\mathbb B^{n-1}(r)$ is a ball in $\mathbb{R}^{n-1}$ with radius $r$;
	\item  when $n=3$, $g=dr^2 +\beta^2 r^2 d\phi^2 +l^{2}d\theta^2 + \sigma$ on $M\setminus K$, $\beta\in\mathbb{R}^{+}$, $l\in\mathbb{R}^{+}$, $(r, \phi)$ is the  polar coordinates on $\mathbb R^2$, $\theta$ is the standard coordinate on $\mathbb S^1$;
    \item  when $n\geqslant 4$, $g=(1+\frac{m}{2r^{n-3}})^{\frac{4}{n-3}}dx^{2} +l^{2}d\theta^2 + \sigma$ on $M\setminus K$, $m\in\mathbb{R}$, $l\in\mathbb{R}^{+}$, $x=\{x^{i}\}_{i=1}^{n-1}$ is the standard cartesian coordinates on $\mathbb{R}^{n-1}$, $\theta$ is the standard coordinate on $\mathbb S^1$;
	\item $\sigma$ is the error term, satisfy $\sum\limits^2_{k=0}\sum\limits_{|\alpha|=k}r^k|\partial^\alpha \sigma| =O(r^{-\mu})$, $\mu>n-3$, as $r\rightarrow \infty$, $r=|x|$, $|\cdot|$ is the Euclidean norm, $\partial$ is $\partial_{i}$ or $\partial_{\theta}$, $i=1,\cdots ,n-1$.
	\end{itemize}

And $\mathbb{S}^1$  called the $\mathbb S^1$-factor at the infinity of $M$.
\end{definition}

\begin{definition}\label{ALG}
Let $(M^n,g)$ be a complete and noncompact Riemannian manifold, and it is ALG with asymptotic order $\mu$ if it satisfies:
\begin{itemize}
	\item There is a compact set $K\subset M$ so that $M\setminus K$ is diffeomorphic to $(\mathbb R^{n-2}\setminus \mathbb B^{n-2}(r) )\times \mathbb T^2$, $\mathbb B^{n-2}(r)$ is a ball in $\mathbb{R}^{n-2}$ with radius $r$;
	\item  when $n=4$, $g=dr^2 +\beta^2 r^2 d\phi^2 +l^{2}d\gamma^2 +\sigma$ on $M\setminus K$, $\beta\in\mathbb{R}^{+}$, $l\in\mathbb{R}^{+}$, $(r, \phi)$ is the  polar coordinates on $\mathbb R^2$, $d\gamma^2$ is flat metric on $\mathbb{T}^{2}$ with area equals to $4\pi^{2}$;
    \item  when $n>4$, $g=(1+\frac{m}{2r^{n-4}})^{\frac{4}{n-4}}dx^{2} +l^{2}d\gamma^2 +\sigma$ on $M\setminus K$, $m\in\mathbb{R}$, $l\in\mathbb{R}^{+}$, $x=\{x^{i}\}_{i=1}^{n-1}$ is the standard cartesian coordinates on $\mathbb{R}^{n-2}$, $d\gamma^2$ is a flat metric on $\mathbb{T}^{2}$ with area equals to $4\pi^{2}$;
	\item $\sigma$ is the error term, satisfy $\sum\limits^2_{k=0}\sum\limits_{|\alpha|=k}r^k|\partial^\alpha \sigma|=O(r^{-\mu})$, $\mu>n-4$, as $r\rightarrow \infty$, $r=|x|$, $|\cdot|$ is the Euclidean norm, $\partial$ is $\partial_{i}$ or $\partial_{\gamma^{j}}$, $i=1,\cdots, n-2$, $j=1,2$, $\{\gamma^{j}\}_{j=1}^{2}$ is the local coordinates of $\mathbb{T}^{2}$.
	\end{itemize}
And $\mathbb{T}^2$ called the $\mathbb T^2$-factor at the infinity of $M$.

\end{definition}

We need to assume the order of $\sigma$ in our theorems, for convenience,we define
$\mu$-\emph{condition} as following: if $(M^n,g)$ is ALF ,then $ \mu>1 $ when $n=3$, $\mu>2n-6$ when $n\geqslant 4$; if $(M^n,g)$ is ALG, then $\mu > 1$ when $n=4$, $\mu>2n-8$ when $n\geqslant 5$.

Now we state our main Theorems:

\begin{theorem}\label{ALFPMT}
Let $(M^n,g)$ be an ALF manifold with scalar curvature $R_g\geqslant 0$ and $\mu$-condition. If $n\leqslant 7 $ and its $\mathbb S^1$-factor at the infinity represents a non-trivial element in $\pi_1(M^{n})$, then

\begin{center}
$\beta\leqslant 1$, if $n=3$ ;  $m\geqslant 0$, if $n\geqslant 4$
\end{center}

moreover, if we assume $\sum\limits^4_{k=3}\sum\limits_{|\alpha|=k}r^k|\partial^\alpha \sigma|=O(r^{-\mu})$, then if $\beta=1$, $M^{3}$ is isometric to flat $\mathbb{R}^{2}\times \mathbb{S}^{1}$, with $\mathbb{S}^{1}$ length $2\pi l$; if $m=0$, $M^{n}$ is isometric to flat $\mathbb{R}^{n-1}\times \mathbb{S}^{1}$, with $\mathbb{S}^{1}$ length $2\pi l$.

\end{theorem}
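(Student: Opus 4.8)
The plan is to argue by contradiction and reduce the desired inequality to the non-existence statement of Theorem \ref{Thm: homotopy nontrivial no PSC}. Assume the conclusion fails, that is, $m<0$ when $n\geq 4$ or $\beta>1$ when $n=3$, and fix a large radius $r_0$; write $C=\{r\leq r_0\}$ for the compact core, so that $\partial C\cong\mathbb S^{n-2}\times\mathbb S^1$ carries a metric close to $r_0^2 g_{\mathbb S^{n-2}}+l^2\,\mathrm d\theta^2$. The first observation is that the failure hypothesis is equivalent to a strict mean curvature excess on the coordinate slices over the flat model $\mathbb R^{n-1}\times\mathbb S^1(l)$: after the radial reparametrisation $\rho=(1+\tfrac{m}{2r^{n-3}})^{2/(n-3)}r$ the metric near infinity becomes $u_0^2\,\mathrm d\rho^2+\rho^2 g_{\mathbb S^{n-2}}+l^2\,\mathrm d\theta^2$ up to terms governed by $\sigma$, with $u_0=1+\tfrac{m}{\rho^{n-3}}+\cdots<1$, so the slice $\{\rho=\text{const}\}$ has mean curvature $\tfrac{n-2}{u_0\rho}>\tfrac{n-2}{\rho}$ (in dimension three the slices are flat $2$-tori and the total mean curvature $4\pi^2\beta l$ exceeds the flat value $4\pi^2 l$). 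Here the $\mu$-condition ($\mu>2n-6$ for $n\geq4$, $\mu>1$ for $n=3$) is exactly what makes $\sigma$ of strictly lower order than the mass (resp. the conical defect), so it cannot spoil this comparison; this is also the feature that rules out the behaviour of Example \ref{NTmetric}, where the $\mathbb S^1$-factor becomes contractible.

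Next I would replace $g$ on $\{r\geq r_0\}$ by a quasi-spherical metric adapted to the $\mathbb S^1$-factor, namely $\bar g=u^2\,\mathrm d\rho^2+\rho^2 g_{\mathbb S^{n-2}}+l^2\,\mathrm d\theta^2$ (a foliation by flat $2$-tori in dimension three), solving $R_{\bar g}=0$ on $[\rho_0,\infty)$. Since the extra flat circle does not contribute to the scalar curvature, this is essentially the Shi--Tam quasi-spherical equation of \cite{Bartnik1993} and \cite{ST2002}, a parabolic equation for $u$ in $\rho$ (degenerate in dimension three), which I would solve with boundary data at $\rho_0$ matching that of $\partial C$ in $(M,g)$. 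The strict excess above forces the solution to obey $u<1$ with $u\to1$ as $\rho\to\infty$ (of negative-mass Schwarzschild type to leading order); truncating at a large $\rho_1$ and attaching the exact flat product $\mathbb R^{n-1}\times\mathbb S^1(l)$ beyond $\rho_1$, the mean curvature jumps at both $\partial C$ and $\Sigma_{\rho_1}$ have the favourable sign. By Miao's gluing of metrics with corners \cite{Miao2002}, the resulting manifold $\tilde M$ is complete, coincides with $\mathbb R^{n-1}\times\mathbb S^1(l)$ outside a compact set, has non-negative scalar curvature in the distributional sense with a strictly positive contribution from the corner at $\Sigma_{\rho_1}$, and after mollification carries a smooth metric with $R\geq0$ that is positive somewhere; the homotopy class of the $\mathbb S^1$-factor is unchanged by the surgery.

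Now I would compactify. Choosing $L\gg\rho_1$, the flat region $([-L,L]^{n-1}-\mathbb B_{\rho_1})\times\mathbb S^1(l)\subset\tilde M$ together with the identification of opposite faces of the cube produces a closed manifold
$$
\tilde M_L=\bigl(\mathbb T^{n-1}-\mathbb B_{\rho_1}\bigr)\times\mathbb S^1(l)\ \cup\ C,
$$
glued along $\partial C$, which is precisely of the form $(\mathbb T^n,i)\sharp(M_2,\phi_2)$ with $i:\Omega\to\mathbb T^n=\mathbb T^{n-1}\times\mathbb S^1$ a tube neighbourhood of $\mathbb S^1_n$, $M_2=C\cup(\mathbb D^{n-1}\times\mathbb S^1)$ and $M_2-\phi_2(\Omega)=C$. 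Taking $\gamma=\{\mathrm{pt}\}\times\mathbb S^1\subset\partial\Omega$, which is homotopic to $\mathbb S^1_n$ in $\mathbb T^n-\Omega$, the curve $\phi_2(\gamma)$ is the $\mathbb S^1$-factor at infinity of $M$ viewed inside $C$; since deleting the product end $(\mathbb R^{n-1}-\mathbb B)\times\mathbb S^1$ does not change the fundamental group (Proposition \ref{Prop: same homotopy local and global}), the hypothesis that this factor is non-trivial in $\pi_1(M)$ shows that $\phi_2(\gamma)$ is homotopically non-trivial in $M_2-\phi_2(\Omega)$. Theorem \ref{Thm: homotopy nontrivial no PSC} then rules out any metric of positive scalar curvature on $\tilde M_L$, contradicting the previous paragraph; hence $m\geq0$ (resp. $\beta\leq1$). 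For the rigidity part, assume $m=0$ (resp. $\beta=1$); if $R_g$ were positive somewhere, the construction above for some $r_0$ would again produce a metric of non-negative, somewhere positive, scalar curvature on $\tilde M_L$, which is impossible, so $R_g\equiv0$. The stronger decay $\sum_{k=3}^4 r^k|\partial^\alpha\sigma|=O(r^{-\mu})$ guarantees that the quasi-spherical collar has vanishing mass and hence, by the Shi--Tam rigidity, is exactly the flat product beyond a finite radius, so $\tilde M_L$ is a genuine smooth closed manifold with $R\equiv0$; the rigidity part of Theorem \ref{Thm: homotopy nontrivial no PSC} then forces it to be flat, whence $(M,g)$ is flat and, by its asymptotic structure, isometric to $\mathbb R^{n-1}\times\mathbb S^1(l)$ (resp. $\mathbb R^2\times\mathbb S^1(l)$) with the circle of length $2\pi l$.

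The hard part is the quasi-spherical step: conformal deformation, which does the job in the asymptotically flat setting, no longer preserves the ALF structure, so the scalar-flat filling must be produced by solving the quasi-spherical equation on an infinite interval and controlling its solution to an order beating the mass term — this is where the $\mu$-condition is essential — and then matching it to the flat product; in dimension three this equation degenerates, which requires extra care in the radial direction. The borderline case $m=0$ (resp. $\beta=1$) of the rigidity statement, where one has to rule out any corner surviving in the compactification, is the other delicate point and is the reason for the stronger decay hypothesis on $\sigma$.
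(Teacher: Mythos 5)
Your overall strategy for the inequality matches the paper's: modify the metric near infinity to a scalar-flat quasi-spherical collar terminating in the exact flat product $\mathbb R^{n-1}\times\mathbb S^1(l)$, glue the faces of a large cube to obtain a compact manifold of the form $(\mathbb T^n,i)\sharp(M_2,\phi_2)$ with $M_2-\phi_2(\Omega)$ the compact core, verify the homotopy hypothesis via Proposition \ref{Prop: same homotopy local and global}, and invoke Theorem \ref{Thm: homotopy nontrivial no PSC} together with Miao's mollification. However, there are two concrete gaps.

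First, the quasi-spherical collar as you set it up, $\bar g = u^2\,\mathrm d\rho^2 + \rho^2 g_{\mathbb S^{n-2}} + l^2\,\mathrm d\theta^2$, has exact round $\times$ flat tangential slices, whereas the induced metric of $\partial\Omega_{r_0}$ in $(M,g)$ carries the additional error $\sigma_{r_0} = O(r_0^{-\mu})$. This gives a $C^0$ \emph{mismatch} of the tangential metrics at the inner junction, not merely a jump in mean curvature, and Miao's gluing requires the tangential metrics to agree (only the normal derivative may jump). The paper deals with this by interpolating, putting $\gamma_r = \beta^2r^2\mathrm d\phi^2 + \mathrm d\theta^2 + [1-\tfrac{1}{\lambda}(r-r_0)]\sigma_r$ inside the collar of width $\lambda$ (with $\lambda=r_0^{-\epsilon}$ for $n\geq4$, so that the extra terms in the quasi-spherical equation stay subleading to the mass), and matching the mean curvature only, so that at $r_0$ the metric is $C^0$ and at $r_0+\lambda$ the interpolated $\sigma$ has vanished and the slice is exactly flat. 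Without such interpolation (or an equivalent device), your ``matching boundary data at $\rho_0$'' only fixes $u_0$ but cannot erase the $\sigma$-discrepancy, and the gluing fails. Also, your infinite-interval $[\rho_0,\infty)$ filling followed by truncation is reasonable in spirit, but once the interpolation of $\sigma$ is introduced the paper's short finite collar is both necessary and sufficient, and you should not expect $u$ to be globally controlled without that interpolation.

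Second, the rigidity argument does not go through as you wrote it. When $m=0$ (or $\beta=1$) there is no strict mean-curvature excess, so ``the construction above'' produces no favorable corner at the outer junction, and you cannot conclude that $R_g>0$ somewhere feeds a strictly positive contribution into $\tilde M_L$. The paper's proof of $R_g\equiv0$ (Proposition \ref{ALFRigidity1}) is more involved: it performs a conformal deformation $g_{r_1}=u_{r_1}^{4/(n-2)}g$, where $u_{r_1}$ solves a Dirichlet problem on $\Omega_{r_1}$ using the positive bump of $R_g$, and then uses barrier estimates (Lemma \ref{estimateBVP1}--\ref{estimateBVP2}) to bound $\partial_N u_{r_1}$ from below by $\delta r_1^{-(n-2+\tau)}$ (respectively $\delta/(r_1\log r_1)$ in dimension three), which is exactly what manufactures the strict excess needed for a favorable corner. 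Moreover, the stronger decay $\sum_{k=3}^4 r^k|\partial^\alpha\sigma|=O(r^{-\mu})$ is \emph{not} used to make the quasi-spherical collar exactly flat; it is used in Lemma \ref{RicciflowkeepALF} to show that Ricci flow preserves the ALF structure with $m=0$ (resp. $\beta=1$), which is the mechanism by which the paper upgrades $R_g\equiv0$ to $\mathrm{Ric}_g\equiv0$. Finally, to conclude that $(M,g)$ is isometric to the flat product, the paper appeals to Minerbe's parallel-forms argument \cite{minerbe2008a}, not to the rigidity part of Theorem \ref{Thm: homotopy nontrivial no PSC} applied to the compactification $\tilde M_L$ (which would only yield information about $\tilde M_L$, not about $(M,g)$ itself).
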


\begin{theorem}\label{ALGPMT}
Let $(M^n,g)$ be an ALG manifold with scalar curvature $R_g\geqslant 0$ and $\mu$-condition. If $n\leqslant 7 $ and $i:\mathbb{T}^{2}\rightarrow M^{n}$ be the inclusion, its induced  map $i_{\ast}:\pi_{1}(\mathbb{T}^{2})\rightarrow \pi_{1}(M^{n})$ is injective, then

\begin{center}
$\beta\leqslant 1$, if $n=4$ ;  $m\geqslant 0$, if $n\geqslant 5$
\end{center}

moreover, if we assume $\sum\limits^4_{k=3}\sum\limits_{|\alpha|=k}r^k|\partial^\alpha \sigma|=O(r^{-\mu})$,  then if $\beta=1$, $M^{4}$ is isometric to flat $\mathbb{R}^{2}\times \mathbb{T}^{2}$, with $\mathbb{T}^{2}$ area $4\pi^{2} l^{2}$; if $m=0$, $M^{n}$ is isometric to flat $\mathbb{R}^{n-2}\times \mathbb{T}^{2}$, with $\mathbb{T}^{2}$ area $4\pi^{2} l^{2}$.

\end{theorem}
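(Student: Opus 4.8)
The plan is to deduce Theorem \ref{ALGPMT} from the non-existence result Theorem \ref{2torus} by the same scheme as in the ALF case (Theorem \ref{ALFPMT}); I describe the ALG case, since the $\mathbb{T}^2$-factor at infinity is what requires care. Arguing by contradiction, suppose $m<0$ (for $n\geq 5$) or $\beta>1$ (for $n=4$). The first and central step is to replace $(M,g)$ outside a large compact set by an \emph{exact} flat product: I want a Riemannian manifold $(\hat M_\infty,\hat g)$ with $R_{\hat g}\geq 0$ in the distributional sense of \cite{Miao2002}, agreeing with $(M,g)$ on a compact core and with $(\mathbb{R}^{n-2}\setminus\mathbb{B})\times\mathbb{T}^2$ (flat, the $\mathbb{T}^2$ scaled by $l$) near infinity. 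Since conformal deformations do not preserve the ALG model, I would do this with quasi-spherical metrics (\cite{Bartnik1993},\cite{ST2002}): foliate the end by the Euclidean spheres $\mathbb{S}^{n-3}(r)\times\mathbb{T}^2$, look for $\hat g=u^2\,\mathrm dr^2+r^2 g_{\mathbb{S}^{n-3}}+l^2\,\mathrm d\gamma^2$ on a collar $[r_0,\infty)$ with $u$ solving the quasi-spherical (prescribed scalar curvature) equation subject to $R_{\hat g}\geq 0$, $u$ matching the original metric at $r=r_0$ and $u\to 1$ at infinity, then cut off to the flat product and smooth the resulting corners by \cite{Miao2002} using the mean-curvature inequalities built into the construction (for $n=4$ the analogous construction closes up the conical end). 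Because the $\mathbb{T}^2$-directions (and, for $n=4$, the conical factor) do not dilate, the quasi-spherical equation is a \emph{degenerate} parabolic equation in the radial variable; I would remove the degeneracy by lifting to the universal cover of the $\mathbb{T}^2$-factor, solve there, and descend. The assumptions $m<0$ (resp.\ $\beta>1$), the $\mu$-condition, and the extra decay on $\sigma$ are exactly what make the barriers have the right sign, so that the solution exists on all of $[r_0,\infty)$ and a region of strictly positive scalar curvature is produced along the way.

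Next I compactify. For $r_1$ large, the region $\{r\leq r_1\}$ of $\hat M_\infty$ is a compact manifold $\Omega'$ with $R\geq 0$ whose boundary is isometric to $\mathbb{S}^{n-3}(r_1)\times\mathbb{T}^2$; capping it with $\mathbb{B}^{n-2}(r_1)\times\mathbb{T}^2$ yields a closed manifold $M_2$ carrying an embedded tube neighborhood $\phi_2:\Omega=\mathbb{B}^{n-2}\times\mathbb{T}^2\hookrightarrow M_2$ of $\{0\}\times\mathbb{T}^2$. Letting $i:\Omega\to\mathbb{T}^n$ be a tube neighborhood of $\mathbb{S}^1_{n-1}\times\mathbb{S}^1_n$ in a large flat $\mathbb{T}^n=(\mathbb{R}^{n-2}/L\mathbf{Z}^{n-2})\times\mathbb{T}^2$, the surgered manifold $(\mathbb{T}^n,i)\sharp(M_2,\phi_2)$ is, by construction, the flat torus with its tube replaced by the ALG core glued along the flat product collar; hence it carries a metric with $R\geq 0$ which is strictly positive somewhere, so after one more \cite{Miao2002} smoothing it carries a genuine PSC metric. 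To apply Theorem \ref{2torus} I must check its hypotheses: the torus factor of $\partial\Omega$ is homotopic to $\mathbb{S}^1_{n-1}\times\mathbb{S}^1_n$ in $\mathbb{T}^n\setminus\Omega$, which is clear; and $i_*:\pi_1(\phi_2(\partial\Omega))\to\pi_1(M_2\setminus\phi_2(\Omega))$ is injective. The latter holds because $M_2\setminus\phi_2(\Omega)$ is homotopy equivalent to $M$ with a tube around the end removed, so the relevant $\mathbb{T}^2$ is homotopic to the $\mathbb{T}^2$-factor at infinity, and Proposition \ref{Prop: same homotopy local and global} together with the injectivity of $i_*:\pi_1(\mathbb{T}^2)\to\pi_1(M)$ gives what is needed. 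Theorem \ref{2torus} then yields a contradiction, so $m\geq 0$ (resp.\ $\beta\leq 1$).

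For the rigidity statement, assume $m=0$ (resp.\ $\beta=1$) and the extra decay on $\sigma$. Running the construction above with $m=0$ produces, on the same surgered manifold $\hat M:=(\mathbb{T}^n,i)\sharp(M_2,\phi_2)$, a metric with $R\geq 0$ (no strict positivity now), so the rigidity clause of Theorem \ref{2torus} forces it to be flat; tracing this back through the (now necessarily trivial) quasi-spherical modification shows $(M,g)$ is scalar flat with an end that is the exact flat product. To upgrade this to flatness and fix the isometry type, I would run a short-time Ricci flow from $g$, which preserves the ALG asymptotics; if $g$ were not Ricci flat the flow would instantly make the scalar curvature positive, contradicting the inequality just proved (applied along the flow). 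Hence $g$ is Ricci flat, and since $b_1(\hat M)\geq n-2$ (from the nontrivial torus homology class used above) the Cheeger--Gromoll theorem \cite[Theorem 3]{CG1971} splits off $\mathbb{R}^{n-2}$ isometrically from the universal cover; matching with the product structure at infinity identifies $(M,g)$ with flat $\mathbb{R}^{n-2}\times\mathbb{T}^2$, and the model term $l^2\,\mathrm d\gamma^2$ gives the area $4\pi^2 l^2$.

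The main obstacle is the first step: solving the degenerate quasi-spherical equation on the ALG collar and certifying that the modified end is exactly the flat product with everywhere-nonnegative (and somewhere positive) scalar curvature — the non-dilating $\mathbb{T}^2$-factor is precisely what makes this harder than in the asymptotically flat case, and the lifting trick to the $\mathbb{T}^2$-cover is the device I would lean on. Making the Ricci-flow rigidity step rigorous in the noncompact ALG setting is a secondary technical difficulty.
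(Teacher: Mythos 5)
Your inequality argument follows the paper's scheme (Proposition \ref{compactfy1} plus gluing plus Theorem \ref{2torus}), though with a different flavor: you propose to solve the quasi-spherical equation on the whole ray $[r_0,\infty)$ and wait for $u\to 1$, which is closer to the fill-in argument of Section \ref{Sec: fill-in}; the paper instead solves on a short collar $[r_0,r_0+\lambda]$ with an interpolated leaf metric and never needs any long-time analysis, so the ``degenerate parabolic'' concern you flag as the main obstacle simply does not arise in the PMT construction. Both versions lead to the same compactified manifold, and your check of the incompressibility hypothesis of Theorem \ref{2torus} via Proposition \ref{Prop: same homotopy local and global} is correct.

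The rigidity half, however, has a genuine gap. You propose to ``run the construction above with $m=0$'' and invoke the rigidity clause of Theorem \ref{2torus}, but when $m=0$ the computation in the proof of Proposition \ref{compactfy1} gives $H^+_{\tilde g}(r_0+\lambda)-H^-_{\tilde g}(r_0+\lambda)=o\big((r_0+\lambda)^{-(n-2)}\big)$ with \emph{no} definite sign: the term of order $r^{-(n-2)}$ that carries the sign of $-m$ vanishes, and what is left is controlled only by the error tensor $\sigma$ and the quasi-spherical solution $u$, either of which can make $H^+<H^-$. Consequently Miao's mollification need not produce a closed manifold with distributional $R\geq 0$, and the rigidity clause of Theorem \ref{2torus} cannot be applied. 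The paper circumvents this precisely by first \emph{assuming} $R_g(p)>0$ somewhere and running a conformal deformation $g_{r_1}=u_{r_1}^{4/(n-2)}g$ where $u_{r_1}$ solves the elliptic boundary value problem \eqref{u_r}; the comparison barriers in Lemmas \ref{estimateBVP1} and \ref{estimateBVP2} force the strict inequality $H_{g_{r_1}}(r_1)\geq \frac{n-2}{r_1}+\delta r_1^{-(n-2+\tau)}$, which is what survives the quasi-spherical step and beats the $\sigma$-contribution. This conformal step is the missing ingredient in your sketch. Once $R_g\equiv 0$ is established, your Ricci-flow step matches Lemma \ref{RicciflowkeepALF} and Theorem \ref{ALFRigidity2}. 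For the last step the paper does not use Cheeger--Gromoll (which, as \cite[Theorem 3]{CG1971}, is stated for \emph{compact} manifolds and is invoked in the paper only for the closed manifold $\hat M$); instead it extracts $n-2$ parallel one-forms asymptotic to $dx^1,\dots,dx^{n-2}$ following Minerbe \cite{minerbe2008a} and the appendix of \cite{MR2855540} to split off $\mathbb{R}^{n-2}$ isometrically from $M$ itself and identify the remaining flat compact $2$-factor as $\mathbb{T}^2$.
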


We mainly work on the proof of Theorem \ref{ALFPMT}, and divide our proof into inequality part and rigidity part. For ALG situation, proof of inequality part is totally same as that of ALF situation, and the analysis of   Minerbe's work \cite{minerbe2008a} and appendix in \cite{MR2855540} help us  get the rigidity part of Theorem \ref{ALGPMT}

Without loss of generality, we assume $l=1$ for ALF\&ALG manifolds in this section.

\subsection{Proof of inequality part}

We consider an ALF manifold $(M^{n},g)$  with scalar curvature $R_g\geqslant 0$ and $\mu$-condition. In $n=3$ and $n\geqslant 4$, some analysis is different, but idea is same, i.e. if $\beta>1$ or $m<0$, we can construct a new metric $\tilde{g}$ that connects $g$ with flat metric outside a large coordinate ball and keep nonnegative scalar curvature . Smooth $\tilde{g}$ and paste oppsite faces of large coordinate cube, then apply compact result Theorem \ref{Thm: homotopy nontrivial no PSC} and Theorem \ref{2torus}. Now we assume $\beta>1$ when $n=3$, and $m<0$ when $n\geqslant 4$.

Let $\Omega_{r}=(\mathbb{B}^{n-1}(r)\times \mathbb{S}^{1} \cap M^{n})\cup K$, $\partial\Omega_{r}= \mathbb{S}^{n-2}(r)\times \mathbb{S}^{1}$, for $r$ enough large, and define $\sigma_{r}$ as the restriction of $\sigma$ on $\partial\Omega_{r}$. We always assume $r_{0}$ sufficiently large, and $\lambda \leqslant 1$. Now we define a metric $\tilde{g}$  on $M^{n}$:

For $n=3$:

\begin{equation}\label{PSCExtensionmetric1}
\tilde{g}=\left\{
\begin{aligned}
g,& \quad \text{if $r\leqslant r_0$},\\
u^2dr^2+\beta^2
r^2 d\phi^2 +d\theta^2+[1-\frac{1}{\lambda}(r-r_{0})]\sigma_{r}, &\quad \text{if $r_0\leqslant r\leqslant  r_0+\lambda$},\\
\beta^{2}dx^{2}+d\theta^2, & \quad \text{if $r_0+\lambda\leqslant r$}
\end{aligned}
\right.
\end{equation}

For $n\geqslant 4$:

\begin{equation}\label{PSCExtensionmetric2}
\tilde{g}=\left\{
\begin{aligned}
g,& \quad \text{if $r\leqslant r_0$},\\
u^2dr^2+(1+\frac{m}{2r^{n-3}})^{\frac{4}{n-3}}
r^2 d\phi^2 +d\theta^2+[1-\frac{1}{\lambda}(r-r_{0})]\sigma_{r}, &\quad \text{if $r_0\leqslant r\leqslant  r_0+\lambda$},\\
(1+\frac{m}{2(r_{0}+\lambda)^{n-3}})^{\frac{4}{n-3}}dx^{2}+d\theta^2, & \quad \text{if $r_0+\lambda\leqslant r$}
\end{aligned}
\right.
\end{equation}

where $dx^{2}=dr^2+r^2 d\phi^2$, $d\phi^2$ is the standard metric on $\mathbb{S}^{n-2}$, $u$ is a smooth and positive  function on $\Omega_{r_{0}+\lambda}\setminus\Omega_{r_{0}}$. Note that $\tilde g$ is smooth on $M\setminus (\partial\Omega_{r_{0}} \cup \mathbb \partial\Omega_{r_{0}+\lambda})$ and is Lipschitz on $M$.

For convenience, we define

\begin{equation}
\gamma_r=\left\{\begin{aligned}
\beta^2 r^2 d\phi^2 +d\theta^2+[1-\frac{1}{\lambda}(r-r_{0})]\sigma_{r},n=3 \\
(1+\frac{m}{2r^{n-3}})^{\frac{4}{n-3}}r^2 d\phi^2 +d\theta^2+[1-\frac{1}{\lambda}(r-r_{0})]\sigma_{r},n\geqslant 4
\end{aligned}\right.
\end{equation}

Note that $\gamma_r$ defined on $\partial\Omega_{r}$, $r_{0}\leqslant r\leqslant r_{0}+\lambda$,and we define $\bar g=dr^2+\gamma_r $ on $\Omega_{r_{0}+\lambda}\setminus\Omega_{r_{0}}$. $H_{g}(r)$ defined the mean curvature of $g$ in $\partial\Omega_{r}$ respected to outward normal vector, similarly for $H_{\bar{g}}(r)$. And when $\tilde{g}$ is not smooth on $\partial\Omega_{r}$,   $H_{\tilde{g}}^{+}(r)$ defined the mean curvature of $\tilde{g}$ respected to interior metric, $H_{\tilde{g}}^{-}(r)$ respected to exterior metric.

\begin{proposition}\label{compactfy1}
Let $(M^n,g)$ be an ALF manifold with $R_{g}\geqslant 0$ and $\mu$-condition.

If $n=3$, and $\beta>1$, then for $r_{0}$ large enough, and $\lambda=1$, there is a smooth and positive  function $u$  on $\Omega_{r_{0}+\lambda}\setminus\Omega_{r_{0}}$, so that $R_{\tilde{g}}$	is  nonnegative on $M\setminus (\partial\Omega_{r_{0}} \cup \mathbb \partial\Omega_{r_{0}+\lambda})$, and
$$H_{\tilde{g}}^{+}(r_{0})=H_{\tilde{g}}^{-}(r_{0}),H_{\tilde{g}}^{+}(r_{0}+\lambda)>H_{\tilde{g}}^{-}(r_{0}+\lambda)$$

If $n\geqslant 4$, and $m<0$, then for $r_{0}$ large enough, and $\lambda=r_{0}^{-\epsilon}$, $n-4<\epsilon<\mu-(n-2)$, there is a smooth and positive  function $u$  on $\Omega_{r_{0}+\lambda}\setminus\Omega_{r_{0}}$, so that $R_{\tilde{g}}$	is  nonnegative on $M\setminus (\partial\Omega_{r_{0}} \cup \mathbb \partial\Omega_{r_{0}+\lambda})$, and
$$H_{\tilde{g}}^{+}(r_{0})=H_{\tilde{g}}^{-}(r_{0}), H_{\tilde{g}}^{+}(r_{0}+\lambda)>H_{\tilde{g}}^{-}(r_{0}+\lambda)$$

 \end{proposition}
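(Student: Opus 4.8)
The plan is to realise the collar metric $\tilde g=u^2\,\mathrm dr^2+\gamma_r$ on $\Omega_{r_0+\lambda}\setminus\Omega_{r_0}$ as a quasi-spherical metric in the sense of Bartnik and Shi--Tam: the conformal type of the slices $\{r\}\times N$, $N=\mathbb S^{n-2}\times\mathbb S^1$, is already prescribed by $\gamma_r$, so the only unknown is the lapse $u=u(r,\cdot)$. Writing $\bar g=\mathrm dr^2+\gamma_r$ and letting $H_{\bar g}(r)$, $A_0(r)$ be the mean curvature and second fundamental form of $\{r\}\times N$ in $\bar g$, the Gauss equation together with the Riccati equation with lapse give
\[
R_{\tilde g}=R_{\gamma_r}-\frac{2}{u^2}\partial_r H_{\bar g}+\frac{2H_{\bar g}}{u^3}\partial_r u-\frac{|A_0|^2+H_{\bar g}^2}{u^2}-\frac{2}{u}\Delta_{\gamma_r}u,
\]
so that $R_{\tilde g}=0$ becomes
\[
H_{\bar g}\,\partial_r u=u^2\,\Delta_{\gamma_r}u+\Bigl(\partial_r H_{\bar g}+\tfrac12(|A_0|^2+H_{\bar g}^2)-\tfrac12 R_{\gamma_r}u^2\Bigr)u,
\]
which is genuinely forward-parabolic in the direction of increasing $r$, since the slices are expanding, $H_{\bar g}\approx(n-2)/r>0$ for $r_0$ large. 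So the collar lapse will be produced by solving this Cauchy problem.

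For the Cauchy datum I would take $u|_{r=r_0}=H_{\bar g}(r_0)/H_g(r_0)$, where $H_g(r_0)$ is the mean curvature of $\partial\Omega_{r_0}$ in $(M,g)$. This forces $H_{\tilde g}^-(r_0)=H_{\bar g}(r_0)/u(r_0)=H_g(r_0)=H_{\tilde g}^+(r_0)$, which is the first matching identity, and the ALF asymptotics give $u|_{r_0}=1+o(1)$; after a routine adjustment of the radial cross-terms of $\sigma$ near $\partial\Omega_{r_0}$ one also gets continuity of $\tilde g$ there (so it is genuinely Lipschitz across that face).

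The substantive step is to solve the Cauchy problem on $[r_0,r_0+\lambda]$ by comparison with the explicit model obtained by switching off $\sigma$. Then $\gamma_r$ is the Schwarzschild cross-section $(1+\tfrac{m}{2r^{n-3}})^{4/(n-3)}r^2\,\mathrm d\phi^2+\mathrm d\theta^2$ (the flat torus $\beta^2r^2\,\mathrm d\phi^2+\mathrm d\theta^2$ when $n=3$) and the exact solution is the Schwarzschild lapse $u^{\mathrm{mod}}=(1+\tfrac{m}{2r^{n-3}})^{2/(n-3)}$ (respectively $u^{\mathrm{mod}}\equiv1$), for which a direct computation of the junction at $r=r_0+\lambda$ with the flat end gives
\[
H_{\tilde g}^{+}(r_0+\lambda)-H_{\tilde g}^{-}(r_0+\lambda)=-\frac{m(n-2)}{r_0^{\,n-2}}\bigl(1+o(1)\bigr)>0\qquad(n\ge4),
\]
and equals $\tfrac{\beta-1}{\beta(r_0+1)}\bigl(1+o(1)\bigr)>0$ when $n=3$, using precisely $m<0$ (respectively $\beta>1$). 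For the true metric I would write $u=u^{\mathrm{mod}}+v$, derive the linear parabolic equation for $v$, and combine standard parabolic existence with maximum-principle a priori bounds to get a smooth positive solution on the whole collar with $v$ small in $C^0$; provided the forcing produced by $\sigma$ and by the cut-off $1-\tfrac1\lambda(r-r_0)$ is dominated by the model gap above, the sign of the true outer jump survives. Finally $R_{\tilde g}=R_g\ge0$ on $\Omega_{r_0}$, $R_{\tilde g}=0$ on the collar by construction, and $R_{\tilde g}=0$ on the flat end, which is the nonnegativity claim.

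I expect the genuine difficulty to be exactly this last estimate: keeping $u$ close enough to the model that the outer mean-curvature jump keeps its sign. Two errors compete with the model gap: the cut-off derivative contributes a term of size $\tfrac1\lambda|\sigma_r|\sim r_0^{\epsilon-\mu}$ in the mean curvature, forcing $\epsilon<\mu-(n-2)$, while the collar geometry contributes an error forcing $\epsilon>n-4$; the $\mu$-condition is precisely what makes the window $(n-4,\mu-(n-2))$ non-empty for $n\ge4$. For $n\ge4$ the smallness of $\lambda=r_0^{-\epsilon}$ makes this essentially perturbative, whereas for $n=3$ the collar has fixed length $\lambda=1$, so the parabolic estimates there are non-perturbative and one must exploit that the cross-section stays flat up to an $O(r_0^{-\mu})$ perturbation throughout the collar and that the leading reaction terms $\partial_r H_{\bar g}$ and $\tfrac12(|A_0|^2+H_{\bar g}^2)$ nearly cancel (indeed, for the flat model $u\equiv1$ solves the equation exactly); here $\mu>1$ already suffices because the model gap is of the much larger order $r_0^{-1}$.
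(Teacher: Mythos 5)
Your proposal follows the same overall strategy as the paper: realize the collar as a quasi-spherical metric $\tilde g = u^2\,\mathrm dr^2 + \gamma_r$, set the Cauchy datum $u(r_0,\cdot)=H_{\bar g}(r_0)/H_g(r_0)$ so that the inner mean curvatures match, propagate $u$ on $[r_0,r_0+\lambda]$, and then compare mean curvatures at the outer face. Your asymptotics for the outer jump (of order $\sim -m(n-2)/r_0^{n-2}$ for $n\ge4$ and $\sim(\beta-1)/(\beta r_0)$ for $n=3$) match the paper's Equations (3.17)--(3.19) and (3.15)--(3.16), and your explanation of the window $n-4<\epsilon<\mu-(n-2)$ captures exactly the two competing error sources the paper is balancing (the cut-off derivative $\lambda^{-1}\sigma_r$ versus the collar-length error in propagating $u$).

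The one genuine divergence is how $u$ is controlled on the collar. You write $u=u^{\mathrm{mod}}+v$ where $u^{\mathrm{mod}}=(1+\tfrac{m}{2r^{n-3}})^{2/(n-3)}$ (resp. $u^{\mathrm{mod}}\equiv1$ for $n=3$) is the exact Schwarzschild/flat lapse and then attack $v$ by linearized parabolic estimates. The paper instead avoids the model entirely: it sets $M=\max_r H_{\bar g}^{-1}\bigl(|R_{\gamma_r}|+|R_{\bar g}|\bigr)$, shows $M\le Cr_0^{-1}$ from the decay of $\sigma$, and then invokes the maximum-principle a priori bound (Claim 2.2 in \cite{shi2020total}), which compares $u$ with the spatially constant ODE solution and gives directly
$u(r_0+\lambda,\cdot)\le u(r_0)(1+C\lambda/r_0)$. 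Both methods close the argument, but the paper's is leaner --- existence and the closeness-to-initial-value bound come in one step, no explicit model lapse, no linearization. Your route is more in the spirit of the original Shi--Tam comparison with the Schwarzschild model and makes the error book-keeping more transparent, at the cost of having to handle a quasilinear (not genuinely linear, as you wrote) equation for $v$. Two minor remarks: (i) the continuity of $\tilde g$ across $\partial\Omega_{r_0}$ is not needed in full --- Miao's gluing only requires agreement of the induced metrics and the correct mean-curvature jump, so the "routine adjustment of radial cross-terms" you mention is unnecessary; (ii) for $n=3$ the paper's a priori bound gives $u(r_0+\lambda,\cdot)\le 1+O(r_0^{1-\mu})$, which after division still leaves an $O(r_0^{-\mu})$ error against a gap of order $r_0^{-1}$, so $\mu>1$ is indeed all that is needed, exactly as you say.
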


\begin{proof}
By the proof of Lemma 2.1 in \cite{shi2020total}, the  requirement that $R_{\tilde{g}} =0 $ on $\Omega_{r_{0}+\lambda}\setminus\Omega_{r_{0}}$, and $H_{\tilde{g}}^{+}(r_{0})=H_{\tilde{g}}^{-}(r_{0})$ is equivalent to the following initial value problem.

\begin{equation}\label{PSCExtensionQSeq}
\left\{
\begin{aligned}
H_{\bar{g}}(r)\frac{\partial u}{\partial r}&=u^2\Delta_{\gamma_r}u-\frac{1}{2}R_{\gamma_r}u^3+\frac{1}{2}\left(R_{\gamma_r}-R_{\bar g}\right)u, r_{0}\leqslant r\leqslant r_{0}+\lambda\\
\quad u(r_0,\cdot)&=\frac{H_{\bar{g}}(r_{0})
}{H_{g}(r_{0})}
 \end{aligned}
\right.
\end{equation}

By a direct computation, we see that for $r_{0}\leqslant r\leqslant r_{0}+\lambda $ ,

\begin{equation}\label{meancurvature1}
\begin{split}
H_{g}(r)&=\frac{1}{r}+O(r^{-(\mu+1})	, n=3\\
H_{g}(r)&=\frac{n-2}{r}-\frac{(n-2)^{2}m}{(n-3)}\frac{1}{r^{n-2}}+O(r^{-(n-1)})+ O(r^{-(\mu+1)}), n\geqslant 4
\end{split}
\end{equation}

From the hypothesis of order $\mu$, the term $O(\frac{1}{r^{\mu+1}})$ thrown .
In $n\geqslant 4$, $\mu> 2n-6$, we always assume $\mu \leqslant 2n-5$, if not, just let $\mu = 2n-5$, same, in $n=3$, we assume $1<\mu\leqslant2$. Then we calculate $H_{\bar{g}}(r)$, note that the term $[1-\frac{1}{\lambda}(r-r_{0})]\sigma_{r} = \eta_{r}$, with $\eta_{r}=O(r^{-\mu})$, $\partial \eta_{r}=\frac{1}{\lambda}O(r^{-\mu})$, $\partial^{2}\eta_{r}=\frac{1}{\lambda}O(r^{-\mu-1})$ on $\Omega_{r_{0}+\lambda}\setminus\Omega_{r_{0}}$.

For $n=3 $, take $\lambda =1$, then
\begin{equation}\label{bargn=3}
H_{\bar{g}}(r)=\frac{1}{r}+O(r^{-\mu})>0
\end{equation}

For $n\geqslant 4$, let $\lambda =r_{0}^{-\epsilon}$, $n-4<\epsilon<\mu-(n-2)$, then
\begin{equation}\label{bargn=4}
H_{\bar{g}}(r)=\frac{n-2}{r}-\frac{(n-2)m}{r^{n-2}}+O(r^{-(\mu-\epsilon)})
\end{equation}

Note that
$$u(r_0,\cdot)=\frac{H_{\bar{g}}^{-}(r_{0})}{H_{\tilde{g}}^{+}(r_{0})}$$

so
\begin{equation}\label{ur0}
\begin{aligned}
u(r_0,\cdot)&=1 +O(r^{1-\mu}_0),n=3\\
u(r_0,\cdot)&=1+\frac{1}{(n-3)}\frac{m}{r_{0}^{n-3}}+O(r_{0}^{-(\mu-\epsilon-1)}), n\geqslant 4
\end{aligned}
\end{equation}

here we have use the relationship $n-4<\epsilon<\mu-(n-2)$. We want to show the existence of $u$ in \eqref{PSCExtensionQSeq} and control $u(r_{0}+\lambda,\cdot)$. As in the proof of Lemma 2.1 in \cite{shi2020total},  we set

\begin{equation}\label{M}
M=\max\limits_{\Omega_{r_{0}+\lambda}\setminus\Omega_{r_{0}}} H_{\bar{g}}^{-1}(r)\left(|R_{\gamma_r}|+|R_{\bar g}|\right)
\end{equation}

Here $R_{\gamma_r}$ means the scalar curvature of $\partial\Omega_{r}$ with metric $\gamma_{r}$, $R_{\bar g}$ means the scalar curvature of $\bar{g}$ on $\Omega_{r_{0}+\lambda}\setminus\Omega_{r_{0}}$. Then by a direct computation, 
$R_{\gamma_r}=O(r^{-(\mu+2)})$, $R_{\bar g}=O(r^{-(\mu+1)})$, in $n=3$; $R_{\gamma_r}=\frac{1}{r^{2}}R_{\mathbb{S}^{n-2}}+O(r^{-(\mu+2)})$, in $n\geqslant 4$, here $R_{\mathbb{S}^{n-2}}$ means the scalar curvature of the standard unit sphere with dimension $n-2$,  and $R_{\bar g}=O(r^{-(n-1)})$. See that

\begin{equation}\label{Mestimate}
M\leqslant C r_{0}^{-1}
\end{equation}

here $C>0$ is a positive constant depends only on the metric $g$ and $n$, and may change from line to line.
Then by the proof of CLAIM 2.2 in \cite{shi2020total}, we know that \eqref{PSCExtensionQSeq} has a positive solution on  $\Omega_{r_{0}+\lambda}\setminus\Omega_{r_{0}}$, and

\begin{equation}\label{u(r0+lambda)}
u(r_{0}+\lambda,\cdot)\leqslant \frac{u(r_{0})}{\sqrt{(1+u(r_{0})^{2})e^{-M\lambda}-u(r_{0})^{2}}}\leqslant u(r_{0})(1+\frac{C\lambda}{r_{0}})
\end{equation}

For $n=3$, $\lambda =1$, so

$$
u( r_0+\lambda,\cdot)\leqslant 1+O(r_{0}^{1-\mu})
$$

For $n\geqslant 4$, $\lambda =r_{0}^{-\epsilon}$, so

$$
u( r_0+\lambda,\cdot)\leqslant 1+\frac{1}{(n-3)}\frac{m}{r_{0}^{n-3}}+o(r_{0}^{-(n-3)})
$$

For $n=3$, the mean curvature of $\tilde{g}$

\begin{equation}\label{meancurvature2}
 H_{\tilde{g}}^{+}(r_{0}+\lambda)=\frac{H_{\bar{g}}(r_{0}+\lambda)}{u(r_{0}+\lambda,\cdot)}\geqslant (r_0+\lambda)^{-1}+O((r_0+\lambda)^{-\mu}),
\end{equation}

meanwhile, we have

\begin{equation}\label{meancurvature3}
H_{\tilde{g}}^{-}(r_0+\lambda)=\beta^{-1} (r_0+\lambda)^{-1},
\end{equation}

hence
$$
H_{\tilde{g}}^{+}(r_{0}+\lambda)> H_{\tilde{g}}^{-}(r_0+\lambda)
$$

provided by $\beta >1$ and $r_0$ large is enough.

For $n\geqslant 4$

\begin{equation}\label{meancurvature4}
 H_{\tilde{g}}^{+}(r_{0}+\lambda)=\frac{H_{\bar{g}}(r_{0}+\lambda)}{u(r_{0}+\lambda,\cdot)}\geqslant
\frac{n-1}{r_{0}+\lambda}-\frac{(n-2)^{2}}{n-3}\frac{m}{(r_{0}+\lambda)^{n-2}}+o((r_{0}+\lambda)^{-(n-2)})
\end{equation}

and

\begin{equation}\label{meancurvature5}
H_{\tilde{g}}^{-}(r_0+\lambda)=\frac{n-2}{r_{0}+\lambda}-\frac{n-2}{n-3}\frac{m}{(r_{0}+\lambda)^{n-2}}+o((r_{0}+\lambda)^{-(n-2)})
\end{equation}

hence
$$
H_{\tilde{g}}^{+}(r_{0}+\lambda)> H_{\tilde{g}}^{-}(r_0+\lambda)
$$

provided by $m<0$ and $r_0$ is enough large.Thus we finish the proof of Proposition \ref{compactfy1}.
\end{proof}
\begin{proof}[Proof of the inequality part of Theorem \ref{ALFPMT}]:

 Suppose Theorem \ref{ALFPMT}  fails, $\beta>1$ or $m<0$. Due to Proposition \ref{compactfy1}, we can construct a $(M^{n},\tilde{g})$ defined by \eqref{PSCExtensionmetric1} and \eqref{PSCExtensionmetric2}, satisfied $R_{\tilde{g}}$	is  nonnegative on $M\setminus (\partial\Omega_{r_{0}} \cup \mathbb \partial\Omega_{r_{0}+\lambda})$, and $H_{\tilde{g}}^{+}(r_{0})=H_{\tilde{g}}^{-}(r_{0})$, $H_{\tilde{g}}^{+}(r_{0}+\lambda)>H_{\tilde{g}}^{-}(r_{0}+\lambda)$. Observe that g is flat outside $\Omega_{r_{0}+\lambda}$, then we glue opposite faces of $M\setminus \Box_{r}^{c}\times \mathbb{S}^{1}$, $r>r_{0}+\lambda$, $\Box_{r}^{c}=\mathbb{R}^{n-1}\setminus \{x=\{x^{i}\},|x^{i}|\leqslant r ,\for i =1,\cdots,n-1 \}$, then get a compact manifold $M_{0}^{n}$ with $\tilde{g}$ restricted on it.

Then after carrying Miao's mollifying procedure (\cite{Miao2002}) for corners, we see that there is a smooth metric on $M_{0}^{n}$ with positive scalar curvature which contradicted to Theorem \ref{Thm: homotopy nontrivial no PSC}, hence we finish the proof.

\end{proof}

\subsection{Proof of rigidity part}

Let $(M^{n},g)$ be an ALF manifold with scalar curvature $R_g\geqslant 0$ and $\mu$-condition, and $\beta=1$ when $n=3$, $m=0$ when $n\geqslant4$. First we want to show $R_{g}\equiv 0$, we argue it by contradiction. We assume
$R_g\geqslant 0$ and $R_g(p)>0$ for some $p\in M$. Note that if $g_{\rho}= \rho^{\frac{4}{n-2}}g$ for some  positive smooth function $\rho$ on $M$,  then
$$
R_{ g_{\rho}}= \rho^{-\frac{n+2}{n-2}}(R_g \rho-\frac{4(n-1)}{n-2}\Delta_g \rho).
$$

Assume $R_g>0$ on $B_\varepsilon(p)$ for some small geodesic ball with radius $\varepsilon$, let $f$ be a nonnegative smooth function on $M$ with $f(p)=\frac{R_g(p)}{2}$ and supports in $B_\varepsilon(p)$. We consider the following the boundary value problem for each large $r$.

\begin{equation}\label{u_r}
\left\{
\begin{aligned}
&\Delta_g u_r-\frac{n-2}{4(n-1)}f u_r=0, \quad \text{in $\Omega_r$}	\\
 &u_r|_{\partial\Omega_{r}}=1
 \end{aligned}
\right.
\end{equation}

Note that $f(p)>0$, $w\equiv 0$ is the only solution to the following homogenous equation

\begin{equation}\label{Ur}
\left\{
\begin{aligned}
&\Delta_g w-\frac{n-2}{4(n-1)}f w=0, \quad \text{in $\Omega_r$}	\\
&w|_{\partial\Omega_{r}}=0
 \end{aligned}
\right.
\end{equation}

Thus we see that \eqref{u_r} has a unique positive solution for any large $r$, $\sup u_{r}=1$ and $u_{r}$ is harmonic outside $B_\varepsilon(p)$. To explore the behavior of the solution of \eqref{u_r}, we need the following lemma:

\begin{lemma}\label{estimateBVP1}
Let  $u_r$ be the positive solution of \eqref{u_r}	, then there exists a constant $ \Lambda_{r_{0}} < 1 $ depends on $r_{0}$ and $g$, that
$$
\sup_{r>2r_{0},x\in\partial\Omega_{r_{0}}}|u_r(x)|\leqslant\Lambda_{r_{0}}
$$
\end{lemma}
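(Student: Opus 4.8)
The plan is to recast the claim in terms of the complementary function $v_r:=1-u_r$, and then produce a uniform positive lower bound for $v_r$ on $\partial\Omega_{r_0}$ by a monotonicity (comparison) argument in $r$; this avoids having to analyze the limit $r\to\infty$ at all, and explains why the statement is phrased with the "buffer'' $r>2r_0$.

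First I would record the elementary a priori bounds. Put $\kappa:=\tfrac{n-2}{4(n-1)}>0$ and $L:=\Delta_g-\kappa f$. Since $f\geq0$, the zeroth order coefficient of $L$ is $\leq0$, so $L$ obeys the weak comparison principle and the strong maximum principle with no further hypothesis. From $Lu_r=0$ in $\Omega_r$ and $u_r|_{\partial\Omega_r}=1$, comparison with the constants $0$ and $1$ gives $0\leq u_r\leq 1$ in $\Omega_r$ (and $u_r>0$ in the interior by the strong maximum principle, as stated when \eqref{u_r} is solved). Hence $v_r\geq 0$ and
\[
\Delta_g v_r-\kappa f v_r=-\kappa f \quad\text{in }\Omega_r,\qquad v_r|_{\partial\Omega_r}=0 .
\]
Because the right-hand side $-\kappa f$ is $\leq0$ and $\not\equiv0$ (we have $f(p)>0$ and $p\in\Omega_r$ for $r$ large), the strong maximum principle forces $v_r>0$ in the interior of $\Omega_r$. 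On $\partial\Omega_{r_0}$ we therefore have $|u_r|=u_r=1-v_r$, so the lemma is equivalent to producing $\delta_{r_0}>0$ with $v_r\geq\delta_{r_0}$ on $\partial\Omega_{r_0}$ for all $r>2r_0$.

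Next I would prove the monotonicity $v_{r_1}\leq v_{r_2}$ on $\Omega_{r_1}$ whenever $r_0\leq r_1<r_2$, with $r_0$ taken large enough that the $\Omega_r$ are nested domains with smooth boundary. On $\Omega_{r_1}$ both $v_{r_1}$ and $v_{r_2}$ solve $Lw=-\kappa f$, so $w:=v_{r_2}-v_{r_1}$ satisfies $Lw=0$ in $\Omega_{r_1}$ and $w=v_{r_2}\geq0$ on $\partial\Omega_{r_1}$; since the zeroth order coefficient of $L$ is $\leq0$, the comparison principle gives $w\geq0$, i.e. $v_{r_2}\geq v_{r_1}$ in $\Omega_{r_1}$. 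Thus $r\mapsto v_r(x)$ is nondecreasing on $(r_0,\infty)$ for each fixed $x$. Now choose $r_0$ also large enough that $\overline{\Omega_{r_0}}\subset\Omega_{2r_0}$; then $v_{2r_0}$ is continuous and, by the strict positivity above, $>0$ on the compact set $\partial\Omega_{r_0}$, so $\delta_{r_0}:=\min_{\partial\Omega_{r_0}}v_{2r_0}>0$. For any $r>2r_0$ and $x\in\partial\Omega_{r_0}$, monotonicity yields $v_r(x)\geq v_{2r_0}(x)\geq\delta_{r_0}$, hence $u_r(x)\leq 1-\delta_{r_0}=:\Lambda_{r_0}<1$, a bound depending only on $r_0$ and $g$ (through the fixed cut-off $f$).

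I expect the only load-bearing point to be the sign condition $f\geq0$: it is precisely what makes $L=\Delta_g-\kappa f$ obey the comparison and strong maximum principles without any eigenvalue or smallness assumption, and hence what makes both the monotonicity step and the strict positivity of $v_{2r_0}$ work. The more naive approach — extracting a locally $C^{2,\alpha}$-convergent subsequence of the $v_r$, passing to a limit $v_\infty$ solving the equation on all of $M$, and arguing $v_\infty>0$ — would instead require decay and convergence estimates near the ALF/ALG end, and it is exactly this that the monotonicity argument sidesteps.
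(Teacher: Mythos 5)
Your proof is correct, and it takes a genuinely different route from the paper's. The paper argues by contradiction: it assumes $\sup_{\partial\Omega_{r_0}}u_{r_i}\to 1$ along a sequence $r_i$, rescales to $v_i=u_{r_i}/d_i$, and uses compactness (elliptic estimates plus diagonal extraction, which are implicit in the write-up) to pass to a limit $v$ solving $\Delta_g v-\kappa f v=0$ with $v\le1$ and $v=1$ somewhere on the interior set $\partial\Omega_{r_0}$; the strong maximum principle then forces $v\equiv 1$, contradicting $f(p)>0$. Your argument replaces all of that with two applications of the weak comparison principle for $L=\Delta_g-\kappa f$ (valid because the zeroth-order coefficient $-\kappa f$ is $\le 0$): first to show $v_r:=1-u_r\ge 0$ with $Lv_r=-\kappa f$, then to show $r\mapsto v_r$ is monotone on fixed compact sets; combined with the strong maximum principle applied once to $v_{2r_0}$, this yields the uniform lower bound $\delta_{r_0}>0$ directly. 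What your approach buys is a cleaner argument that avoids any compactness, a priori estimates, or passing to a limit along a subsequence; it also makes transparent why the ``buffer'' $r>2r_0$ is natural (one needs $\partial\Omega_{r_0}$ to sit in the interior of a \emph{fixed} domain $\Omega_{2r_0}$ on which $v_{2r_0}$ is strictly positive). The only mild cost is that the monotonicity step requires the nested-domain structure $\Omega_{r_1}\subset\Omega_{r_2}$, which is available here; the paper's compactness argument is in that narrow sense more robust, but here your version is both shorter and more elementary.
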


\begin{proof}

If not, then exists $u_{r_{i}}$, $r_{i}\rightarrow \infty$, or $r_{i}\rightarrow r_{a}\geqslant 2r_{0}$, and
$$\sup_{\partial\Omega_{r_{0}}}|u_{r_{i}}|=d_{i}\rightarrow 1$$

Let $v_{i}=\frac{u_{r_{i}}}{d_{i}}$, then
$$\sup_{\partial\Omega_{r_{0}}}v_{i}=1 ,|v_{i}| \leqslant \frac{1}{d_{i}}$$

If $r_{i}\rightarrow \infty$, then exists a function $v$, $v_{i}\rightarrow v$ uniformly on compact subsets of $M^{n}$, and
\begin{equation}\label{laplacev}
\Delta v-\frac{n-2}{4(n-1)}f v =0,v\leqslant 1
\end{equation}

See that $\sup\limits_{\partial\Omega_{r_{0}}}v=1$, attains its maximum point in interior, then $v\equiv 1$, contradicted to $f(p)\neq 0$. If $r_{i}\rightarrow r_{a}\geqslant 2r_{0}$, by the same argument, we get a $v$ defined in $\Omega_{2r_{0}-\varepsilon}$,  for all $\varepsilon>0$, $v$ satiefied \eqref{laplacev}, and attains its maximum point in interior, contradicted to $f(p)\neq 0$.

\end{proof}
\begin{lemma}\label{estimateBVP2}
Suppose $n=3$, $g$ is ALF with $\beta =1$, $u_{r_{1}}$ defined by $\eqref{u_r}$. $r_{1}$ is sufficient large, then there exists a constant $\delta>0$ depends only on $g$,  $u_{r_{1}}$ satisfied

\begin {equation}\label{11}
\frac{\partial u_{r_{1}}}{\partial r}|_{r=r_{1}}\geqslant \frac{\delta}{r_{1}\log{r_{1}}}
\end{equation}

If $n\geqslant 4$, $g$ is ALF with $m=0$, $u_{r_{1}}$ defined by $\eqref{u_r}$. $r_{1}$ is sufficient large, $\tau >0$, then there exists a constant $\delta>0$ depends only on $g$ and $\tau$, $u_{r_{1}}$ satisfied

\begin {equation}\label{22}
\frac{\partial u_{r_{1}}}{\partial r}|_{r=r_{1}}\geqslant \frac{\delta}{r_{1}^{n-2+\tau}}
\end{equation}
\end{lemma}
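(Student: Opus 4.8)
The plan is to prove the lower bound by a barrier comparison on the exterior annulus $A_{r_1}:=\Omega_{r_1}\setminus\Omega_{r_0}$, where $r_0=r_0(g)$ is a large radius fixed once and for all so that $B_\varepsilon(p)\subset\Omega_{r_0}$ and the ALF asymptotics hold on $\{r\geq r_0\}$. On $A_{r_1}$ the solution $u_{r_1}$ is $g$-harmonic, $0<u_{r_1}\leq 1$, and $u_{r_1}\equiv 1$ on $\partial\Omega_{r_1}$; the key input is Lemma \ref{estimateBVP1}, which gives $u_{r_1}\leq \Lambda_{r_0}<1$ on $\partial\Omega_{r_0}$ for $r_1>2r_0$. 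I would construct a radial function $\Phi=\Phi(r)$ on $A_{r_1}$ that is $g$-superharmonic, satisfies $\Phi\geq u_{r_1}$ on both boundary components, and equals $1$ on $\partial\Omega_{r_1}$. The minimum principle for $g$-superharmonic functions then yields $\Phi\geq u_{r_1}$ on all of $A_{r_1}$; since $\Phi-u_{r_1}\geq 0$ on $A_{r_1}$ and vanishes on $\partial\Omega_{r_1}$, its outward radial derivative there is $\leq 0$, i.e. $\partial_r u_{r_1}|_{r=r_1}\geq\partial_r\Phi|_{r=r_1}$. The whole argument thus reduces to producing $\Phi$ with $\partial_r\Phi|_{r=r_1}\gtrsim_g (r_1\log r_1)^{-1}$ for $n=3$ and $\gtrsim_g r_1^{-(n-2)}$ for $n\geq 4$.

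For the construction I would perturb the radial harmonic interpolant of the flat model $\mathbb R^{n-1}\times\mathbb S^1$, which — because $\beta=1$ (resp. $m=0$) — is exactly the geometry at infinity. Writing $G(r)=\log r$ when $n=3$ and $G(r)=r^{3-n}$ when $n\geq 4$, put
$$
\Phi_0(r)=1-(1-\lambda_*)\,\frac{G(r)-G(r_1)}{G(r_0)-G(r_1)},
$$
with $\lambda_*\in(\Lambda_{r_0},1)$ chosen just above $\Lambda_{r_0}$. Then $\Delta_{\mathrm{flat}}\Phi_0=0$, $\Phi_0$ is increasing, $\Phi_0(r_0)=\lambda_*$, $\Phi_0(r_1)=1$, and $\partial_r\Phi_0|_{r=r_1}$ equals $(1-\lambda_*)\big(r_1(\log r_1-\log r_0)\big)^{-1}$ for $n=3$ and $(1-\lambda_*)(n-3)r_1^{2-n}\big(r_0^{3-n}-r_1^{3-n}\big)^{-1}$ for $n\geq 4$, hence is of the size $\delta/(r_1\log r_1)$, resp. $\delta\,r_1^{-(n-2)}$, for $r_1$ large. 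The ALF decay $|\partial^k\sigma|=O(r^{-\mu-k})$ gives $|(\Delta_g-\Delta_{\mathrm{flat}})f|\lesssim r^{-\mu}|\partial^2f|+r^{-\mu-1}|\partial f|$ in the asymptotic region, so $|\Delta_g\Phi_0|\leq C_1r^{-\mu-(n-1)}$ with $C_1$ independent of $r_1$. To upgrade $\Phi_0$ to a genuine $g$-superharmonic barrier I subtract $K(r^{-\alpha}-r_1^{-\alpha})$ with $\alpha\in(n-3,\mu+n-3]$ (respectively $\alpha\in(0,\mu]$ for $n=3$), which is possible since $\mu>0$; because $\Delta_{\mathrm{flat}}(r^{-\alpha})=\alpha(\alpha-(n-3))r^{-\alpha-2}$ has a positive sign of the correct order, for $K=K(g)$ large the function $\Phi:=\Phi_0-K(r^{-\alpha}-r_1^{-\alpha})$ satisfies $\Delta_g\Phi\leq 0$ on $A_{r_1}$. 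Moreover $\partial_r\Phi|_{r=r_1}=\partial_r\Phi_0|_{r=r_1}+K\alpha r_1^{-\alpha-1}\geq\partial_r\Phi_0|_{r=r_1}$, so the lower bounds persist; $\Phi(r_1)=1$; and taking $\lambda_*=\Lambda_{r_0}+Kr_0^{-\alpha}$ makes $\Phi(r_0)=\Lambda_{r_0}+Kr_1^{-\alpha}>\Lambda_{r_0}\geq u_{r_1}|_{\partial\Omega_{r_0}}$, so $\Phi\geq u_{r_1}$ on $\partial A_{r_1}$ as needed.

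The only place the size of $\mu$ enters is the admissibility of $\lambda_*\in(\Lambda_{r_0},1)$, i.e. the inequality $Kr_0^{-\alpha}<1-\Lambda_{r_0}$. Taking $K$ at the superharmonicity threshold one gets $Kr_0^{-\alpha}\lesssim_g r_0^{-\mu}$, whereas a comparison of $1-u_{r_1}$ with the model harmonic function on $\mathbb R^{n-1}\times\mathbb S^1$ shows that $1-\Lambda_{r_0}$ does not decay faster than $r_0^{-(n-3)}$ ($n\geq 4$) or $(\log r_0)^{-1}$ ($n=3$); since $\mu>n-3$ — which is implied by the $\mu$-condition — the inequality holds once $r_0$ is large, and then $\partial_r u_{r_1}|_{r=r_1}\geq\partial_r\Phi|_{r=r_1}\geq\delta/(r_1\log r_1)$ for $n=3$, while for $n\geq 4$ it is $\geq\delta\,r_1^{-(n-2)}\geq\delta\,r_1^{-(n-2+\tau)}$, with $\delta>0$ depending only on $g$ (and, trivially, on $\tau$). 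The main obstacle I expect is exactly this interlocking of $r_0$, $\alpha$ and $K$, together with the need to quantify the rate at which $\Lambda_{r_0}\to 1$; the superharmonicity estimate and the maximum-principle comparison itself are routine.

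An alternative, possibly lighter route worth recording: integrating the equation gives $\int_{\partial\Omega_{r_1}}\partial_r u_{r_1}\,\mathrm d\sigma=\tfrac{n-2}{4(n-1)}\int_{\Omega_{r_1}}f\,u_{r_1}\,\mathrm dV$, whose right side stays bounded away from $0$ as $r_1\to\infty$ since $u_{r_1}\to u_\infty>0$, so the average of $\partial_r u_{r_1}$ on $\partial\Omega_{r_1}$ is already of order $r_1^{-(n-2)}$ (resp. $(r_1\log r_1)^{-1}$). A boundary gradient estimate, using that $u_{r_1}$ is a controlled harmonic function on the shell $\{r_1/2\leq r\leq r_1\}$, would then upgrade this average bound to the pointwise statement.
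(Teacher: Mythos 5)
Your proposal is correct and is essentially the paper's proof: both arguments build a radial, $g$-superharmonic comparison function on the annulus $\Omega_{r_1}\setminus\Omega_{r_0}$ that majorizes $u_{r_1}$ on both boundary spheres with equality at $r=r_1$, apply the maximum principle, and read off the normal derivative of the barrier at $r_1$. For $n=3$ the barrier is structurally identical to the paper's $v=\frac{-1}{(\alpha-2)^2}r^{2-\alpha}+C_1\log r+C_2$ (a flat-harmonic interpolant plus a strictly superharmonic corrector of lower order). For $n\geq 4$ there is a small but genuine variation: the paper takes the one-parameter family $v=C_1-C_2\,r^{-(n-3+\tau)}$, which is automatically flat-superharmonic with a margin of order $r^{-(n-1+\tau)}$ but yields only $\partial_r u_{r_1}\gtrsim r_1^{-(n-2+\tau)}$; you keep the exact flat Green's profile $r^{3-n}$ as interpolant and restore superharmonicity by a separate perturbation $-K(r^{-\alpha}-r_1^{-\alpha})$ with $\alpha\in(n-3,\,n-3+\mu]$, which is slightly more bookkeeping but produces the stronger derivative bound $\gtrsim r_1^{-(n-2)}$ (which you then weaken to the stated $r_1^{-(n-2+\tau)}$). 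The argument you invoke about how slowly $1-\Lambda_{r_0}$ can decay is unnecessary: $r_0$ is fixed once by Lemma~\ref{estimateBVP1}, so $1-\Lambda_{r_0}$ is a fixed positive constant and the inequality $Kr_0^{-\alpha}<1-\Lambda_{r_0}$ simply forces $r_0$ to be taken large before $K$ is set (with $1-\lambda_*\le 1$ in the superharmonicity estimate to avoid the apparent circularity between $K$ and $\lambda_*$). With that small streamlining the proof goes through.
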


\begin{proof}Consider $2 r_{0}\leqslant r_{1}$, $r_{0}$ is large enough and will be fixed later.

For $n=3$, let
$$v=\frac{-1}{(\alpha-2)^{2}}r^{2-\alpha}+C_{1}\log r +C_{2},r=|x|$$

Defined on $\Omega_{r_{1}}\setminus \Omega_{r_{0}}$ and $\alpha>2$. $C_{1},C_{2}$ to be chosen so that
$$v|_{r=r_{0}}=\Lambda_{r_{0}},v|_{r=r_{1}}=1$$

So $(v-u_{r_{1}})|_{\partial (\Omega_{r_{1}}\setminus \Omega_{r_{0}}) }\geqslant 0$. Now $\beta=1$, $g=dx^{2}+d\theta^{2}+\sigma$, so for a $h(r)$

$$\triangle h(r)=h''(r)+\frac{1}{r}h'(r)+h''(r)O(r^{-\mu})+h'(r)O(r^{-1-\mu})$$

Calculation show
\begin{equation}\label{estimateBVP3}
\triangle v=-r^{-\alpha}+C_{1}O(r^{-2-\mu})+O(r^{-\alpha-\mu})
\end{equation}

Note that,
\begin{equation}
C_{1}=\frac{1-\Lambda_{r_{0}}}{\log r_{1}-\log r_{0}}+\frac{1}{(\alpha-2)^{2}}\frac{r_{1}^{2-\alpha}-r_{0}^{2-\alpha}}{\log r_{1}-\log r_{0}}
\end{equation}

Fix $2<\alpha<2+\mu $, and $r_{0}$ large, so that $r_{0}^{2-\alpha}\ll 1$. Then let $r_{1}$ large enough, there exists a $\delta$,

\begin{center}
$C_{1} \geqslant \frac{\delta}{\log{r_{1}}}$
, and $\triangle v <0$ on $\Omega_{r_{1}}\setminus \Omega_{r_{0}}$
\end{center}

By the maximum principle, we have

\begin{center}
$v-u_{r_{1}}\geqslant 0$ on $\Omega_{r_{1}}\setminus \Omega_{r_{0}}$
\end{center}

Due to $v-u_{r_{1}}|_{\partial\Omega_{r_{1}}}=0$, so 
$$\frac{\partial v}{\partial r}|_{r=r_{1}}\leqslant \frac{\partial u_{r_{1}}}{\partial r}|_{r=r_{1}}$$

which implies \eqref{11}.

For $n\geqslant 4$, consider
$$v=C_{1}-C_{2}\frac{1}{r^{n-3+\tau}}$$

Defined on $\Omega_{r_{1}}\setminus \Omega_{r_{0}}$. $C_{1},C_{2}$ to be chosen so that

\begin{center}
$v|_{r=r_{0}}=\Lambda_{r_{0}},v|_{r=r_{1}}=1$
\end{center}

So $(v-u_{r_{1}})|_{\partial (\Omega_{r_{1}}\setminus \Omega_{r_{0}}) }\geqslant 0$.
Note that $m=0$, so for a $h(r)$
$$\triangle h(r)=h''(r)+\frac{n-2}{r}h'(r)+h''(r)O(r^{-\mu})+h'(r)O(r^{-1-\mu})$$

Calculation show
\begin{equation}
\triangle v=-C_{2}\tau(n-3+\tau)\frac{1}{r^{n-1+\tau}}+O(\frac{1}{r^{n-2+\tau+\mu}})
\end{equation}

And
$$
C_{2}=\frac{1-\Lambda_{r_{0}}}{\frac{-1}{r_{1}^{n-3+\tau}}+\frac{1}{r_{0}^{n-3+\tau}}}$$

Fix $r_{0}$ large, and let $r_{1}$ large, then exists $\delta>0$,

\begin{center}
$C_{2} \geqslant \delta$
, and $\triangle v <0$ on $\Omega_{r_{1}}\setminus \Omega_{r_{0}}$
\end{center}

Then use maximum principle, and same argument show \eqref{22}.
\end{proof}

\begin{proposition}\label{ALFRigidity1}

Let $(M^n,g)$ be an ALF manifold with scalar curvature $R_g\geqslant 0$ and $\mu$-condition, $n\leqslant 7 $ and its $\mathbb S^1$-factor at the infinity represents a non-trivial element in $\pi_1(M)$, if $\beta =1$, in $n=3$, then $R_{g}\equiv 0$ ; if $m=0$, in $n\geqslant 4$, then $R_{g}\equiv 0$.

\end{proposition}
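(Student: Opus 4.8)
The plan is to argue by contradiction and to convert pointwise positivity of $R_g$ into a mean curvature surplus at a large sphere, then to rerun the compactification from the inequality part so as to contradict the rigidity clause of Theorem \ref{Thm: homotopy nontrivial no PSC}. So suppose $R_g(p)>0$ for some $p\in M$, fix a nonnegative bump function $f$ with $0\le f\le R_g$, $f(p)=R_g(p)/2$, supported in $B_\varepsilon(p)$, and for large $r_1$ let $u_{r_1}$ be the unique positive solution of \eqref{u_r}, so that $u_{r_1}\le 1$, $u_{r_1}|_{\partial\Omega_{r_1}}=1$, and $u_{r_1}$ is harmonic outside $B_\varepsilon(p)$. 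Using $\Delta_g u_{r_1}=\tfrac{n-2}{4(n-1)}f\,u_{r_1}$ together with the conformal transformation law for scalar curvature quoted above, the metric $\hat g=u_{r_1}^{4/(n-2)}g$ on $\Omega_{r_1}$ satisfies $R_{\hat g}=u_{r_1}^{-4/(n-2)}(R_g-f)\ge 0$ with $R_{\hat g}(p)>0$; moreover, since $u_{r_1}\equiv 1$ on $\partial\Omega_{r_1}$, the induced boundary metric there is unchanged and
\begin{equation*}
H_{\hat g}(r_1)=H_g(r_1)+\frac{2(n-1)}{n-2}\left.\frac{\partial u_{r_1}}{\partial r}\right|_{r=r_1}>H_g(r_1).
\end{equation*}

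Next I would run the collar construction of Proposition \ref{compactfy1} essentially verbatim, only with $\hat g$ in place of $g$ on $[r_1,r_1+\lambda]$: since $\hat g$ and $g$ induce the same metric on $\partial\Omega_{r_1}$, the data $\gamma_r$ are unchanged and one solves the quasi-spherical equation \eqref{PSCExtensionQSeq} of \cite{shi2020total} with the only modified ingredient being the initial value $u(r_1)=H_{\bar g}(r_1)/H_{\hat g}(r_1)$, obtaining a scalar-flat interpolating metric that matches $\hat g$ with matched mean curvatures at $r_1$ and agrees with a flat product for $r\ge r_1+\lambda$. Because $H_{\hat g}(r_1)>H_g(r_1)$, this initial value is strictly smaller than the one used in Proposition \ref{compactfy1}, hence by the monotonicity in \eqref{u(r0+lambda)} so is $u(r_1+\lambda)$, and therefore the inner mean curvature $H^+_{\tilde g}(r_1+\lambda)=H_{\bar g}(r_1+\lambda)/u(r_1+\lambda)$ is strictly larger than its counterpart there. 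In the borderline case $\beta=1$ (resp.\ $m=0$) the leading parts of $H^+_{\tilde g}$ and of the flat exterior mean curvature $H^-_{\tilde g}$ only break even, so the whole point is that the gain over this borderline is bounded below by a definite multiple of $\partial_r u_{r_1}|_{r_1}$; here I would feed in the quantitative lower bounds of Lemma \ref{estimateBVP2} --- namely $\delta/(r_1\log r_1)$ when $n=3$ and $\delta/r_1^{\,n-2+\tau}$ when $n\ge 4$ --- and choose $\tau>0$ small enough (depending on how much $\mu$ exceeds $2n-6$) and $r_1$ large so that this gain dominates the $O(r_1^{-\mu})$, $O(r_1^{-(n-1)})$ and $O(r_1^{-(\mu+1)})$ remainders permitted by the $\mu$-condition, yielding $H^+_{\tilde g}(r_1+\lambda)>H^-_{\tilde g}(r_1+\lambda)$. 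This mean curvature bookkeeping is the main obstacle; note that Lemma \ref{estimateBVP1} enters upstream precisely to keep $u_{r_1}$ uniformly below $1$ on a fixed large sphere, which is what makes the barrier comparison in Lemma \ref{estimateBVP2} available.

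Finally, gluing opposite faces of a large flat cube times $\mathbb S^1$ and applying Miao's corner-smoothing \cite{Miao2002} along the two Lipschitz hypersurfaces $\partial\Omega_{r_1}$ and $\partial\Omega_{r_1+\lambda}$, I obtain the same closed manifold $M_0^n$ as in the inequality part --- of the form $(\mathbb T^n,i)\sharp(M_2,\phi_2)$ with homotopically non-trivial $\mathbb S^1$-factor --- now carrying a metric with $R\ge 0$. Since the mollification leaves the metric unchanged away from thin collars about the corners, and $p$ lies deep inside $\Omega_{r_1}$ once $r_1$ is large, this metric still has $R>0$ near the image of $p$, contradicting the statement in Theorem \ref{Thm: homotopy nontrivial no PSC} that every smooth metric with non-negative scalar curvature on $M_0^n$ is flat. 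Hence $R_g\equiv 0$. The ALG analogue is identical, with Theorem \ref{2torus} replacing Theorem \ref{Thm: homotopy nontrivial no PSC}.
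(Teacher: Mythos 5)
Your proposal is essentially the paper's proof: argue by contradiction from $R_g(p)>0$, conformally deform on $\Omega_{r_1}$ using the solution $u_{r_1}$ of \eqref{u_r} so that the new metric still has $R\ge 0$, $R(p)>0$ and strictly larger boundary mean curvature, push the gain through the quasi-spherical collar using the quantitative lower bound of Lemma \ref{estimateBVP2} (with Lemma \ref{estimateBVP1} feeding in upstream, and $\tau$ chosen small), glue and smooth, and invoke Theorem \ref{Thm: homotopy nontrivial no PSC}. Your conformal transformation of the boundary mean curvature, $H_{\hat g}(r_1)=H_g(r_1)+\tfrac{2(n-1)}{n-2}\partial_r u_{r_1}\big|_{r_1}$, is the correct one and in fact corrects a misprinted coefficient in the paper's display.

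The one place to tighten is your last paragraph. Miao's mollification by itself does not produce a smooth metric with $R\ge 0$: in the collar around $\partial\Omega_{r_1}$, where $H^+=H^-$, the mollified scalar curvature can dip slightly below zero. It is the subsequent conformal deformation in \cite{Miao2002} that repairs this, and that deformation changes the metric globally, so one cannot deduce $R>0$ near $p$ from the locality of the mollification. The cleaner way to conclude (and what the paper does, following the inequality part) is to observe that the strict mean-curvature surplus at $\partial\Omega_{r_1+\lambda}$ forces the conformal deformation to yield a smooth PSC metric on $M_0^n$; that contradicts the nonexistence clause of Theorem \ref{Thm: homotopy nontrivial no PSC}, and a fortiori the rigidity clause you quote.
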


\begin{proof}If not, $R(p)>0$, $ u_{r}$ defined in \eqref{u_r}, consider
$$g_{r_{1}}=u_{r_{1}}^{\frac{4}{n-2}}g$$

defines a new metric on $\Omega_{r_{1}}$, and we assume $r_{1}$ large. By the definition of $u_{r_{1}}$, 
\begin{center}
 $R_{g_{r_{1}}}\geqslant 0$ on $\Omega_{r_{1}}$, and $R_{g_{r_{1}}}(p)>0$. 
\end{center}

The mean curvature of $g_{r_{1}}$ on $\partial \Omega_{r_{1}}$ is

\begin{equation}
H_{g_{r_{1}}}(r_{1})=\frac{n-2}{8(n-1)}u_{r_{1}}^{-\frac{4}{n-2}}\partial_{N}u_{r_{1}}|_{r=r_{1}}+u_{r_{1}}^{-2}H_{g}(r_{1})=\frac{n-2}{8(n-1)}\partial_{N}u_{r_{1}}|_{r=r_{1}}+H_{g}(r_{1})
\end{equation}

$N$ is the outer unit normal vector, and close to $\partial_{r}$.

\textbf{Case 1} $n=3$:

Follow from \eqref{meancurvature1} and Lemma $\ref{Ur}$, we have

\begin{equation}\label{esitimate H}
H_{\tilde{g}}^{+}(r_1)=H_{g_{r_{1}}}(r_{1})\geqslant\frac{1}{r_{1}}+\frac{\delta}{r_{1}\log r_{1}}
\end{equation}

here $\delta$ be positive constant, and may change from line to line. As before, we define a new metric on $M^{3}$,
\begin{equation}\label{newg}
\tilde{g}=\left\{
\begin{aligned}
g_{r_{1}}, &\quad \text{if $r\leqslant r_{1}$},\\
u^2dr^2+
r^2 d\phi^2 +d\theta^2+[1-\frac{1}{\lambda}(r-r_{0})]\sigma_{r}, &\quad \text{if $r_1\leqslant r\leqslant  r_1+\lambda$},\\
dx^{2}+d\theta^2,& \quad \text{if $r_1+\lambda\leqslant r $}
\end{aligned}
\right.
\end{equation}

Here choose $\lambda=1$, $u$ satisfied \eqref{PSCExtensionQSeq}. By \eqref{bargn=3} and \eqref{esitimate H}, for $r_{1}$ large, we have
$$u(r_{1},\cdot)\leqslant 1-\frac{\delta}{\log r_{1}}$$

From \eqref{Mestimate}, $M\leqslant C r^{-1}_1$ here, then
$$u(r_1+\lambda,\cdot)\leqslant 1-\frac{\delta}{\log r_{1}}$$

then
\begin{equation}
\begin{aligned}
H_{\tilde{g}}^{+}(r_1+\lambda)=\frac{H_{\bar{g}}(r_{1}+\lambda)}{u(r_{1}+\lambda,\cdot)}&\geqslant \frac{1}{r_{1}+\lambda}+\frac{\delta}{(r_{1}+\lambda)\log (r_{1}+\lambda)}\\
&>\frac{1}{(r_1+\lambda)}=H_{\tilde{g}}^{-}(r_1+\lambda)
\end{aligned}
\end{equation}

then same discussion follow the inequality part proof of Theorem \ref{ALFPMT}, contradicted to Theorem \ref{Thm: homotopy nontrivial no PSC}.

\textbf{Case 2} $n\geqslant4$:

The new metric $\tilde{g}$ on $M^{n}$ defined in the same form of \eqref{newg}, choose $\tau \ll 1$, and let  
$$\lambda=r_{1}^{-\epsilon}, n-4+\tau<\epsilon<\mu-(n-2-\tau)$$

this can be done because $\mu>2n-6$. Follow from \eqref{meancurvature1} and Lemma $\ref{Ur}$, we have
\begin{equation}\label{esitimate H2}
H_{\tilde{g}}^{+}(r_1)=H_{g_{r_{1}}}(r_{1})\geqslant\frac{n-2}{r_{1}}+\frac{\delta}{r_{1}^{n-2+\tau}}
\end{equation}

here $\delta$ be positive constant, and may change from line to line .By \eqref{bargn=4} and \eqref{esitimate H2}, for $r_{1}$ large, we have

$$u(r_{1},\cdot)=\frac{H_{\bar{g}}(r_{1})}{H_{\tilde{g}}^{+}(r_{1})}\leqslant 1-\frac{\delta}{r_{1}^{n-3+\tau}}+o(r_{1}^{-(n-3+\tau)})$$

by $M\leqslant C r_{1}^{-1}$, we have
$$u(r_{1}+\lambda,\cdot)\leqslant 1-\frac{\delta}{r_{1}^{n-3+\tau}}$$

hence

\begin{equation}
\begin{aligned}
H_{\tilde{g}}^{+}(r_1+\lambda)=\frac{H_{\bar{g}}(r_{1}+\lambda)}{u(r_{1}+\lambda,\cdot)}&\geqslant \frac{n-2}{r_{1}+\lambda}+\frac{\delta}{(r_{1}+\lambda)^{n-2+\tau}}\\
&>\frac{n-2}{r_{1}+\lambda}= H_{\tilde{g}}^{-}(r_1+\lambda)
\end{aligned}
\end{equation}

then same discussion follow the inequality part proof of Theorem \ref{ALFPMT}, contradicted to Theorem \ref{Thm: homotopy nontrivial no PSC}.

\end{proof}

\begin{lemma}\label{RicciflowkeepALF}

Assume $(M^n,g)$ be an ALF manifold with $\beta=1$ if $n=3$, or $m=0$, in $n\geqslant 4$, i.e $g=dx^{2}+d\theta^2 +\mathbb \sigma$, moreover, we assume $\sum\limits^4_{k=3}\sum\limits_{|\alpha|=k}r^k|\partial^\alpha \sigma|=O(r^{-\mu})$. Consider Ricci flow

\begin{equation}\label{ricciflow}
\left\{
\begin{aligned}
\partial_{t}g(t)&=-2Ric(t)\\
g(0)&=g,
\end{aligned}
\right.
\end{equation}

then extsts $T>0$, \eqref {ricciflow} have smooth solution in $[0,T]$, and $(M,g(t))$, $t\in(0,T]$ is still ALF  with $\beta=1$ if $n=3$ ; with $m=0$ if $n\geqslant4$.

\end{lemma}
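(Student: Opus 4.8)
The plan is to combine Shi's short-time existence theorem for complete manifolds with bounded curvature with a weighted parabolic analysis of the Ricci--DeTurck flow, so as to track the asymptotic structure along the flow. First I would record that $(M,g)$ has bounded geometry: since the curvature tensor involves only two derivatives of the metric, the decay $g=dx^2+d\theta^2+\sigma$ with $r^k|\partial^\alpha\sigma|=O(r^{-\mu})$ for $|\alpha|\le 2$ gives $|\Rm_g|_g=O(r^{-\mu-2})$, hence $\sup_M|\Rm_g|<\infty$, and $\inj(M,g)$ is bounded below because near infinity the metric is $C^2$-close to the flat product $\mathbb R^{n-k}\times\mathbb T^k$. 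By Shi's theorem, \eqref{ricciflow} therefore admits a smooth solution $g(t)$ on some $[0,T]$ with $\sup_M|\Rm_{g(t)}|$ bounded on $[0,T]$ (and all spatial derivatives of curvature bounded for $t>0$), and such a bounded-curvature solution is unique by Chen--Zhu. The content of the lemma is thus to show that $g(t)$ keeps the ALF normal form with $\beta=1$ (resp.\ $m=0$); equivalently, that $g(t)-\bar g$ still decays like a negative power of $r$ together with its first two derivatives, where $\bar g$ is a fixed complete smooth metric on $M$ which is flat and equal to $dx^2+d\theta^2$ near infinity. Near infinity $\bar g$ is then a static (Ricci-flat) solution of the flow.

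Next I would pass to the Ricci--DeTurck flow with background $\bar g$: setting $V^k=g^{ij}(\Gamma^k_{ij}(g)-\bar\Gamma^k_{ij})$, consider $\partial_t\hat g=-2\Ric(\hat g)+\mathcal L_{V(\hat g,\bar g)}\hat g$ with $\hat g(0)=g$, a strictly parabolic system. Writing $\hat g(t)=\bar g+h(t)$ and using that $\bar g$ is flat near infinity, so that $\bar\nabla$ is the flat connection there and all background-curvature terms vanish, the system takes, on $M\setminus\Omega_{r_0}$, the schematic form
$$
\partial_t h=\Delta_{\bar g}h+\bar g^{-1}\ast\bar g^{-1}\ast h\ast\bar\nabla^2h+\mathcal B(h)\ast\bar\nabla h\ast\bar\nabla h,
$$
where $\Delta_{\bar g}$ is the \emph{flat} rough Laplacian of $\mathbb R^{n-k}\times\mathbb T^k$ acting on symmetric $2$-tensors and $\mathcal B$ is a smooth tensor-valued function of $h$ built from $(\bar g+h)^{-1}$. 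The key structural point is that the nonlinearity is quadratic in $(h,\bar\nabla h,\bar\nabla^2 h)$; since differentiation improves decay, the hypotheses — now including $\sum_{3\le k\le 4}\sum_{|\alpha|=k}r^k|\partial^\alpha\sigma|=O(r^{-\mu})$ — guarantee $|\bar\nabla^\alpha\sigma|=O(r^{-\mu-|\alpha|})$ for $|\alpha|\le 4$, so the nonlinear terms decay like $r^{-2\mu-2}$, strictly faster than the weight $r^{-\mu}$.

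I would then solve this system by a contraction mapping in the weighted parabolic Hölder space $r^{-\mu}C^{2+\alpha,\,1+\alpha/2}(M\times[0,T'])$ adapted to the asymptotically product end, the weight being taken bounded on the compact core and equal to $r^{-\mu-j}$ on $j$-th spatial derivatives near infinity. On the model end the linear operator $\partial_t-\Delta_{\bar g}$ on $\mathbb R^{n-k}\times\mathbb T^k$ is, for short time, an isomorphism from this weighted space with prescribed weighted-$C^{2+\alpha}$ initial data onto weighted-$C^\alpha$ forcing, because the flat heat semigroup — a Gaussian in the $\mathbb R^{n-k}$ directions times a bounded theta-type kernel in the $\mathbb T^k$ directions — preserves the weight $r^{-\mu}$ up to Hölder norms on bounded time intervals; this is where the $C^{3}$--$C^4$ decay of $\sigma$ is used, namely to place $\sigma$ in the correct initial-data space and to bootstrap smoothness. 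Since the nonlinear terms land in $r^{-2\mu-2}C^\alpha\subset r^{-\mu}C^\alpha$ and are Lipschitz with small constant for short time or small data (the weighted Hölder spaces multiply, $r^{-a}C^\alpha\cdot r^{-b}C^\alpha\subset r^{-(a+b)}C^\alpha$), the Duhamel map is a contraction and yields a unique $h(t)=O(r^{-\mu})$, together with all derivatives, on $[0,T']$; a continuation argument using the a priori bound $\sup_M|\Rm_{g(t)}|<\infty$ from Shi's theorem (which forbids the weighted norm from blowing up while the curvature stays bounded) extends this to $[0,T]$, so $\hat g(t)=\bar g+h(t)$ is ALF with $\beta=1$ (resp.\ $m=0$).

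Finally, to undo the DeTurck gauge, let $\varphi_t$ be the diffeomorphisms solving $\partial_t\varphi_t=-V(\hat g(t),\bar g)\circ\varphi_t$, $\varphi_0=\id$. Since $V=O(r^{-\mu-1})$ decays, $\varphi_t$ tends to the identity at infinity in the relevant topology, so $g(t):=\varphi_t^\ast\hat g(t)$ solves \eqref{ricciflow}, is smooth with bounded curvature, and is again ALF with $\beta=1$ (resp.\ $m=0$); by Chen--Zhu uniqueness it coincides with Shi's solution. For the ALG case the argument is identical after replacing $\mathbb S^1$ by $\mathbb T^2$ throughout. I expect the main obstacle to be the weighted linear parabolic theory on the non-compact product end — in particular, verifying that the flat heat semigroup on $\mathbb R^{n-k}\times\mathbb T^k$ preserves the decay weight $r^{-\mu}$ up to the relevant parabolic Hölder norms on bounded time intervals, and choosing the weighted tensor spaces so that the linear isomorphism and the quadratic estimates both close; once these spaces are fixed the nonlinear estimates are routine.
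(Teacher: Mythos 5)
Your proposal is correct in outline but takes a genuinely different route from the paper's proof. The paper argues entirely at the level of curvature quantities along the unmodified Ricci flow: after invoking Shi's short-time existence with bounded curvature, it first establishes time-uniform bounds $|\Rm|,|\nabla\Rm|,|\nabla^2\Rm|\leq C$ by applying Shi-type cutoff-function maximum-principle arguments (using Shi's inequality $\partial_t|\Rm|^2\leq\Delta|\Rm|^2-2|\nabla\Rm|^2+C$ and its derivatives), then shows $\partial g(t)=O(1)$ by a Gronwall estimate, and the heart of the argument is a \emph{weighted} maximum principle applied to $\varphi_1=r^{2\mu+4}|\Rm|^2$, $\varphi_2=r^{2\mu+6}|\nabla\Rm|^2$, $\varphi_3=r^{2\mu+8}|\nabla^2\Rm|^2$: each is shown to satisfy $\partial_t\varphi\leq\Delta\varphi+C\varphi$ on the noncompact manifold, so the pointwise decay $|\Rm|=O(r^{-\mu-2})$, $|\nabla\Rm|=O(r^{-\mu-3})$, $|\nabla^2\Rm|=O(r^{-\mu-4})$ propagates along the flow; finally integrating $\partial_t g=-2\Ric$ in $t$ and differentiating in space gives the decay of $g(t)-g(0)$ and its first two derivatives. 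Your DeTurck gauge plus contraction in weighted parabolic H\"older spaces works at the level of the metric perturbation $h=\hat g-\bar g$ rather than curvature, converting the problem into strict parabolicity plus quadratic smallness; this is a standard and arguably more systematic route for preserving asymptotics under geometric flows, and avoids the somewhat delicate weighted noncompact maximum principle that the paper cites from Chow et al., but it leaves the weighted heat-semigroup isomorphism on $\mathbb R^{n-k}\times\mathbb T^k$ as an unverified (if plausible) key lemma and requires the extra step of undoing the gauge and checking that the diffeomorphisms $\varphi_t$ stay bounded. In short, the paper gets away with a shorter direct argument by exploiting the evolution inequalities for $|\Rm|^2$ and its derivatives and a weighted noncompact maximum principle, while your proposal trades that for a linear-plus-Duhamel contraction in weighted spaces; both use the $C^3$--$C^4$ decay hypothesis in the same way, to give the initial curvature (resp.\ initial data) the needed decay up to two extra derivatives.
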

\begin{proof}By Theorem 1 of Shi-WanXiong\cite{1989Deforming}, there exists a positive number $T$, the solution of \eqref{ricciflow} exist for $t\in [0,T]$. Let $Rm$ be the curvature tenser of $g(t)$, directly we have
$$Rm|_{t=0}=O(r^{-\mu-2}),\nabla Rm|_{t=0}=O(r^{-\mu-3}),\nabla^{2} Rm|_{t=0}=O(r^{-\mu-4})$$

Now,we are going to proof $\sum\limits^2_{k=0}\sum\limits_{|\alpha|=k}r^k|\partial^\alpha (g(t)-g(0))|=O(r^{-\mu})$, $t\in (0,T]$, then the Lemma was established .

Step 1 : $|Rm|\leqslant C$, $|\nabla Rm|\leqslant C$, $|\nabla^{2} Rm|\leqslant C$, $t\in [0,T]$, $C$ be a positive constant depends only on $g$, and may change from line to line, $|\cdot|$ is the tensor norm here.

From \cite{1989Deforming}, we have
$$|Rm|\leqslant C, C^{-1}g(0)\leqslant g(t)\leqslant Cg(0)$$

To estimate $|\nabla Rm|$, let
$$\varphi (x,t)=\xi|\nabla Rm|^{2}$$

where $\xi$ from \cite{1989Deforming} section7.(19), has compact support, satisfied (20)(56) in section7 of \cite{1989Deforming}. Direct calculation
 show
$$\partial_{t}\varphi\leqslant \Delta \varphi+(-\Delta \xi+C\frac{|\nabla \xi|^{2}}{\xi}+C\xi)|\nabla Rm|^{2}$$

by(20)(56),
$$\partial_{t}\varphi\leqslant \Delta \varphi+C_{2}|\nabla Rm|^{2}$$

meantime we have
$$\partial_{t}|Rm|^{2}\leqslant\Delta |Rm|^{2}-2|\nabla Rm|^{2}+C_{1}$$

then choose constant $a$ large enough, then
$$\partial_{t}(a|Rm|^{2}+\varphi)\leqslant\Delta(a|Rm|^{2}+\varphi)+C_{1}a$$

Use maximum principle in compact domain, and fact that $|Rm|\leqslant C$, $(a|Rm|^{2}+\varphi)|_{t=0}\leqslant C$, we have $\varphi \leqslant C,t\in [0,T]$, hence $|\nabla Rm|\leqslant C$. Similarly, consider $\partial_{t}(a|\nabla Rm|^{2}+\xi |\nabla^{2}Rm|^{2})$, we get $|\nabla^{2}Rm|\leqslant C$.

Step 2 : $\partial g(t)=O(1),t\in [0,T] $.

For any fixed $x\in M^{n}$, let 
$$\varphi_{0}(t)=|\partial (g(x,t)-g(x,0))|^{2}$$

we have
 $$\partial_{t}\varphi_{0}\leqslant C(|\nabla Rm|^{2}+\varphi_{0}|Rm|^{2}+\varphi_{0}|^{2})\leqslant C+C\varphi_{0} $$

this means  $\varphi_{0}(t)\leqslant C(\varphi_{0}(0)+1)\leqslant C$, i.e $\partial g=O(1)$.

Step 3 : $|Rm|=O(r^{-\mu-2})$, $|\nabla Rm|=O(r^{-\mu-3})$, $|\nabla^{2} Rm|=O(r^{-\mu-4})$, $ t\in [0,T]$.

Let
$$\varphi_{1}(x,t)=r^{\alpha}|Rm|^{2},\alpha=2\mu+4$$

so $\varphi_{1}(0)\leqslant C$, then we show
\begin{equation}\label{maxvarphi}
\partial_{t}\varphi_{1}(x,t)\leqslant \Delta \varphi_{1}(x,t)+C\varphi_{1}(x,t)
\end{equation}

due to step 1, use maximum principle of noncompact manifolds (see Theorem 7.42 of \cite{Chow2006Hamilton}), we get $\varphi_{1}(x,t)\leqslant C$, so $|Rm|=O(r^{-\mu-2})$, $t\in [0,T]$. To get \eqref{maxvarphi}, by a directc computation, we have
\begin{displaymath}
\begin{aligned}
\partial_{t}\varphi_{1}&=r^{\alpha}\partial_{t}|Rm|^{2}=r^{\alpha}(\Delta|Rm|^{2}-2|\nabla Rm|^{2}+Rm\ast Rm\ast Rm)\\
\Delta(r^{\alpha}|Rm|^{2})&=r^{\alpha}\Delta|Rm|^{2}+2\alpha r^{\alpha-1}\nabla r \nabla|Rm|^{2}+(\Delta r^{\alpha})|Rm|^{2}\\
\end{aligned}\end{displaymath}

hence
$$\partial_{t}\varphi_{1}=\Delta \varphi_{1}-2\alpha r^{\alpha-1}\nabla r \nabla|Rm|^{2}-(\Delta r^{\alpha})|Rm|^{2}-2r^{\alpha}|\nabla Rm|^{2}+r^{\alpha}Rm\ast Rm\ast Rm$$

Estimate the right terms, see that $|\nabla |Rm|^{2}|\leqslant 2|\nabla Rm||Rm|$, by Cauchy-Schwarz inequality,
$$
-2\alpha r^{\alpha-1}\nabla r \nabla|Rm|^{2}-2r^{\alpha}|\nabla Rm|^{2}\leqslant Cr^{\alpha-2}|\nabla r|^{2}|Rm|^{2}\leqslant C \varphi_{1}
$$

By step 2, 
$$\Delta r^{\alpha}=O(1)\partial\partial r^{\alpha}+O(1)\partial r^{\alpha}=O(1)r^{\alpha-1}$$

so 
$$(\Delta r^{\alpha})|Rm|^{2}\leqslant C \varphi_{1}$$

And by $|Rm|=O(1)$, we have
$$r^{\alpha}Rm\ast Rm\ast Rm\leqslant C\varphi_{1}$$

thus we get \eqref{maxvarphi}.
Let 
$$\varphi_{2}=r^{\beta}|\nabla Rm|^{2}, \beta=2\mu+6, \varphi_{3}=r^{\gamma}|\nabla^{2}Rm|^{2}, \gamma=2\mu+8$$

then same argument, we get the control of $|\nabla Rm| $ and $|\nabla^{2}Rm|$.

Step 4 : $\sum\limits^2_{k=0}\sum\limits_{|\alpha|=k}r^k|\partial^\alpha (g(t)-g(0))|=O(r^{-\mu})$, $t\in (0,T]$.

Integral Ricci flow at a point, 
$$g(t)-g(0)=\int_{0}^{t}-2Ric(s) ds$$
the norm of the right term was controlled by $C|Rm|$, take the derivative of both sides, follow step 3, we complete the proof.

\end{proof}

\begin{theorem}\label{ALFRigidity2}

Let $(M^n,g)$ be an ALF manifold with scalar curvature $R_g\geqslant 0$ and $\mu$-condition, and its $\mathbb S^1$-factor at the infinity represents a non-trivial element in $\pi_1(M)$, moreover, we assume $\sum\limits^4_{k=3}\sum\limits_{|\alpha|=k}r^k|\partial^\alpha \sigma|=O(r^{-\mu})$. If $\beta =1$ in $n=3$, or $m=0$ in $n\geqslant 4$, then $g$ is Ricci flat.

\end{theorem}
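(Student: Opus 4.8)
The plan is to bootstrap from Proposition \ref{ALFRigidity1}, which already gives $R_g\equiv 0$, to the stronger conclusion $\Ric_g\equiv 0$ by running the Ricci flow and reapplying Proposition \ref{ALFRigidity1} at each positive time.

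First I would note that all the hypotheses of Proposition \ref{ALFRigidity1} are in force (the $\mu$-condition, $n\leq 7$, homotopically nontrivial $\mathbb S^1$-factor, and $\beta=1$ for $n=3$ or $m=0$ for $n\geq 4$), so $R_g\equiv 0$ on $M$. Next, by Lemma \ref{RicciflowkeepALF} --- this is where the extra decay $\sum_{k=3}^{4}\sum_{|\alpha|=k}r^k|\partial^\alpha\sigma|=O(r^{-\mu})$ is used --- the Ricci flow $g(t)$ with $g(0)=g$ exists on some interval $[0,T]$, each $(M,g(t))$ remains an ALF manifold satisfying the $\mu$-condition with $\beta=1$ (if $n=3$) or $m=0$ (if $n\geq 4$), and, as established in the proof of that lemma, the curvature is uniformly bounded: $|\Rm_{g(t)}|\leq C$ on $[0,T]$. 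Since the diffeomorphism type of $M$ is unchanged, the $\mathbb S^1$-factor at infinity still represents a nontrivial class in $\pi_1(M)$.

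Now consider the evolution of scalar curvature along the flow,
$$
\partial_t R_{g(t)}=\Delta_{g(t)} R_{g(t)}+2|\Ric_{g(t)}|^2.
$$
Because $R_{g(0)}\equiv 0$ and the geometry is uniformly bounded, the maximum principle for complete noncompact manifolds (as in Theorem 7.42 of \cite{Chow2006Hamilton}, already invoked in the proof of Lemma \ref{RicciflowkeepALF}) yields $R_{g(t)}\geq 0$ for every $t\in[0,T]$. But then, for each fixed $t\in(0,T]$, the manifold $(M,g(t))$ satisfies every hypothesis of Proposition \ref{ALFRigidity1}, so $R_{g(t)}\equiv 0$. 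Hence $R_{g(t)}\equiv 0$ identically on $[0,T]$, and plugging $\partial_t R_{g(t)}\equiv 0$ back into the evolution equation forces $|\Ric_{g(t)}|^2\equiv 0$; in particular $\Ric_g=\Ric_{g(0)}\equiv 0$, which is the claim.

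The one genuinely delicate point is the maximum-principle step on the noncompact manifold: one must verify that nonnegativity of scalar curvature is actually propagated by the flow and, correspondingly, that Proposition \ref{ALFRigidity1} is applicable at each positive time. Both hinge on the uniform curvature bounds and the persistence of the ALF structure and $\mu$-condition supplied by Lemma \ref{RicciflowkeepALF}; with these granted, the remaining argument is the short bootstrap above. The ALG case is handled by the same scheme, using the ALG analogue of Proposition \ref{ALFRigidity1} and Theorem \ref{2torus} in place of Theorem \ref{Thm: homotopy nontrivial no PSC}, together with the rigidity analysis drawn from \cite{minerbe2008a} and the appendix of \cite{MR2855540} as indicated above.
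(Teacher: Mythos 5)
Your proof is correct and follows essentially the same route as the paper: apply Proposition \ref{ALFRigidity1} to get $R_g\equiv 0$, run the Ricci flow and use Lemma \ref{RicciflowkeepALF} to preserve the ALF structure with $\beta=1$ or $m=0$, apply the noncompact maximum principle to keep $R_{g(t)}\geq 0$, and then use Proposition \ref{ALFRigidity1} again at positive times to force $\Ric\equiv 0$. Your final step (substituting $R_{g(t)}\equiv 0$ into the evolution equation to read off $|\Ric|^2\equiv 0$) is a cleaner way of extracting the contradiction than the paper's invocation of $R>0$ for $t>0$, but the substance and all key lemmas are identical.
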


\begin{proof}By Proposition\ref{ALFRigidity1}, $R_{g} \equiv 0$, if $Ric_{g}$ not flat, by Lemma\ref{RicciflowkeepALF}, through Ricci flow, we get $(M^{n},g(t)),t\in(0,T]$, and is still ALF, with $\beta =1$ in $n=3$, or $m=0$ in $n\geqslant 4$. The scalar curvature satisfied
$$\partial_{t}R=\Delta R+2|Ric|^{2}$$

by the maximum principle of noncompact manifolds (see Theorem 7.42 of \cite{Chow2006Hamilton}), $R\geqslant 0$. If $Ric_{g}$ not flat, we have $R>0$, for $t>0$, which contradicted to Theorem\ref{ALFRigidity1}.

\end{proof}
\begin{proof}[Proof of the rigidity part of Theorem \ref{ALFPMT}]:
When $\beta =1$ in $n=3$, or $ m=0$ in $n\geqslant 4$, by Theorem \ref{ALFRigidity2}, we know $g$ is Ricci flat. When $n\geqslant 4$, follow the work of   Minerbe \cite{minerbe2008a}, our situation is  the example 1 in Page 929 of \cite{minerbe2008a}, as our order of $\sigma$ in hypothesis is large enough (in fact we only need $\mu>n-3$ here), we get $\mu_{g}^{GB}=0$ in his definition, by Theorem 1 of \cite{minerbe2008a}, $(M^{n},g)$ is flat and isometric to $\mathbb{R}^{n-1}\times \mathbb{S}^{1}$.
When $n=3$, see the appendix of \cite{MR2855540}, follow the proof of Theorem 1 in \cite{minerbe2008a}, as we assume $\mu>1$, there are $2$ linear independent $g$-parallel one-forms that are asymptotic to $dx^{1}$, $dx^{2}$ at the infinity, which implies $(M^{3},g)$ is flat and isometric to $\mathbb{R}^{2}\times \mathbb{S}^{1}$.

\end{proof}
\begin{remark}
To proof the rigidity part for ALG situation, same argument, we get that $g$ is Ricci flat, then by the appendix in \cite{MR2855540} and follow the proof of Theorem 1 in \cite{minerbe2008a}, for the order hypothesis of $\sigma$ here, there are $n-2$ linear independent $g$-parallel one-forms that are asymptotic to $\{dx^{i}\}_{i=1}^{n-2}$ at the infinity , which implies $ M^{n}$ isometric to $\mathbb{R}^{n-2}\times E$. $E$ is a compact $2$-manifold with flat metric, hence must be flat $\mathbb{T}^{2}$ with area $4\pi^{2}$.
\end{remark}

\section{Application to fill-in problems}\label{Sec: fill-in}
First we recall some notions for convenience. Given an orientable closed Riemannian manifold $(\Sigma,\gamma)$, an it admissible fill-in of $(\Sigma,\gamma)$ is a compact orientable Riemannian manifold $(\Omega,g)$ with non-negative scalar curvature and mean convex boundary such that the boundary is isometric to $(\Sigma,\gamma)$. The supremum total mean curvature of $(\Sigma,\gamma)$ with respect to admissible fill-ins is defined to be
$$
\Lambda_+(\Sigma,\gamma)=\sup\left\{\int_{\partial\Omega}H_{\partial\Omega}\,\mathrm d\sigma_g:\text{ $(\Omega,g)$ is an admissible fill-in of $(\Sigma,\gamma)$}\right\},
$$
where $H_{\partial\Omega}$ is the mean curvature of boundary $\partial\Omega$ with respect to the outer unit normal and $\mathrm d\sigma_g$ is the area element of $\partial\Omega$.

We are going to prove
\begin{theorem}\label{Thm: re fill-in 1}
Let $\Sigma_0$ be a convex hypersurface or curve in the Euclidean space $\mathbb E^n$ with total mean curvature $T_0$. If $(\Omega,g)$ is an admissible fill-in of the product manifold $\Sigma_0\times \mathbb S^1(l)$ such that the circle component is homotopically non-trivial in $\Omega$, then for $n\leq 6$ it holds
\begin{equation*}
\int_{\partial\Omega}H_{\partial\Omega}\,\mathrm d\sigma_g\leq 2\pi l T_0,
\end{equation*}
where $H_{\partial\Omega}$ is the mean curvature of $\partial\Omega$ with respect to the unit outer normal and $\mathrm d\sigma_g$ is the area element of $\partial\Omega$ with the induced metric. If $(\Omega,g)$ is an admissible fill-in of $\Sigma_0\times \mathbb S^1(l)$ with the equality above, then $(\Omega,g)$ is static with vanishing scalar curvature.
\end{theorem}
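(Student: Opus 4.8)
The plan is to reduce \ref{Thm: re fill-in 1} to the positive mass theorem for ALF manifolds (Theorem \ref{ALFPMT}) by capping the given fill-in $(\Omega,g)$ with a scalar-flat exterior of ALF type built from quasi-spherical metrics, in the spirit of \cite{shi2020total} and \cite{yuguang2019on}. The starting point is that the right-hand side $2\pi l\,T_0$ is exactly the total mean curvature of $\partial\Omega\cong\Sigma_0\times\mathbb S^1(l)$ when this hypersurface is isometrically embedded in the flat model $\mathbb E^n\times\mathbb S^1(l)$: there the circle factor is totally geodesic, so $\partial\Omega$ has mean curvature equal to that of $\Sigma_0$ in $\mathbb E^n$, whose total is $T_0$, and $\int_{\mathbb S^1(l)}1=2\pi l$. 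Thus \eqref{Eq: total mean curvature 1} is the positivity of a Brown--York-type mass relative to the model $\mathbb R^n\times\mathbb S^1$.

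First I would build the exterior collar. Let $\Omega_0\subset\mathbb E^n$ be the convex body bounded by $\Sigma_0$ and let $\{\Sigma_s\}_{s\ge0}$ be its outward equidistant hypersurfaces; these are convex with positive mean curvature $\bar H(s)$, their induced metrics $\gamma_s$ have nonnegative scalar curvature by the Gauss equation and convexity, and $\Sigma_0\times[0,\infty)\times\mathbb S^1(l)$ is diffeomorphic to the ALF end $(\mathbb R^n\setminus\mathbb B)\times\mathbb S^1$. On this collar I would seek a scalar-flat metric of quasi-spherical form $u^2\,\mathrm ds^2+\gamma_s+l^2\,\mathrm d\theta^2$; as recalled near \eqref{PSCExtensionQSeq}, the condition $R\equiv0$ is equivalent to a parabolic initial-value problem for the lapse $u$, and I would impose $u(0,\cdot)=\bar H(0)/H_{\partial\Omega}$ (using mean convexity, after a harmless perturbation if $H_{\partial\Omega}$ vanishes somewhere) so that the inner slice $\Sigma_0\times\mathbb S^1(l)$ of the collar has mean curvature exactly $H_{\partial\Omega}$ with respect to the outward normal.

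The main obstacle is the analysis of this equation: in the present product setting it is a degenerate parabolic equation — once the $\mathbb S^1$-symmetry of the slices is accounted for, the circle direction contributes no effective diffusion — so the standard quasi-spherical theory of \cite{Bartnik1993} and \cite{ST2002} does not apply directly. I would remove the degeneracy by exploiting the circle factor as indicated in the introduction, decomposing $u$ into Fourier modes along $\mathbb S^1(l)$ (equivalently passing to the $\mathbb Z$-periodic problem on the $\mathbb R$-cover): the zero mode solves a genuinely nondegenerate parabolic problem on $\Sigma_0\times[0,\infty)$, while each nonzero mode $\hat u_k$ acquires a favorable dissipative zeroth-order term. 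This should yield long-time existence with $u>0$, uniform control as $s\to\infty$, and decay of the non-$\mathbb S^1$-invariant part of the metric, so that the collar is ALF of Schwarzschild type asymptotic to $\mathbb R^n\times\mathbb S^1$ with asymptotic order satisfying the $\mu$-condition of Definition \ref{ALF} (if necessary one interposes an outer transition to an exact Schwarzschild model as in Proposition \ref{compactfy1}). Along the collar I would then run the Shi--Tam monotonicity computation, adapted to the extra $\mathbb S^1$ factor, for
$$
\mathfrak m(s)=\int_{\Sigma_s\times\mathbb S^1}\bigl(\bar H(s)-H_{g_+}(s)\bigr)\,\mathrm d\sigma,
$$
using $R\equiv0$, $R_{\gamma_s}\ge0$ and a Minkowski-type inequality for the convex $\Sigma_s$, obtaining that $\mathfrak m$ is monotone non-increasing in $s$ with $\lim_{s\to\infty}\mathfrak m(s)=c_n\,m$, where $m$ is the ALF mass of the collar and $c_n>0$.

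Finally I would glue $(\Omega,g)$ to the collar along $\partial\Omega$. The boundary metrics agree, and the boundary mean curvatures agree by the choice of $u(0,\cdot)$, so the result is a Lipschitz ALF metric with nonnegative distributional scalar curvature; Miao's mollification \cite{Miao2002} then gives a smooth ALF metric with $R\ge0$ and the same mass $m$. The circle at infinity is freely homotopic in the glued manifold to the $\mathbb S^1(l)$-factor of $\partial\Omega$, which is homotopically non-trivial in $\Omega$ by hypothesis and hence in the glued manifold (the collar retracts onto $\partial\Omega$, so $\pi_1$ of the union equals $\pi_1(\Omega)$), and the glued manifold has dimension $n+1\le7$. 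Hence Theorem \ref{ALFPMT} gives $m\ge0$, so
$$
2\pi l\,T_0-\int_{\partial\Omega}H_{\partial\Omega}\,\mathrm d\sigma_g=\mathfrak m(0)\ge c_n\,m\ge0,
$$
which is \eqref{Eq: total mean curvature 1}; when $\Sigma_0$ is a curve ($n=2$) one uses instead the three-dimensional conical-type statement of Theorem \ref{ALFPMT}, with $\beta\le1$ in place of $m\ge0$. In the equality case $\mathfrak m(0)=0$ forces $m=0$ and $\mathfrak m\equiv0$ along the collar; the rigidity in the Shi--Tam monotonicity then identifies the collar with the flat exterior $(\mathbb E^n\setminus\mathring\Omega_0)\times\mathbb S^1(l)$, making the ALF end exactly flat, so the rigidity part of Theorem \ref{ALFPMT} (whose decay hypotheses are then trivially satisfied) forces the glued manifold, hence $(\Omega,g)$ itself, to be flat; in particular it is static with vanishing scalar curvature, as claimed.
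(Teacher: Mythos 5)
Your overall strategy — cap $(\Omega,g)$ with a scalar-flat quasi-spherical collar, glue, and invoke a positive-mass-type result — is morally right, but two of the steps you lean on are not actually available, and a third is handled differently (and more simply) in the paper.

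The most serious gap is the claimed removal of the parabolic degeneracy by Fourier decomposition. The quasi-spherical equation $\bar H_t\,\partial_t u = u^2\Delta_{\bar\gamma_t}u + \tfrac12 R_{\bar\gamma_t}(u-u^3)$ is quasilinear: the coefficient $u^2$ in front of the Laplacian couples all Fourier modes of $u$ along $\mathbb S^1(l)$, so the zero mode does not ``solve a genuinely nondegenerate parabolic problem'' on its own, and the nonzero modes do not evolve by scalar dissipative equations. The paper instead runs a blow-up/rescaling argument: it defines $v_k = u\circ\Psi_k$ with $\Psi_k$ rescaling the time variable by $k$ (and, in the $n=2$ case, also stretching the $\theta$-coordinate by $k$, i.e.\ passing to the $k$-fold cover of $\mathbb S^1$, which is what the ``lifting property'' alludes to), so that the rescaled equations have uniformly elliptic limits; compactness plus the strong maximum principle applied to a limit then show the oscillation of $u$ (or of $v = t^{n-2}(u-1)$ when $n\ge 3$) goes to zero. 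This is not a mode decomposition, and the two ideas are not interchangeable here. A second gap is that your reduction targets the ALF positive mass theorem, which requires the collar metric to be ALF of Schwarzschild type with the decay order of Definition~\ref{ALF}. Knowing $u\to\text{const}$, or even $|u-1|\le Ct^{2-n}$, is not the same as knowing that the full error term $\sigma$ of the glued metric satisfies $\sum_{k\le 2}\sum_{|\alpha|=k} r^k|\partial^\alpha\sigma| = O(r^{-\mu})$ with $\mu$ in the required range; establishing such decay for the quasi-spherical metric would need additional estimates you have not supplied. The paper avoids this entirely: once $u<1$ (resp.\ the limit of $v$ is negative) past some finite $t$, it truncates the collar, caps with an exact flat product over a large cube, identifies opposite faces, and Miao-mollifies, thereby feeding a compact manifold directly into Theorem~\ref{Thm: homotopy nontrivial no PSC}. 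There is no appeal to an ALF PMT and hence no asymptotic bookkeeping.

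Finally, your rigidity argument is both incomplete and too strong. Invoking the rigidity case of Theorem~\ref{ALFPMT} requires the additional decay hypotheses $\sum_{k=3}^{4}\sum_{|\alpha|=k} r^k|\partial^\alpha\sigma|=O(r^{-\mu})$, and it also requires an ALF manifold whose topology is compatible with the conclusion ``isometric to $\mathbb R^{n-1}\times\mathbb S^1$,'' which may fail for the glued manifold since its fundamental group is that of $\Omega$. The conclusion you draw — that $\Omega$ itself is flat — is stronger than the theorem's claim (static with vanishing scalar curvature) and does not follow from the stated hypotheses. The paper's rigidity proof is much shorter: if $R_g>0$ somewhere in $\Omega$, a suitable conformal perturbation produces another admissible fill-in with strictly larger total mean curvature, contradicting the already-proved inequality; hence $R_g\equiv 0$, and the static property follows from Corvino's theorem.
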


From Theorem \ref{Thm: re fill-in 1} we have the following corollary:
\begin{corollary}
Let $(\Sigma,\gamma)$ be the flat $2$-torus $\mathbb S^1(l_1)\times \mathbb S^1(l_2)$ with $l_1\leq l_2$. Then it holds
$$
\Lambda_+(\Sigma,\gamma)= 4\pi^2 l_2.
$$
Moreover, if $(\Omega,g)$ is an admissible fill-in of $(\Sigma,\gamma)$ whose boundary has total mean curvature $4\pi^2l_2$, then it is static with vanishing scalar curvature.
\end{corollary}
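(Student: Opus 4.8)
The plan is to obtain the corollary directly from Theorem~\ref{Thm: re fill-in 1} by presenting the flat torus $(\Sigma,\gamma)=\mathbb S^1(l_1)\times\mathbb S^1(l_2)$ as a product $\Sigma_0\times\mathbb S^1(l)$, using that a round circle in $\mathbb R^2$ is a convex curve whose total mean curvature equals $T_0=2\pi$ irrespective of its size. There are two such presentations, and the key point is that at least one of them is always applicable to a given fill-in.

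For the upper bound, let $(\Omega,g)$ be an arbitrary admissible fill-in, so $\Omega$ is a compact orientable $3$-manifold whose boundary is a $2$-torus. The first step is the topological observation that at least one of the two circle factors of $\partial\Omega$ is homotopically non-trivial in $\Omega$: by the ``half lives, half dies'' principle (Poincar\'e--Lefschetz duality on $\Omega$) the kernel of $H_1(\partial\Omega;\mathbb Q)\to H_1(\Omega;\mathbb Q)$ is one-dimensional, hence cannot contain both generators, and a circle factor surviving in $H_1(\Omega;\mathbb Q)$ is in particular non-trivial in $\pi_1(\Omega)$. Now I would invoke Theorem~\ref{Thm: re fill-in 1} (here $n=2$, so its dimension hypothesis is automatic): if $\mathbb S^1(l_2)$ is homotopically non-trivial, take $\Sigma_0=\mathbb S^1(l_1)$ and $l=l_2$ to get $\int_{\partial\Omega}H_{\partial\Omega}\,\mathrm d\sigma_g\le 2\pi l_2 T_0=4\pi^2 l_2$; if instead only $\mathbb S^1(l_1)$ is homotopically non-trivial, take $\Sigma_0=\mathbb S^1(l_2)$ and $l=l_1$ and combine with $l_1\le l_2$ to get $\int_{\partial\Omega}H_{\partial\Omega}\,\mathrm d\sigma_g\le 4\pi^2 l_1\le 4\pi^2 l_2$. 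In either case $\Lambda_+(\Sigma,\gamma)\le 4\pi^2 l_2$.

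For the reverse inequality I would use the flat solid-torus fill-in $\Omega=\mathbb D^2(l_1)\times\mathbb S^1(l_2)$, where $\mathbb D^2(l_1)$ is the flat disk bounded by $\mathbb S^1(l_1)$; this is admissible, and as observed in the Remark following Theorem~\ref{Thm: fill-in 2} it realizes equality in the inequality of Theorem~\ref{Thm: re fill-in 1}, so $\int_{\partial\Omega}H_{\partial\Omega}\,\mathrm d\sigma_g=4\pi^2 l_2$ and hence $\Lambda_+(\Sigma,\gamma)=4\pi^2 l_2$. Finally, for the rigidity clause, suppose an admissible fill-in $(\Omega,g)$ attains $\int_{\partial\Omega}H_{\partial\Omega}\,\mathrm d\sigma_g=4\pi^2 l_2$. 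In the dichotomy above, the case in which only $\mathbb S^1(l_1)$ survives would give the bound $4\pi^2 l_1$, which then forces $l_1=l_2$; so in every case the attained value equals the right-hand side of the estimate in the instance of Theorem~\ref{Thm: re fill-in 1} being applied, equality holds there, and its rigidity part yields that $(\Omega,g)$ is static with vanishing scalar curvature. I expect no genuine obstacle: the only points needing care are this bookkeeping between the two product presentations of $\Sigma$ and the half-lives-half-dies input that a circle factor always survives in $\Omega$, while all the analytic substance is already contained in Theorem~\ref{Thm: re fill-in 1}.
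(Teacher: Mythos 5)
Your proof is correct and reaches the same conclusion as the paper's, but by a genuinely different topological route. Where you invoke Poincar\'e--Lefschetz duality (``half lives, half dies'') to show that the kernel of $H_1(\partial\Omega;\mathbb Q)\to H_1(\Omega;\mathbb Q)$ is one-dimensional and hence cannot contain both circle generators, the paper instead argues via Dehn's lemma: if $\mathbb S^1(l_1)$ were nullhomotopic in $\Omega$ it would bound an embedded compressing disk, forcing $\Omega\cong(\mathbb D^2\times\mathbb S^1)\sharp\Omega'$ for a closed $\Omega'$, whence $\mathbb S^1(l_2)$ generates a $\mathbb Z$-factor of $\pi_1(\Omega)$ and is nontrivial. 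Both establish that at least one circle factor survives in $\pi_1(\Omega)$, after which both proofs feed into Theorem~\ref{Thm: re fill-in 1} in the same two ways, produce the bound $4\pi^2l_2$, exhibit the solid-torus fill-in $\mathbb D^2(l_1)\times\mathbb S^1(l_2)$ to get equality, and pass rigidity back to Theorem~\ref{Thm: re fill-in 1}. Your homological argument is arguably the more economical of the two --- it uses only duality on a compact orientable $3$-manifold with boundary, treats both circle factors symmetrically, and avoids the specifically $3$-dimensional machinery of Dehn's lemma and the structure theorem for $\Omega$, none of whose extra information is actually needed. The paper's Dehn's lemma route yields the sharper structural statement $\Omega\cong(\mathbb D^2\times\mathbb S^1)\sharp\Omega'$, but nothing in the corollary uses it. You also spell out the rigidity bookkeeping (that the case where only $\mathbb S^1(l_1)$ survives forces $l_1=l_2$ at equality) more explicitly than the paper, which simply refers the last statement back to Theorem~\ref{Thm: re fill-in 1}.
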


\begin{proof}
This is a direct consequence from the $3$-dimensional topology. Let $(\Omega,g)$ be an admissible fill-in of $(\Sigma,\gamma)$. If $\mathbb S^1(l_1)$ is homotopically non-trivial in $\Omega$, then it follows from Theorem \ref{Thm: re fill-in 1} that
$$
\int_{\partial\Omega}H_{\partial\Omega}\,\mathrm d\sigma_g\leq 4\pi^2l_2.
$$
Otherwise $\mathbb S^1(l_1)$ is homotopic to a point in $\Omega$. From Dehn's lemma we can deduce that $\Omega$ is in the form of $\mathbb D^2\times \mathbb S^1\sharp \Omega'$ for an orientable closed $3$-manifold $\Omega'$, where $\mathbb S^1(l_1)$ is the boundary of $\mathbb D^2$ and $\mathbb S^1(l_2)$corresponds to the $\mathbb S^1$-component. It turns out that $\mathbb S^1(l_2)$ is homotopically non-trivial in $\Omega$. As a result of Theorem \ref{Thm: re fill-in 1} we have
$$
\int_{\partial\Omega}H_{\partial\Omega}\,\mathrm d\sigma_g\leq 4\pi^2l_1\leq 4\pi^2l_2.
$$
On the other hand, the product manifold $\mathbb D^2(l_1)\times \mathbb S^1(l_2)$ provides an admissible fill-in of $(\Sigma,\gamma)$ whose boundary has total mean curvature $4\pi^2 l_2$ and so it follows $\Lambda_+(\Sigma,\gamma)=4\pi^2l_2$. The last statement comes from that of Theorem \ref{Thm: re fill-in 1}.
\end{proof}
In the following, we divide the proof of Theorem \ref{Thm: re fill-in 1} into two cases depending on dimensions.
\begin{proof}[Proof of Theorem \ref{Thm: re fill-in 1} when $n=2$]
In this case, $\Sigma$ is isometric to the flat $2$-torus $\mathbb S^1(l_1)\times \mathbb S^1(l)$ and the total mean curvature $T_0$ of $\Sigma_0$ is always $2\pi$. From the polar coordinate of $\mathbb E^2$ we have the diffeomorphism
$$
\Phi:\mathbb S^1\times \mathbb S^1\times[0,+\infty)\to (\mathbb E^2-\mathbb D^2(l_1))\times \mathbb S^1(l)
$$
such that the pullback metric can be written as
$
\bar g=(l_1+t)^2\mathrm d\theta_1^2+l^2\mathrm d\theta_2^2+\mathrm dt^2
$, where $\mathbb D^2(l_1)$ is the disk with radius $l_1$ in $\mathbb E^2$ and $\mathrm d\theta_i$ is the length element of unit circle $\mathbb S^1$. With a smooth positive function $u_0$ on $\mathbb S^1\times \mathbb S^1$ to be determined later, we consider the quasi-spherical metric equation
\begin{equation}\label{Eq: quasispherical 1}
\frac{1}{t}\frac{\partial u}{\partial t}=u^2\Delta_{\gamma_t}u,\quad u(\cdot,0)=u_0,
\end{equation}
where $\gamma_t$ is the induced metric of the hypersurface $\mathbb S^1\times \mathbb S^1\times\{t\}$. The geometric meaning of above equation is that the associated metric $\tilde g=(l_1+t)^2\mathrm d\theta_1^2+l^2\mathrm d\theta_2^2+u^2\mathrm dt^2$ on $\mathbb S^1\times \mathbb S^1\times[0,+\infty)$ has vanishing scalar curvature.

Now we analyze equation \eqref{Eq: quasispherical 1}. From parabolic maximum principle, it is easy to see the a priori estimate
\begin{equation}\label{Eq: u bounded 1}
\inf_{\mathbb S^1\times \mathbb S^1} u_0\leq u\leq \sup_{\mathbb S^1\times\mathbb S^1}u_0
\end{equation}
and so the solution $u$ exists all the time. We are going to show that $u(\cdot,t)$ converges to a constant as $t$ tends to infinity. For our purpose, given any positive integer $k$ we define the following map
$$
\Psi_k: \mathbb S^1\times \mathbb S^1\times [0,4]\to \mathbb S^1\times \mathbb S^1\times [k,5k],\quad (\theta_1,\theta_2,\tau)\mapsto (\theta_1,k\theta_2,(\tau+1)k)
$$
and $v_k=u\circ\Psi_k$. Then we can compute
\begin{equation}\label{Eq: solution v_k 1}
\frac{\partial v_k}{\partial\tau}=v_k^2\left(\frac{(\tau+1)k^2}{\left(l_1+(\tau+1)k\right)^2}\frac{\partial^2 v_k}{\partial\theta_1^2}+\frac{\tau+1}{l^2}\frac{\partial^2 v_k}{\partial\theta_2^2}\right)\quad \text{on}\quad \mathbb S^1\times \mathbb S^1\times [0,4].
\end{equation}

Denote
$$
M(t)=\sup_{\mathbb S^1\times \mathbb S^1\times\{t\}} u\quad \text{and}\quad
m(t)=\inf_{\mathbb S^1\times \mathbb S^1\times\{t\}} u.
$$
The parabolic maximum principle implies that $M(t)$ is monotone decreasing and $m(t)$ is monotone increasing as $t$ increases. As a consequence, both the limits
$$M=\lim_{t\to+\infty}M(t)\quad \text{and}\quad m=\lim_{t\to+\infty}m(t)$$
exist. Notice that $u(\cdot,t)$ converges to a constant as $t$ tends to infinity if and only if it holds $M=m$. We just need to rule out the possibility that there is an increasing sequence $t_j\to +\infty$ such that the oscillations $O_j=M(t_j)-m(t_j)$ are greater than a fixed positive constant $\epsilon_0$. For each $t_j$ we can find a positive integer $k_j$ such that $t_j$ belongs to $[2k_j,4k_j]$. Then $v_{k_j}$ is a sequence of solution to \eqref{Eq: solution v_k 1} with
\begin{equation}\label{Eq: oscillation 1}
\mathop{\rm osc}\limits_{\mathbb S^1\times \mathbb S^1\times[1,3]} v_{k_j}\geq \epsilon_0>0.
\end{equation}
From the definition of $v_k$ and estimate \eqref{Eq: u bounded 1} we see that $v_{k_j}$ are uniformly bounded from below and above. Standard theory for parabolic PDE equations (see H\"older estimate \cite[P.419, Theorem 1.1]{LSU1968} and interior Schauder estimate \cite[P. 92, Theorem 1]{Fri1964}) implies that $v_{k_j}$ converges smoothly to a limit function $v$ up to a subsequence satisfying
$$
\frac{\partial v}{\partial\tau}=v^2\left(\frac{1}{\tau+1}\frac{\partial^2 v}{\partial\theta_1^2}+\frac{\tau+1}{l^2}\frac{\partial^2 v}{\partial\theta_2^2}\right)\quad \text{on}\quad \mathbb S^1\times \mathbb S^1\times [1,3].
$$
Clearly \eqref{Eq: oscillation 1} holds for $v$ as well. On the other hand, $v$ must attain its minimum value $m$ in some interior point since
$$
\min_{\mathbb S^1\times \mathbb S^1\times \{2\}} v=\lim_{j\to+\infty} \min_{\mathbb S^1\times \mathbb S^1\times \{2\}} v_{k_j}=\lim_{j\to+\infty} m(3k_j)=m.
$$
The strong parabolic maximum principle yields that $v$ is a constant function, which contradicts to \eqref{Eq: oscillation 1}.
From equation \eqref{Eq: quasispherical 1}, we have
$$
\frac{\mathrm d}{\mathrm dt}\int_{\mathbb S^1(l_1+t)\times \mathbb S^1(l)}u^{-1}(\cdot,t)\,\mathrm d\sigma=\frac{1}{l_1+t}\int_{\mathbb S^1(l_1+t)\times \mathbb S^1(l)}u^{-1}(\cdot,t)\,\mathrm d\sigma
$$
and so
$$
\int_{\mathbb S^1(l_1+t)\times \mathbb S^1(l)}u^{-1}(\cdot,t)\,\mathrm d\sigma=\frac{l_1+t}{l_1}\int_{\mathbb S^1(l_1)\times \mathbb S^1(l)}u_0^{-1}\,\mathrm d\sigma.
$$
As a result, we obtain
$$
\lim_{t\to+\infty}u(\cdot,t)=\frac{4\pi^2 l}{l_1}\left(\int_{\mathbb S^1(l_1)\times \mathbb S^1(l)}u_0^{-1}\,\mathrm d\sigma\right)^{-1}.
$$

Suppose that there is an admissible fill-in $(\Omega,g)$ of $\Sigma$ with
\begin{equation}\label{Eq: large total mean curvature 1}
\int_{\partial\Omega} H_{\partial\Omega}\,\mathrm d\sigma > 4\pi^2 l .
\end{equation}
By identifying $\partial\Omega$ and $\Sigma$, we can view $H_{\partial\Omega}$ as a smooth positive function on $\Sigma$. In this case, we set $u_0=l_1^{-1}H_{\partial\Omega}^{-1}$. From quasi-spherical metric equation \eqref{Eq: quasispherical 1} we can construct a metric $\tilde g$ on $\mathbb S^1\times \mathbb S^1\times[0,+\infty)$ with vanishing scalar curvature, where the boundary is isometric to $\Sigma$ and has mean curvature $-H_{\partial\Omega}$ with respect to the unit outer normal. The condition \eqref{Eq: large total mean curvature 1} ensures that the limit of $u$ is less than one and so the hypersurface $\mathbb S^1\times \mathbb S^1\times\{t\}$ isometric to $\mathbb S^1(l_1+t)\times \mathbb S^1(l)$ has mean curvature greater than $(l_1+t)^{-1}$ for sufficiently large $t$. Fix one such $t$ and choose $r>l_1+t$. Now we have three Riemannian manifolds: $$(\Omega,g),\quad (\mathbb S^1\times \mathbb S^1\times[0,t],\tilde g),\quad\text{and}\quad\left(\mathbb E^2/(r\mathbf Z)^2-\mathbb D^2(l_1+t)\right)\times \mathbb S^1(l).$$
We glue them together along two pairs of boundaries. It is easy to see that the underlying manifold after gluing satisfies the hypothesis of Theorem \ref{Thm: homotopy nontrivial no PSC}. On the other hand, since each of these manifolds has non-negative scalar curvature and the sum of mean curvatures on two sides of the glued boundaries is non-negative everywhere and positive somewhere, we can construct a PSC metric through a mollification procedure from \cite{Miao2002}, which leads to a contradiction.

The argument for the last statement is similar to that of \cite[Theorem 4.2]{SWY2019}. Once the scalar curvature is positive somewhere in $(\Omega,g)$, then we can decrease the scalar curvature a little bit and increase the mean curvature of the boundary without affecting the induced metric after an appropriate conformal deformation. Then we obtain a new admissible fill-in such that \eqref{Eq: large total mean curvature 1} holds but this is impossible. As a result, $(\Omega,g)$ must have vanishing scalar curvature. This also yields the static property of $(\Omega,g)$ due to \cite[Theorem 1]{Corvino2000}.
\end{proof}

\begin{proof}[Proof for Theorem \ref{Thm: re fill-in 1} when $n\geq 3$]
The proof in this case is similar to the previous one but the analysis turns out to be a little bit complicated since we cannot always reduce the problem to the standard sphere case and also we need a more careful analysis on $u$.

Let us work with an arbitrary convex hypersurface $\Sigma_0$ in $\mathbb E^n$ and denote
$$
\Phi:\Sigma_0\times[0,+\infty)\to \mathbb E^n-\Omega_0
$$
to be the outward geodesic flow with initial data $\Sigma_0$, where $\Omega_0$ is the region enclosed by $\Sigma_0$. It is clear that the Euclidean metric can be written as $g_{euc}=\mathrm dt^2+\gamma_t$.
Correspondingly, the map
$$\bar\Phi :\Sigma_0\times \mathbb S^1\times[0,+\infty)\to (\mathbb E^n-\Omega_0)\times \mathbb S^1(l)$$
is a diffeomorphism and the pullback metric can be written as $\bar g=\mathrm dt^2+\bar\gamma_t$, where $\bar\gamma_t=\gamma_t+l^2\mathrm d\theta^2$.
As before, we consider the quasi-spherical equation
\begin{equation}\label{quasispherical 2}
\bar H_t\frac{\partial u}{\partial t}=u^2\Delta_{\bar\gamma_t}u+\frac{1}{2}R_{\bar\gamma_t}(u-u^3),\quad u(\cdot,0)=u_0,
\end{equation}
where $\bar H_t$ and $R_{\bar\gamma_t}$ are mean curvature and scalar curvature of $\bar\Sigma_t$, and $u_0$ is a positive function on $\bar\Sigma_0$ to be determined later. The associated metric $\tilde g=u^2\mathrm dt^2+\bar\gamma_t$ has vanishing scalar curvature. Notice that $\Sigma_t$ is convex for all $t$, so we see $R_{\bar\gamma_t}=R_{\gamma_t}\geq 0$. The parabolic maximum principle then yields the a priori estimate
$$
\min\left\{1,\inf_{\bar\Sigma_0}u_0\right\}\leq u \leq \max\left\{1,\max_{\bar\Sigma_0}u_0\right\}
$$
and the solution to \eqref{quasispherical 2} exists all the time. Since the mean curvature and scalar curvature of $\bar\Sigma_t$ are completely determined by those of $\Sigma_t$, the same argument as in \cite[Lemma 2.2]{ST2002} implies
\begin{equation}\label{Eq: u-1 bound}
|u-1|\leq Ct^{2-n},\quad \forall\,t\geq 1.
\end{equation}
Here and in the sequel, $C$ is denoted to be some universal constant $C$ independent of $t$. Let
$$
v=t^{n-2}(u-1).
$$
Then $v$ is a uniformly bounded function satisfying the following equation
\begin{equation}\label{Eq: equation v}
\bar H_t\frac{\partial v}{\partial t}=u^2\left(\Delta_{\gamma_t}v+\frac{1}{l^2}\frac{\partial^2 v}{\partial\theta^2}\right)+v\left((n-2)\frac{\bar H_t}{t}-\frac{1}{2}R_{\bar\gamma_t}u(u+1)\right),\quad u=1+\frac{v}{t^{n-2}}.
\end{equation}
Define the map
$$
\bar\Psi_k: \Sigma_0\times \mathbb S^1\times [0,4]\to \Sigma_0\times \mathbb S^1\times [k,5k],\quad (x,\theta,\tau)\mapsto (x,\theta,(\tau+1)k)
$$
and denote $v_k=v\circ \bar\Psi_k$. Then $v_k$ satisfies
$$
\frac{\partial v_k}{\partial\tau}=f_1(k,\tau,v_k)\Delta_{\hat\gamma_{k,\tau}}v_k+f_2(k,\tau,v_k)\frac{\partial ^2v_k}{\partial\theta^2}+g(k,\tau,v_k),
$$
where
$$\hat\gamma_{k,\tau}=\frac{1}{[(\tau+1)k]^2}\gamma_{(\tau+1)k},$$
$$
f_1(k,\tau,v_k)=\frac{1}{(\tau+1)^2k\bar H_{(\tau+1)k}}\left(1+\frac{v_k}{[(\tau+1)k]^{n-2}}\right)^2,
$$
$$
f_2(k,\tau,v_k)=\frac{1}{l^2k\bar H_{(\tau+1)k}}\left(1+\frac{v_k}{[(\tau+1)k]^{n-2}}\right)^2,
$$
and
$$
g(k,\tau,v_k)=v_k\left(\frac{n-2}{\tau+1}-\frac{R_{\hat\gamma_{k,\tau}}}{2(\tau+1)^2k\bar H_{(\tau+1)k}}\left(1+\frac{v_k}{[(\tau+1)k]^{n-2}}\right)\left(2+\frac{v_k}{[(\tau+1)k]^{n-2}}\right)\right).
$$
From \cite[Lemma 2.1 and Lemma 2.4]{ST2002} we have
$$
k\bar H_{(\tau+1)k}\to \frac{n-1}{\tau+1},\quad R_{\hat\gamma_{k,\tau}}\to (n-1)(n-2),\quad \hat\gamma_{k,\tau}\to \gamma_{std},\quad\text{as}\quad k\to\infty,
$$
where $\gamma_{std}$ is the pullback of the standard spherical metric on $\Sigma_0$ by the Gauss map and all convergences are smooth with respect to $\tau$ and any coordinate system on $\Sigma_0$.

Denote
$$
M(t)=\sup_{\Sigma_t\times \mathbb S^1} v\quad \text{and}\quad
m(t)=\inf_{\Sigma_t\times \mathbb S^1} v.
$$
Combining \cite[Lemma 2.1]{ST2002} and \eqref{Eq: u-1 bound} we also see $|\beta(t)|\leq Ct^{-2}$ for all $t\geq 1$, where
$$
\beta(t)=\frac{n-2}{t}-\frac{R_{\bar\gamma_t}}{2\bar H_t}u(u+1).
$$
After applying parabolic maximum principle to \eqref{Eq: equation v}, we see
$$
M(t')\leq M(t)+Ct^{-1},\quad m(t')\geq m(t)-Ct^{-1},\quad \forall\,t'\geq t\geq 1.
$$
Therefore $v$ converges to a constant if and only if the oscillation of $v$ on $\bar\Sigma_t$ tends to $0$ as $t\to+\infty$. Also, $m(t)$ must have a limit as $t\to +\infty$.
Then we can repeating the argument in previous proof to conclude that $v(\cdot,t)$ converges to a constant as $t\to +\infty$.

Now we are ready to prove the theorem. Suppose that there is an admissible fill-in $(\Omega,g)$ of $\Sigma$ with
\begin{equation}\label{Eq: large total mean curvature 2}
\int_{\partial\Omega} H_{\partial\Omega}\,\mathrm d\sigma > 2\pi l T_0 .
\end{equation}
As before, we identify $\partial\Omega$ with $\Sigma$ and view $H_{\partial\Omega}$ as a smooth function on $\Sigma$. Take $$u_0=\frac{\bar H_0}{H_{\partial\Omega}}.$$
From a similar calculation as in \cite[Lemma 4.2]{ST2002}, it follows that the integral
$$
\int_{\bar\Sigma_t}\bar H_t\left( u^{-1}(\cdot,t)-1\right)\mathrm d\sigma_t
$$
is monotone increasing as $t$ increases. Combined with \eqref{Eq: large total mean curvature 2}, we see that the limit of $v$ is negative and so $u$ is less than one pointwise for $t$ large enough. A gluing argument then leads to desired contradiction. For the last statement, the proof is the same as before and we don't repeat it here.
\end{proof}

With the exactly same argument, we can also show the following
\begin{theorem}\label{Thm: re fill-in 2}
Let $\Sigma_0$ be a convex hypersurface or curve in the Euclidean space $\mathbb E^n$ with total mean curvature $T_0$. If $(\Omega,g)$ is an admissible fill-in of the product manifold $\Sigma_0\times \mathbb T^2$ with a flat $\mathbb T^2$ such that the torus component is incompressible in $\Omega$, then for $n\leq 5$ it holds
\begin{equation*}
\int_{\partial\Omega}H_{\partial\Omega}\,\mathrm d\sigma_g\leq T_0 \area(\mathbb T^2),
\end{equation*}
where $H_{\partial\Omega}$ is the mean curvature of $\partial\Omega$ with respect to the unit outer normal and $\mathrm d\sigma_g$ is the area element of $\partial\Omega$ with the induced metric. If $(\Omega,g)$ is an admissible fill-in of $\Sigma_0\times \mathbb T^2$ with the equality above, then $(\Omega,g)$ is static with vanishing scalar curvature.
\end{theorem}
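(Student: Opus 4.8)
The plan is to run the proof of Theorem \ref{Thm: re fill-in 1} essentially verbatim, with the flat circle factor $\mathbb S^1(l)$ replaced by the flat $\mathbb T^2$ and, at the last step, with Theorem \ref{2torus} invoked in place of Theorem \ref{Thm: homotopy nontrivial no PSC}; so I will only indicate the changes. Let $\Omega_0\subset\mathbb E^n$ be the convex region bounded by $\Sigma_0$ and let $\Phi:\Sigma_0\times[0,+\infty)\to\mathbb E^n-\Omega_0$ be the outward normal exponential flow, so that $g_{euc}=\mathrm dt^2+\gamma_t$ and $R_{\gamma_t}\geq 0$ by convexity of $\Sigma_t$. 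Taking the product with the flat torus gives a diffeomorphism $\bar\Phi:\Sigma_0\times\mathbb T^2\times[0,+\infty)\to(\mathbb E^n-\Omega_0)\times\mathbb T^2$ with $\bar g=\mathrm dt^2+\bar\gamma_t$, $\bar\gamma_t=\gamma_t+\gamma_{\mathbb T^2}$, and $R_{\bar\gamma_t}=R_{\gamma_t}\geq 0$ because the torus factor is flat. I would then solve the same quasi-spherical equation $\bar H_t\,\partial_t u=u^2\Delta_{\bar\gamma_t}u+\tfrac12 R_{\bar\gamma_t}(u-u^3)$ with $u(\cdot,0)=u_0=\bar H_0/H_{\partial\Omega}$; the parabolic maximum principle gives two-sided bounds and global existence, and \cite[Lemma 2.2]{ST2002} gives $|u-1|\leq Ct^{2-n}$ for $t\geq 1$. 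Setting $v=t^{n-2}(u-1)$ and rescaling by the maps $\bar\Psi_k$ as before (using, when $\Sigma_0$ is a curve, the degree-$k$ self-covers of the circle factors of $\mathbb T^2$ to keep the rescaled equation uniformly parabolic, exactly as in the $n=2$ case above), the rescaled solutions subconverge, by the estimates of \cite{ST2002}, to a solution of a limiting parabolic equation on $S^{n-1}_{std}\times\mathbb T^2\times[1,3]$; the strong parabolic maximum principle together with the spatial infimum being attained in the limit then forces $v(\cdot,t)$ to converge to a constant as $t\to+\infty$. This part is word-for-word identical to the proof of Theorem \ref{Thm: re fill-in 1}.

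Next, suppose for contradiction that $(\Omega,g)$ is an admissible fill-in of $\Sigma_0\times\mathbb T^2$ with the torus incompressible and $\int_{\partial\Omega}H_{\partial\Omega}\,\mathrm d\sigma>T_0\,\area(\mathbb T^2)$. As in \cite[Lemma 4.2]{ST2002}, the quantity $\int_{\bar\Sigma_t}\bar H_t\bigl(u^{-1}(\cdot,t)-1\bigr)\,\mathrm d\sigma_t$ is monotone nondecreasing in $t$, and at $t=0$ it equals $\int_{\partial\Omega}H_{\partial\Omega}\,\mathrm d\sigma-T_0\,\area(\mathbb T^2)>0$; combined with the convergence of $v$ this forces the limiting constant to be negative, so $u<1$ pointwise on $\bar\Sigma_t$ for all large $t$. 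Fixing such a $t$ and a large $r$, let $U=\Omega'_t\times\mathbb T^2$ where $\Omega'_t\subset\mathbb E^n/(r\mathbf Z)^n$ is the convex region bounded by $\Sigma_t$, and glue the three Riemannian manifolds $(\Omega,g)$, $\bigl(\Sigma_0\times\mathbb T^2\times[0,t],\ \tilde g=u^2\mathrm dt^2+\bar\gamma_t\bigr)$, and $\bigl(\mathbb E^n/(r\mathbf Z)^n-\Omega'_t\bigr)\times\mathbb T^2$ along their two common pairs of boundaries. The inner gluing has matching mean curvatures while the outer gluing has a strict mean curvature inequality in the favorable direction precisely because $u<1$; since each piece has $R\geq 0$, Miao's mollification \cite{Miao2002} produces a smooth metric of positive scalar curvature on the resulting closed $(n+2)$-manifold.

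It remains to recognize this closed manifold in the form of Theorem \ref{2torus}. Viewing $\mathbb T^{n+2}=(\mathbb E^n/(r\mathbf Z)^n)\times\mathbb T^2$, the region $U$ is a tube neighborhood of $\mathbb S^1_{n+1}\times\mathbb S^1_{n+2}$, the outermost flat piece is exactly $\mathbb T^{n+2}-U$, and the union of $(\Omega,g)$ with the quasi-spherical collar is $M_2-U$ for a closed $(n+2)$-manifold $M_2$ obtained by capping that union with $U$; thus the glued space is $(\mathbb T^{n+2},i)\sharp(M_2,\phi_2)$. Taking $\Sigma$ to be a $\mathbb T^2$-slice of $\partial U=\Sigma_t\times\mathbb T^2$, which is homotopic to $\mathbb S^1_{n+1}\times\mathbb S^1_{n+2}$ in $\mathbb T^{n+2}-U$, the set $M_2-\phi_2(U)$ deformation retracts onto the fill-in $(\Omega,g)$ and $\phi_2(\Sigma)$ is the torus boundary component of $\Omega$, so the incompressibility assumption is exactly the injectivity of $i_*:\pi_1(\phi_2(\Sigma))\to\pi_1(M_2-\phi_2(U))$ required by Theorem \ref{2torus}. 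Since $n+2\leq 7$, Theorem \ref{2torus} rules out any PSC metric on $(\mathbb T^{n+2},i)\sharp(M_2,\phi_2)$, contradicting the previous paragraph; hence $\int_{\partial\Omega}H_{\partial\Omega}\,\mathrm d\sigma\leq T_0\,\area(\mathbb T^2)$. For the equality case, if $R_g>0$ somewhere one performs a conformal deformation decreasing the scalar curvature and increasing $H_{\partial\Omega}$ without changing the boundary metric (as in \cite[Theorem 4.2]{SWY2019}), obtaining a fill-in that violates the inequality just proved; so $R_g\equiv 0$, and \cite[Theorem 1]{Corvino2000} yields that $(\Omega,g)$ is static.

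I do not expect any genuinely new obstacle here: the $\mathbb T^2$ argument is a transcription of the $\mathbb S^1$ one, the only adjustment being that the glued closed manifold is now $(n+2)$-dimensional, which is why the dimension bound drops from $n\leq 6$ to $n\leq 5$. The one delicate point — already present in Theorem \ref{Thm: re fill-in 1} and flagged in the introduction — is the long-time behaviour of the quasi-spherical solution: the equation is degenerate parabolic, so standard interior estimates do not apply directly, and it is the lifting device along the torus factor (passing to degree-$k$ self-covers before rescaling) that restores uniform parabolicity and allows one to pass to the limit equation on $S^{n-1}_{std}\times\mathbb T^2$. Everything else is routine.
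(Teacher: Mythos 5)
Your proposal is correct and follows exactly the route the paper intends; the paper itself reduces its ``proof'' of this theorem to one sentence (``The proof is almost the same except that we use the lifting property of $\mathbb T^2$ this time''), and your write-up supplies the omitted details in the same spirit: replace the $\mathbb S^1(l)$ factor with a flat $\mathbb T^2$, run the quasi-spherical argument with degree-$k$ self-covers of $\mathbb T^2$ to de-degenerate the rescaled equation, glue to a flat $(\mathbb E^n/(r\mathbf Z)^n)\times\mathbb T^2$, and invoke Theorem~\ref{2torus} in dimension $n+2\leq 7$.

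One small point of exposition: you phrase the lifting device as something needed ``when $\Sigma_0$ is a curve.'' In fact, the coefficient of $\Delta_{\mathbb T^2}$ after the time-rescaling $t=(\tau+1)k$ blows up like $k^2/\bar H_{(\tau+1)k}\sim k(\tau+1)$ for all $n\geq 2$, so passing to the degree-$k$ self-cover of the torus factor is required in every dimension; the paper's formula for $\bar\Psi_k$ in the $n\geq 3$ case apparently omits the torus scaling by a typo, but its expression for $f_2$ (which has $k\bar H_{(\tau+1)k}$ in the denominator rather than $\bar H_{(\tau+1)k}/k$) shows the scaling was used. Since your closing paragraph does describe the lifting as what ``restores uniform parabolicity and allows one to pass to the limit equation on $S^{n-1}_{std}\times\mathbb T^2$,'' this is a wording issue rather than a gap, but the parenthetical should be dropped so it does not read as a restriction to $n=2$.
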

\begin{proof}
The proof is almost the same except that we use the lifting property of $\mathbb T^2$ this time. We omit further details.
\end{proof}

{
At the end of this section, we present a proof for the following corollary
\begin{corollary}
Let $(\Sigma,\gamma)$ be a flat $2$-torus. Then $\Lambda_+(\Sigma,\gamma)<+\infty$.
\end{corollary}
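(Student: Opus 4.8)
The plan is to deduce finiteness from the rectangular case, which is already settled: by Corollary~\ref{Cor: fill-in 2-torus} one has $\Lambda_+(\mathbb S^1(l_1)\times\mathbb S^1(l_2))=4\pi^2\max\{l_1,l_2\}<+\infty$. The bridge to a general flat torus will be the connectedness — in fact the convexity — of the space of flat metrics on the fixed torus $\mathbb R^2/\mathbf Z^2$, namely the cone of constant positive definite symmetric $2\times2$ matrices. So fix a flat $\gamma$ and an arbitrary admissible fill-in $(\Omega,g)$ of $(\mathbb T^2,\gamma)$. Since $\Omega$ is a compact oriented $3$-manifold with $\partial\Omega=(\mathbb T^2,\gamma)$, Poincar\'e--Lefschetz duality (``half lives, half dies'') forces the kernel of $H_1(\partial\Omega;\mathbf Q)\to H_1(\Omega;\mathbf Q)$ to be one dimensional; hence there is a primitive homology class represented by a simple closed geodesic $\ell\subset(\mathbb T^2,\gamma)$ whose image in $\pi_1(\Omega)$ has infinite order, and whose $\gamma$-length is bounded by the length of the second shortest primitive closed geodesic of $(\mathbb T^2,\gamma)$ — a quantity depending only on $\gamma$.

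The first real step is to use connectedness to pick a smooth family $\gamma_t$, $t\ge0$, of flat metrics on $\mathbb T^2$ with $\gamma_0=\gamma$, with strictly increasing area (so that the leaves $\mathbb T^2\times\{t\}$ of the background $\bar g=\mathrm dt^2+\gamma_t$ have positive mean curvature $\bar H_t=\tfrac12\operatorname{tr}_{\gamma_t}\partial_t\gamma_t$), with $\ell$ a $\gamma_t$-geodesic for every $t$, and such that $(\mathbb T^2\times[t_0,\infty),\bar g)$ is isometric for large $t_0$ to a tail of $(\mathbb E^2-\mathbb D^2)\times\mathbb S^1(\operatorname{len}_\gamma\ell)$ foliated by the concentric circles, with the $\mathbb S^1$-factor corresponding to $\ell$. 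Such a family exists because the conformal type of $\gamma_t$ may be made to trace any path in the (connected) Teichm\"uller space of the torus — here from the class of $\gamma$ toward the cusp — while the overall scale is driven up. Exactly as in the proof of Theorem~\ref{Thm: re fill-in 1}, I would then solve the quasi-spherical parabolic equation $\bar H_t\,\partial_t u=u^2\Delta_{\gamma_t}u+\tfrac12(R_{\gamma_t}-R_{\bar g})u-\tfrac12R_{\gamma_t}u^3$, which for our flat leaves is just $\bar H_t\,\partial_t u=u^2\Delta_{\gamma_t}u-\tfrac12 R_{\bar g}u$, with initial datum $u_0=\bar H_0/H_{\partial\Omega}$. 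This produces a metric $\tilde g=u^2\mathrm dt^2+\gamma_t$ of vanishing scalar curvature on the transition region $\mathbb T^2\times[0,\infty)$ whose inner boundary glues onto $(\Omega,g)$ with matching induced metric and mean curvature, so that $(\Omega,g)\cup(\mathbb T^2\times[0,\infty),\tilde g)$ has $R\ge0$; the maximum principle and the oscillation argument of Section~\ref{Sec: fill-in} give long-time existence, uniform bounds, and $u(\cdot,t)\to u_\infty=\mathrm{const}$.

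The conclusion then comes from the dichotomy governed by the monotonicity (as in \cite{ST2002}) of $m(t)=\int_{\mathbb T^2\times\{t\}}\bar H_t(1-u^{-1})\,\mathrm d\sigma_t$: at $t=0$ its value is $\int_{(\mathbb T^2,\gamma)}\bar H_0\,\mathrm d\sigma-\int_{\partial\Omega}H_{\partial\Omega}\,\mathrm d\sigma$, while along the Euclidean tail $\int_{\mathbb T^2\times\{t\}}\bar H_t\,\mathrm d\sigma_t$ is the constant $4\pi^2\operatorname{len}_\gamma\ell$, so $m(\infty)$ is, up to this constant, $1-u_\infty^{-1}$. If $u_\infty\ge1$ monotonicity yields at once $\int_{\partial\Omega}H_{\partial\Omega}\,\mathrm d\sigma\le\int_{(\mathbb T^2,\gamma)}\bar H_0\,\mathrm d\sigma$, a bound independent of $\Omega$. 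If $u_\infty<1$, then for large $t$ the quasi-spherical leaf has mean curvature $\bar H_t/u>\bar H_t$, so cutting there and gluing $(\Omega,g)\cup(\text{transition})$ to $(\mathbb E^2/(R\mathbf Z)^2-\mathbb D^2)\times\mathbb S^1(\operatorname{len}_\gamma\ell)$ gives, after Miao's mollification \cite{Miao2002}, a closed $3$-manifold carrying a metric with $R\ge0$ and $R>0$ somewhere; this manifold is a general surgery $(\mathbb T^3,i)\sharp(M_2,\phi_2)$ along a tube neighbourhood of $\ell$ with $\ell$ homotopically nontrivial on the $M_2$-side, contradicting Theorem~\ref{Thm: homotopy nontrivial no PSC}. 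Hence $u_\infty\ge1$ for every admissible fill-in, and $\Lambda_+(\mathbb T^2,\gamma)\le\int_{(\mathbb T^2,\gamma)}\bar H_0\,\mathrm d\sigma<+\infty$.

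The hard part, I expect, will be the quasi-spherical analysis when the leaves $\gamma_t$ change conformal type rather than merely rescale: the equation degenerates where $\bar H_t$ is small, and one must re-establish the a priori estimates, long-time existence, convergence to a constant, and — most delicately — the monotonicity of $m(t)$ in this generality, together with a choice of morphing path keeping the background scalar curvature $R_{\bar g}$ controlled (nonnegative for a sufficiently slow morphing) and correctly matching the class of $\ell$ with the Euclidean tail. All of this parallels, but is technically heavier than, the argument already carried out in Section~\ref{Sec: fill-in}.
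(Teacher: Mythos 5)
Your route is genuinely different from the paper's, and the place where it comes apart is precisely the place you flag as ``the hard part'' --- but the difficulty there is more structural than you suggest, and it is sidestepped in the paper by an idea you have not used. You try to morph $\gamma$ directly into the ALF product tail $(\mathbb E^2-\mathbb D^2)\times\mathbb S^1(\operatorname{len}\ell)$ and then rerun the full positive-mass contradiction (hence the ``half lives, half dies'' step to produce a $\pi_1$-nontrivial circle). The paper does something more modest: it chooses a smooth family $\gamma_t$ of flat metrics joining $\gamma_0=\gamma$ to the \emph{square} flat torus $\gamma_1$, takes the \emph{cone-like} background $\bar g=\mathrm ds^2+s^2\gamma_{(s-1)/k}$ on the compact slab $[1,k+1]\times\mathbb T^2$ with $k$ large, and reduces to Corollary \ref{Cor: fill-in 2-torus} for $\mathbb S^1(k+1)\times\mathbb S^1(k+1)$, which is already known and requires no hypothesis on the fill-in's topology (its $3$-dimensional Dehn-lemma argument handles both cases automatically, so the homology argument you inserted is unnecessary).

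The conical rescaling $s^2\gamma_{(s-1)/k}$, with $k$ chosen large relative to $\|\dot\gamma_t\|$, is not a cosmetic choice: it forces the leaves to be uniformly \emph{near-umbilical}, $\|\bar A_s-\tfrac1s\bar\gamma\|\leq\tfrac1{2s}$, and this is exactly what drives the quasi-spherical monotonicity $\tfrac{\mathrm d}{\mathrm ds}\int\bar H_s u^{-1}\,\mathrm d\sigma_s\geq\tfrac1{4s}\int\bar H_s u^{-1}\,\mathrm d\sigma_s$, because after Gauss the relevant term is $\tfrac12(\bar H_s^2-\|\bar A_s\|^2)$, i.e.\ twice the product of principal curvatures. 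In your background $\bar g=\mathrm dt^2+\gamma_t$ you impose only that $\bar H_t=\tfrac12\operatorname{tr}_{\gamma_t}\partial_t\gamma_t>0$; that is not sufficient, since once $\gamma_t$ changes conformal class the shape operator $\bar A_t=\tfrac12\partial_t\gamma_t$ can easily acquire a negative eigenvalue (one direction expanding, the other contracting), making $\bar H_t^2-\|\bar A_t\|^2<0$ and destroying the monotonicity you rely on. Your suggestion to keep ``$R_{\bar g}$ nonnegative by slow morphing'' is not the right fix either: $R_{\bar g}$ is generally negative in these foliated backgrounds (even the exact cone over a flat $2$-torus has $R_{\bar g}=-2/t^2$), and the quasi-spherical construction does not need it --- what it needs is semidefiniteness, equivalently near-umbilicity, of $\bar A_t$, and that is what the $s^2$-rescaling buys for free. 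Your morphing also forces the $\ell$-direction to have \emph{constant} length (to match the Euclidean tail), so even at its endpoint $\bar A_t$ has a kernel; combined with the conformal change this makes the required monotonicity delicate at best and false for a generic path. So the gap is real: as written, the key a priori estimate and convergence of $u$ through the morphing region are not established, and the paper's mechanism for establishing them (cone rescaling $+$ reduction to the rectangular case) is precisely the idea your proposal is missing.
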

\begin{proof}
First observe that flat metrics on $\mathbb T^2$ form a connected space. To see this, we start with an arbitrary flat metric $\gamma$ on $\mathbb T^2$. It is well-known that $(\mathbb T^2,\gamma)$ can be viewed as a quotient space $\mathbb R^2/\Gamma$, where $\Gamma$ is a lattice on $\mathbb R^2$. We can find an orientation-preserving affine transformation $\Phi:\mathbb R^2\to \mathbb R^2$ such that $\Phi(\mathbf Z^2)=\Gamma$. From \cite[Corollary 3.6]{matrixgroup2018} we can pick up a smooth family of orientation-preserving affine transformations
$\{\Phi_t\}_{0\leq t\leq 1}$ with $\Phi_0=\Phi$ and $\Phi_1=\id$. Then the pullback metric $\Phi_t^*(g_{euc})$ induces a smooth family of flat metrics $\{\gamma_t\}_{0\leq t\leq 1}$ on $\mathbb T^2$ with $\gamma_0=\gamma$ and $\gamma_1=\mathrm d\theta_1^2+\mathrm d\theta_2^2$. Notice that there is a positive constant $k$ such that the metric $\bar g=\mathrm ds^2+s^2\gamma_{\frac{s-1}{k}}$ on $[1,k+1]\times \mathbb T^2$ satisfying
\begin{equation}\label{Eq: second fundamental form}
\left\|\bar A_s-\frac{1}{s}\bar\gamma_{\frac{s-1}{k}}\right\|_{\bar\gamma_{s/k}}\leq \frac{1}{2s},\quad \bar\gamma_{\frac{s-1}{k}}:=s^2\gamma_{\frac{s-1}{k}},
\end{equation}
where $\bar A_s$ is the second fundamental form of $\{s\}\times\mathbb T^2$ with respect to $\partial_s$. The quasi-spherical equation with respect to $\bar g$ now reads
$$
\bar H_s\frac{\partial u}{\partial s}=u^2\Delta_{\bar\gamma_{\frac{s-1}{k}}}u-\frac{1}{2}R_{\bar g}u.
$$
A direct computation combined with \eqref{Eq: second fundamental form} yields
\begin{equation*}
\begin{split}
\frac{\mathrm d}{\mathrm ds}\int_{\{s\}\times \mathbb T^2}\bar H_s u^{-1}\,\mathrm d\sigma_s &=\frac{1}{2}\int_{\{s\}\times \mathbb T^2}\left(\bar H_s^2-\|\bar A_s\|^2\right)u^{-1}\,\mathrm d\sigma_s\geq \frac{1}{4s}\int_{\{s\}\times \mathbb T^2}\bar H_s u^{-1}\,\mathrm d\sigma_s.
\end{split}
\end{equation*}
Using the quasi-spherical metric $\tilde g=u^2\mathrm ds^2+s^2\gamma_{\frac{s-1}{k}}$ and gluing method (with a handle of corner as in \cite{Miao2002}), we can always construct an admissible fill-in of $\mathbb S^1(k+1)\times \mathbb S^1(k+1)$ with total mean curvature no less than $T_0/C$ once $(\mathbb T^2,\gamma)$ admits an admissible fill-in with total mean curvature $T_0$, where $C$ is a universal constant depending only on $\gamma_t$ and $k$. This gives $\Lambda_+(\mathbb T^2,\gamma)\leq 4\pi^2(k+1)C<+\infty$.
\end{proof}
}

\newpage

\appendix
\section{Some topological results}
Let $\mathbb T^n$ be a $n$-torus and $\Omega=B\times \mathbb S^1_n$ be a subregion of $\mathbb T^n$ associated with the embedding $i:\Omega\to \mathbb T^n$, where $B$ denotes a ball in $\mathbb T^{n-1}$. Take a closed curve $\gamma$ in $\partial\Omega$ such that $\gamma$ is homotopic to $\mathbb S^1_n$ in $\mathbb T^n-\Omega$.

In the following, we are going to prove
\begin{proposition}\label{Prop: same homotopy local and global}
For any $(M_2,\phi_2)$ in $\mathcal C_\Omega$, $\gamma$ is homotopic to a point in $(\mathbb T^n,i)\sharp(M_2,\phi_2)$ if and only if it is homotopic to a point in $M_2-\phi_2(\Omega)$.
\end{proposition}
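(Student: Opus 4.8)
My plan is to compute $\pi_1$ of the gluing space by van Kampen's theorem and to reduce the statement to an injectivity property of an amalgamated free product.

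The ``if'' direction is immediate: viewing $\gamma$ as a loop on the gluing hypersurface $\partial\Omega\subset M:=(\mathbb T^n,i)\sharp(M_2,\phi_2)$, equivalently as $\phi_2(\gamma)$ on the boundary of the codimension-zero submanifold $M_2-\phi_2(\Omega)$, functoriality of $\pi_1$ under the inclusion $M_2-\phi_2(\Omega)\hookrightarrow M$ turns a null-homotopy in $M_2-\phi_2(\Omega)$ into one in $M$. For the ``only if'' direction, write $A=\mathbb T^n-i(\Omega)=(\mathbb T^{n-1}-B)\times\mathbb S^1_n$ and $V=M_2-\phi_2(\Omega)$, so that $M=A\cup_{\partial\Omega}V$ and van Kampen gives $\pi_1(M)=\pi_1(A)\ast_{\pi_1(\partial\Omega)}\pi_1(V)$. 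Since $\gamma$ lies on $\partial\Omega\subset V$, its class in $\pi_1(M)$ is the image of its class in $\pi_1(V)$, so it suffices to show that $\pi_1(V)\to\pi_1(M)$ is injective. First one notes that $\pi_1(\partial\Omega)\to\pi_1(A)$ is injective: the inclusion $\partial\Omega\hookrightarrow A$ is the product of $\mathrm{id}_{\mathbb S^1_n}$ with $\partial B\hookrightarrow\mathbb T^{n-1}-B$, and this last map is $\pi_1$-injective — trivially so for $n\geq4$, and for $n=3$ it sends a generator to the boundary commutator $[a,b]$, which has infinite order in $\pi_1(\mathbb T^2-B)$, a free group of rank two. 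The delicate point is that this does \emph{not} automatically imply $\pi_1(V)\hookrightarrow\pi_1(M)$, since for a general amalgam $G\ast_C H$ injectivity of $C\to G$ need not force injectivity of $H\to G\ast_C H$.

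To get past this I would exploit the special structure $\pi_1(A)=\pi_1(\mathbb T^{n-1}-B)\times\langle t\rangle$ with $t=[\mathbb S^1_n]$, where $\pi_1(\mathbb T^{n-1}-B)$ is free abelian when $n\geq4$ (so $\pi_1(\partial\Omega)$ is a direct factor and the conclusion is immediate from a retraction) and is $F_2$ when $n=3$. Quotienting the first factor by the third term of its lower central series — which does nothing for $n\geq4$ and produces the integral Heisenberg group for $n=3$ — makes the image $C:=\pi_1(\partial\Omega)$ become central while keeping $C$ injective. Setting $K:=\ker(\pi_1(\partial\Omega)\to\pi_1(V))$, a normal subgroup of $C$, this centrality yields the key identity $C\cap\langle\!\langle K\rangle\!\rangle_{\pi_1(A)}=K$, where $\langle\!\langle\cdot\rangle\!\rangle$ is the normal closure. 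That identity allows one to rewrite $\pi_1(M)=\bigl(\pi_1(A)/\langle\!\langle K\rangle\!\rangle\bigr)\ast_{C/K}\pi_1(V)$ as an amalgam in which the edge group $C/K$ embeds into \emph{both} vertex groups; the classical normal form theorem for amalgamated free products then gives $\pi_1(V)\hookrightarrow\pi_1(M)$, hence a null-homotopy of $\gamma$ in $M$ forces one in $M_2-\phi_2(\Omega)$.

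The step I expect to be the main obstacle is exactly this last one. One-sided injectivity genuinely fails for general amalgamated products (e.g. $\mathrm{BS}(2,3)\ast_{\mathbb Z}\mathbb Z/2$ collapses), so the proof must extract the almost-central, nilpotent nature of $\pi_1(\mathbb T^n-\Omega)$ and verify the normal-closure identity; everything else is formal. For $n=3$ there is also a purely three-dimensional alternative: if $\partial\Omega$ is incompressible in $M$ there is nothing to prove, and otherwise the Loop Theorem produces an embedded compressing disc which, since $\partial\Omega$ is incompressible in $A$, must lie on the $M_2$ side — but the group-theoretic route has the advantage of handling all dimensions uniformly.
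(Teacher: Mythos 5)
Your argument is correct, but it takes a genuinely different route from the paper's. The paper proves the ``only if'' direction by an explicit covering-space construction: starting from the universal covering $p_2:\hat M_2\to M_2-\phi_2(\Omega)$, it manufactures, for every boundary component $C_\alpha$ of $\hat M_2$, an intermediate covering of $\mathbb T^n-\Omega$ whose boundary realizes exactly the subgroup $p_{2*}\pi_1(C_\alpha)\le\pi_1(\partial\Omega)$ (two boundary components when $n=3$, one when $n\ge4$), and glues these pieces together --- using two copies of $\hat M_2$ in dimension three --- to extend $p_2$ to a covering $p:\hat M\to(\mathbb T^n,i)\sharp(M_2,\phi_2)$ in which the lift of $\gamma$ remains an open arc. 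You work entirely in group theory: after van Kampen, the problem becomes one-sided injectivity in the pushout $\pi_1(A)\ast_C\pi_1(V)$ with $A=\mathbb T^n-\Omega$, $V=M_2-\phi_2(\Omega)$, $C=\pi_1(\partial\Omega)$, and you resolve it by passing to the class-two nilpotent quotient of the factor $\pi_1(\mathbb T^{n-1}-B)$ in $\pi_1(A)=\pi_1(\mathbb T^{n-1}-B)\times\langle t\rangle$. This makes the image of $C$ central, so the normal closure of $K=\ker(C\to\pi_1(V))$ in the $A$-side vertex group is $K$ itself, and the normal form theorem for genuine amalgamated products then gives $\pi_1(V)\hookrightarrow\pi_1(M)$. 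Both proofs face the same obstacle --- one-sided embeddability is not formal for pushouts --- and both exploit the special structure of $\pi_1(\mathbb T^n-\Omega)$; the paper's method is more concrete and its $n=3$ versus $n\ge4$ case split is visible in the combinatorics of the coverings, while your nilpotent-quotient trick handles all dimensions uniformly and cleanly isolates the algebraic content. One point worth spelling out in a write-up: the quotient by $\gamma_3(F_2)\times\{1\}$ does not disturb the edge group, since $[a,b]$ still has infinite order in the Heisenberg quotient $F_2/\gamma_3(F_2)$ (it generates the center), so $C$ continues to embed into the quotiented $\pi_1(A)$.
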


Clearly, the difficulty lies in the ``only if'' part and the basic idea to prove this part is to find a covering space of $(\mathbb T^n,i)\sharp(M_2,\phi_2)$ such that the lift of curve fails to be a closed curve under the assumption that $\gamma$ is homotopiclly non-trivial in $M_2-\phi_2(\Omega)$. Since the universal covering of $M_2-\phi_2(\Omega)$ has above property in this case, we just need to extend this to a covering of $(\mathbb T^n,i)\sharp(M_2,\phi_2)$.

First let us establish several preliminary results.
\begin{lemma}
Let $M=\mathbb T^2-D$ with $D$ a disk in $\mathbb T^2$ and $x_0$ be a point in $\partial M$. Given any subgroup $G$ of $\pi_1(\partial M,x_0)$, we can find a covering $p:\hat M\to M$ such that $\partial \hat M$ consists of two components $\hat\partial_1$ and $\hat\partial_2$, where it holds $p_*\pi_{1}(\hat\partial_i,\hat x_i)=G$ for some point $\hat x_i$ in $\hat\partial_i$ for $i=1,2$.
\end{lemma}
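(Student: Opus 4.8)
The plan is to realize $\hat M$ as a composition of two coverings: first a connected double cover of $M$ that separates $\partial M$ into two disjoint circles, and then a connected cyclic cover chosen so that each of those two circles gets wrapped around itself exactly the prescribed number of times while no further boundary components appear.

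First I would record the relevant algebra. Since $M=\mathbb T^2-D$ is a compact orientable surface of genus one with a single boundary circle, $\pi_1(M)$ is free of rank two with a basis $a,b$, and the inclusion of $\partial M$ sends the generator of $\pi_1(\partial M,x_0)\cong\mathbb Z$ to (a conjugate of) the commutator $c=[a,b]$. Hence $G$ is either $\{1\}$ or the index-$m$ subgroup of $\pi_1(\partial M,x_0)$ for a unique integer $m\geq 1$; it is convenient to write $m=\infty$ in the first case and to read $\mathbb Z/\infty$ as $\mathbb Z$ below. \emph{Step 1.} Take $\phi\colon\pi_1(M)\to\mathbb Z/2$ with $\phi(a)=\phi(b)=1$, so that $\phi(c)=0$, and let $p_1\colon\hat M_1\to M$ be the connected double cover corresponding to $\ker\phi$. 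Because $c\in\ker\phi$ the curve $\partial M$ lifts, so (the total degree being $2$) the set $p_1^{-1}(\partial M)$ consists of exactly two circles $\delta_1,\delta_2$, each mapped homeomorphically onto $\partial M$. An Euler-characteristic count gives $\chi(\hat M_1)=2\chi(M)=-2$, so $\hat M_1$ is a compact orientable surface of genus one with two boundary circles; in particular $H_1(\hat M_1;\mathbb Z)\cong\mathbb Z^3$, the class $[\delta_1]$ is primitive, and $[\delta_1]+[\delta_2]=0$.

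\emph{Step 2 and verification.} Choose a homomorphism $\theta\colon\pi_1(\hat M_1)\to\mathbb Z/m$ sending $[\delta_1]$ to a generator (possible by primitivity); then the relation $[\delta_1]+[\delta_2]=0$ forces $\theta([\delta_2])$ to be a generator as well. Let $p_2\colon\hat M\to\hat M_1$ be the connected regular cover corresponding to $\ker\theta$ and set $p=p_1\circ p_2$. Since $\theta$ sends each $[\delta_i]$ to a generator of $\mathbb Z/m$, the preimage $p_2^{-1}(\delta_i)$ is connected and covers $\delta_i$ with degree $m$ (a copy of $\mathbb R$ over the circle when $m=\infty$). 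Therefore $\partial\hat M$ has exactly the two components $\hat\partial_i=p_2^{-1}(\delta_i)$, each of which $p$ maps onto $\partial M$ as a connected $m$-fold cover; consequently, for any base points $\hat x_i\in\hat\partial_i$, the image $p_*\pi_1(\hat\partial_i,\hat x_i)$ is the subgroup of $\pi_1(\partial M)\cong\mathbb Z$ of index $m$, i.e.\ exactly $G$ (the subgroup is base-point independent since $\pi_1(\partial M)$ is abelian).

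The construction uses nothing beyond covering-space theory, so the only point that genuinely constrains the argument is the requirement that $\partial\hat M$ have \emph{exactly two} components. A single cyclic cover of $M$ tailored to make $\partial M$ wind $m$ times typically splits $\partial M$ into many boundary circles; this is why one first passes to the double cover of Step 1 purely to separate $\partial M$ into two circles, and only afterwards applies a cyclic cover which, thanks to the primitivity of $[\delta_1]$ and the relation $[\delta_1]+[\delta_2]=0$, keeps each of them connected. Checking these two facts about $H_1(\hat M_1)$ (primitivity of a boundary class and the sum relation) is the main thing to verify carefully; everything else is routine bookkeeping with degrees of circle covers.
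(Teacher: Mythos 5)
Your proof is correct and takes a genuinely different route from the paper's. The paper's proof is a direct, hands-on construction: it realizes $M$ as a quotient of a $+$-shaped strip, writes down the boundary curve $\gamma$ explicitly, and then draws the desired cover (one picture for $G=k\mathbb Z$ with $k$ finite, a second for $G=\{1\}$). Your argument instead factors the covering as a composition: a $\mathbb Z/2$-cover that splits $\partial M$ into two circles $\delta_1,\delta_2$ (using that the boundary is the commutator $[a,b]$, hence lies in $\ker\phi$ for $\phi(a)=\phi(b)=1$), followed by a cyclic cover tuned so each $\delta_i$ stays connected and wraps the prescribed number of times. The key points you identify — that $\chi(\hat M_1)=-2$ forces $\hat M_1\cong\Sigma_{1,2}$, that $[\delta_1]$ is primitive in $H_1(\hat M_1;\mathbb Z)\cong\mathbb Z^3$, and that $[\delta_1]+[\delta_2]=0$ forces $\theta([\delta_2])$ to also generate $\mathbb Z/m$ — are all correct and are exactly what makes the cyclic cover have precisely two boundary components. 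Your algebraic approach is cleaner to verify and does not rely on figures, which is a real advantage here since the paper's argument is essentially a proof by picture; the paper's construction is arguably more elementary in that it avoids the bookkeeping about $H_1(\hat M_1)$, but your version would also be easier to adapt to Corollary \ref{Cor: existence for covering} and Lemma \ref{Lem: existence for covering D4+}.
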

\begin{proof}
For convenience, we consider $M$ as the quotient space of a `+' shaped stripe by identifying opposite edges $\alpha_1$ and $\beta_1$ as shown in the following figure. In this setting, the boundary is a curve $\gamma$ that starts from point $x_0$, then passes each arrows in the marked order and finally returns to point $x_0$.
\begin{figure}[htbp]
\centering
\includegraphics[width=5cm]{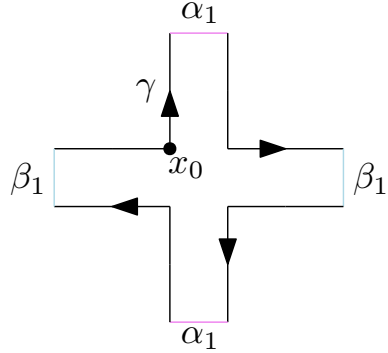}
\caption{The space $\mathbb T^2-D$}
\end{figure}

With the generator $[\gamma]$ of $\pi_1(\partial M,x_0)$ above, any non-trivial subgroup $G$ is generated by $k[\gamma]$ for some positive integer $k$. In this case, the desired cover is given by
\newpage
\begin{figure}[htbp]
\centering
\includegraphics[width=8cm]{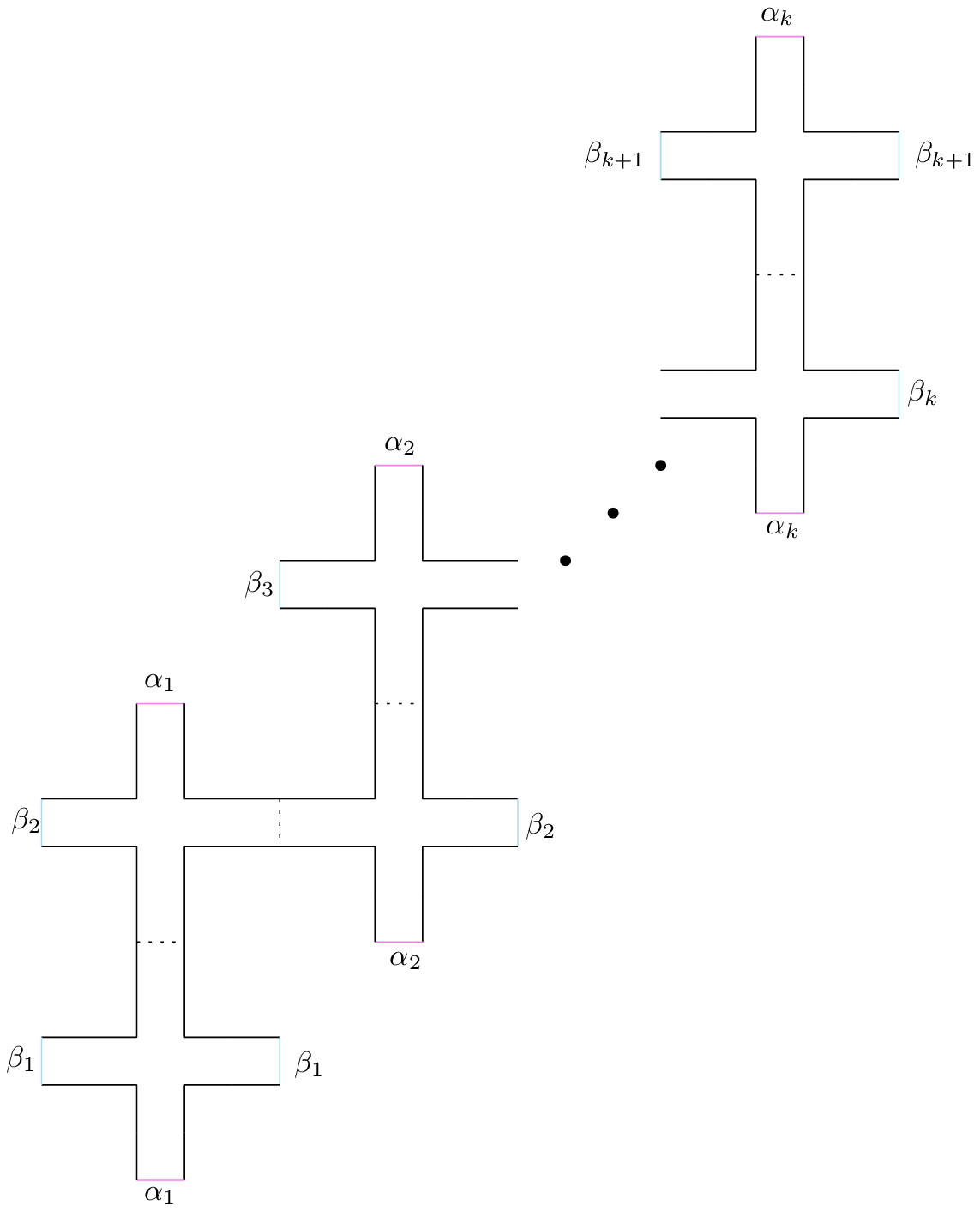}

\end{figure}

If $G$ is the trivial group, we take $k=\infty$ in above construction and the corresponding cover looks like
\begin{figure}[htbp]
\centering
\includegraphics[width=8cm]{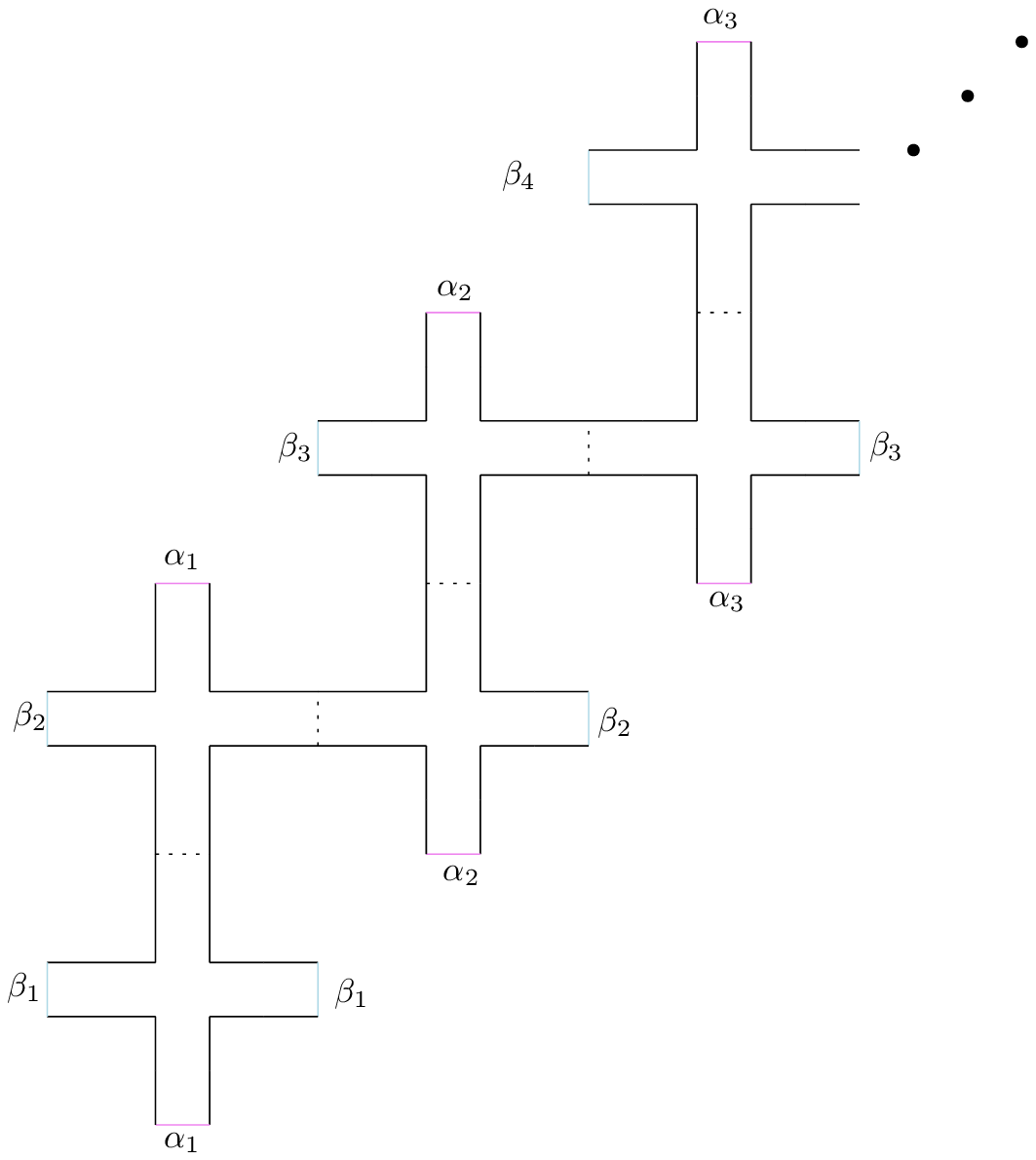}
\end{figure}

\end{proof}
\begin{corollary}\label{Cor: existence for covering}
Let $M=(\mathbb T^2-D)\times \mathbb S_3^1$ and $x_0$ be a point in $\partial M$. Given any subgroup $G$ of $\pi_1(\partial M,x_0)$, we can find a covering $p:\hat M\to M$ such that $\partial \hat M$ consists of two components $\hat\partial_1$ and $\hat\partial_2$, where it holds $p_*\pi_1(\hat\partial_i,\hat x_i)=G$ for some point $\hat x_i$ in $\hat\partial_i$ for $i=1,2$.
\end{corollary}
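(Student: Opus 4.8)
The plan is to bootstrap the corollary from the preceding lemma by exploiting the product decomposition $M=(\mathbb T^2-D)\times\mathbb S^1_3$. Since the punctured torus deformation retracts onto $S^1\vee S^1$ we have $\pi_1(\mathbb T^2-D)=F_2=\langle a,b\rangle$ and $\pi_1(\mathbb S^1_3)=\langle c\rangle\cong\mathbb Z$, so $\pi_1(M)=F_2\times\mathbb Z$. The boundary $\partial M=\partial(\mathbb T^2-D)\times\mathbb S^1_3$ is a $2$-torus, and the inclusion identifies $\pi_1(\partial M,x_0)$ with the subgroup $P=\langle[a,b]\rangle\times\langle c\rangle\cong\mathbb Z^2$ of $F_2\times\mathbb Z$; I write $[\gamma]=[a,b]$ for the class of the boundary circle of $\mathbb T^2-D$ and $[\mu]=c$ for the $\mathbb S^1_3$-factor, so a subgroup $G\le\pi_1(\partial M)$ is a sublattice of $\mathbb Z[\gamma]\oplus\mathbb Z[\mu]$. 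The guiding principle is that any covering of $\mathbb T^2-D$ produced by the lemma may be multiplied by a covering of $\mathbb S^1_3$ to give a covering of $M$, turning the two boundary circles of the former into two boundary tori of the latter.

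First I would treat the product subgroups. Suppose $G=\langle[\gamma]^k\rangle\times\langle[\mu]^n\rangle$ for integers $k,n\ge 0$, with the convention that $k=0$ (resp.\ $n=0$) kills that factor. Apply the preceding lemma to $\mathbb T^2-D$ with the subgroup $k\mathbb Z\le\pi_1(\partial(\mathbb T^2-D))$ to obtain a connected covering $q_1:\widehat{\mathbb T^2-D}\to\mathbb T^2-D$ whose boundary has exactly two components, each realizing $k\mathbb Z$. Let $q_2:\widehat{\mathbb S^1_3}\to\mathbb S^1_3$ be the connected covering with $\pi_1=n\mathbb Z$ (a circle if $n\ge1$, the line if $n=0$). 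Then $q_1\times q_2$ is a connected covering of $M$ with boundary $(\hat\partial_1\sqcup\hat\partial_2)\times\widehat{\mathbb S^1_3}$: exactly two components, and each of them covers $\partial M$ with associated subgroup $k\mathbb Z\oplus n\mathbb Z=G$. This settles the product case.

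Next I would reduce the general case toward the product case. Given an arbitrary $G$, let $\langle[\mu]^n\rangle$ be the image of $G$ under projection to the $[\mu]$-factor. If $n=0$ then $G\le\langle[\gamma]\rangle$ is a product subgroup and we are done by the previous step. If $n\ge 1$, pass first to the connected $n$-fold covering $M_n\to M$ which is $n$-fold in the $\mathbb S^1_3$-direction: it is unbranched, $M_n$ is again of the form $(\mathbb T^2-D)\times S^1$, and $\partial M_n\to\partial M$ is the corresponding $n$-fold covering of the $2$-torus. Since the $[\mu]$-component of $G$ lies in $\langle[\mu]^n\rangle$, the group $G$ sits inside $\pi_1(\partial M_n)$, there surjects onto the $[\mu]$-factor, and the connected $G$-covering of $\partial M_n$ maps down to the connected $G$-covering of $\partial M$. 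So it suffices to realize, in duplicate as boundary subgroup of a covering of $M_n$, a subgroup of the form $G=\langle[\gamma]^m,\,[\gamma]^s[\mu]\rangle$ with $m\ge0$ and $0\le s<m$ when $m\ge1$. When $s\equiv0\pmod m$ this is $\langle[\gamma]^m\rangle\times\langle[\mu]\rangle$ and the product case applies.

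The main obstacle is the genuinely sheared case $s\not\equiv 0\pmod m$. Here no covering obtained by multiplying a covering of $\mathbb T^2-D$ with a covering of $S^1$ can have boundary subgroup $G$, since the unramified fiber class $[\mu]$ would absorb the shear; one is forced to use a covering of $M_n$ that does not respect the product, for instance the one associated with a homomorphism from $\pi_1(M_n)=F_2\times\mathbb Z$ onto a non-abelian (Heisenberg-type) group in which the commutator $[a,b]=[\gamma]$ has order exactly $m$ while the central element $[\gamma]^s[\mu]$ dies, so that the intersection of the resulting subgroup with $P$ is precisely $G$. The delicate point — the step I expect to cost the most work — is to arrange simultaneously that the preimage of $\partial M_n$ has \emph{exactly} two components, each a connected covering of $\partial M_n$ with group $G$; equivalently, that the relevant double-coset space stays of size two even as the image of the meridian $[\gamma]$ acquires order $m$. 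One natural attempt is to build the covering as a flat $S^1$-bundle over a lemma-type covering $\widehat{\mathbb T^2-D}$ (whose two boundary loops $[\gamma]^{\pm m}$ are primitive in homology in the explicit strip model), with the shear encoded by the bundle holonomy and the boundary count inherited from the lemma; checking that such a model actually produces the sheared boundary subgroup, rather than only product subgroups, is exactly the issue that must be resolved.
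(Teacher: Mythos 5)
You have put your finger on the real issue, and your diagnosis is sharper than the paper's own one-line proof: a covering obtained as a product $q_1\times q_2$ of a covering of $\mathbb T^2-D$ (from the preceding lemma) and a covering of $\mathbb S^1_3$ realizes on each boundary component only a \emph{product} subgroup $k\mathbb Z[\gamma]\oplus n\mathbb Z[\mu]\leq\pi_1(\partial D)\times\pi_1(\mathbb S^1_3)$, whereas the statement quantifies over all subgroups $G\leq\mathbb Z^2$. Your treatment of the product case and your passage to Hermite normal form are both correct and in line with what the paper must have intended.

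The difficulty you flag at the end, however, is not merely a step "that must be resolved" -- it is a genuine homological obstruction, and the corollary as stated appears to be false in the sheared case. Take $G=\langle m[\gamma]+n[\mu]\rangle$ with $m\geq 1$ and $n\neq 0$, and suppose a connected covering $\hat M\to M$ has exactly two boundary components, both realizing $G$. Since $G\cap\pi_1(\mathbb S^1_3)=0$, the subgroup $H=p_*\pi_1(\hat M)\leq F_2\times\mathbb Z$ meets $\{1\}\times\mathbb Z$ trivially and is therefore the graph of a homomorphism $\psi\colon H_0\to\mathbb Z$, where $H_0=\mathrm{pr}_1(H)\leq F_2=\pi_1(\mathbb T^2-D)$. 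The double cosets $H\backslash(F_2\times\mathbb Z)/P$ coincide with $H_0\backslash F_2/\langle[\gamma]\rangle$, and the two boundary conditions force $[\gamma]^m\in H_0$, $g[\gamma]^mg^{-1}\in H_0$ (with $g$ a representative of the second coset), and $\psi([\gamma]^m)=\psi(g[\gamma]^mg^{-1})=n$. From $[\gamma]^m,g[\gamma]^mg^{-1}\in H_0$ one gets $[F_2:H_0]=2m<\infty$, so the corresponding surface covering $W\to\mathbb T^2-D$ is compact, oriented, with exactly two boundary circles $\partial_1,\partial_2$ whose homology classes satisfy $[\partial_1]+[\partial_2]=0$ in $H_1(W;\mathbb Z)$. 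Applying $\psi$, and identifying $[\partial_1]$ with the class of $[\gamma]^m$ and $[\partial_2]$ with that of $g[\gamma]^mg^{-1}$ in $H_1(W)\cong H_1(H_0)$, yields $\psi([\gamma]^m)+\psi(g[\gamma]^mg^{-1})=0$, i.e., $n=-n$, a contradiction. Thus the two boundary tori of any such $\hat M$ necessarily realize $\langle m[\gamma]+n[\mu]\rangle$ and $\langle m[\gamma]-n[\mu]\rangle$, which differ whenever $n\neq 0$; neither a Heisenberg quotient nor any other construction can repair this, because the obstruction lives on the surface factor alone. So the corollary cannot be proved as stated, and the proof of Proposition A.4 in dimension three needs to route around the sheared case (for instance by working directly with the amalgam $\pi_1(A)*_{\pi_1(C)}\pi_1(B)$ or by using a different extension of the universal cover of $M_2-\phi_2(\Omega)$) rather than through this corollary.
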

\begin{proof}
We write $x_0=(y_0,z_0)$ for some $y_0\in\partial D$ and $z_0\in\mathbb S^1$. The desired consequence follows from the fact $\pi_1(\partial M,x_0)\cong \pi_1(\partial D,y_0)\times \pi_1(\mathbb S^1,z_0)$.
\end{proof}

When the dimension $n$ is greater than $3$, we state the following simple fact.

\begin{lemma}\label{Lem: existence for covering D4+}
Let $M=(\mathbb T^{n-1}-B)\times \mathbb S_n^1$ and $x_0$ be a point in $\partial M$. Given any subgroup $G$ of $\pi_1(\partial M,x_0)$, we can find a covering $p:\hat M\to M$ such that $\partial \hat M$ is connected and it holds $p_*\pi_1(\partial\hat M,\hat x)=G$ for some point $\hat x$ in $\partial\hat M$.
\end{lemma}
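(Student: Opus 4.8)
The plan is to exploit the fact that for $n\geq 4$ the boundary sphere $\mathbb S^{n-2}=\partial B$ is simply connected, so that the fundamental group of $\partial M$ is carried entirely by the circle factor $\mathbb S^1_n$ and the desired covering is obtained simply by unwrapping that circle.

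First I would record the relevant fundamental groups. The boundary is $\partial M=\partial B\times\mathbb S^1_n\cong\mathbb S^{n-2}\times\mathbb S^1$, and since $n-2\geq 2$ we get $\pi_1(\partial M,x_0)\cong\mathbb Z$, generated by the class of $\{*\}\times\mathbb S^1_n$. Removing a ball from $\mathbb T^{n-1}$ with $n-1\geq 3$ does not change the fundamental group (van Kampen applies because the linking sphere $\mathbb S^{n-2}$ is simply connected), so $\pi_1(M)\cong\pi_1(\mathbb T^{n-1})\times\pi_1(\mathbb S^1_n)\cong\mathbb Z^{n-1}\times\mathbb Z$, and the inclusion $\iota\colon\partial M\hookrightarrow M$ sends the generator of $\pi_1(\partial M)$ to the generator of the last $\mathbb Z$-factor; in particular $\iota_*$ is injective. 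Consequently every subgroup $G\leq\pi_1(\partial M,x_0)$ is either trivial or equals $k\mathbb Z$ for some integer $k\geq 1$.

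Next I would exhibit the covering explicitly according to the two cases. If $G=k\mathbb Z$ with $k\geq 1$, take $\hat M=(\mathbb T^{n-1}-B)\times\mathbb S^1$ with $p=\id\times\rho_k$, where $\rho_k\colon\mathbb S^1\to\mathbb S^1$ is the connected $k$-fold covering of the circle; if $G=\{0\}$, take $\hat M=(\mathbb T^{n-1}-B)\times\mathbb R$ with $p=\id\times\exp$, the infinite cyclic covering unwrapping $\mathbb S^1_n$. In either case $\partial\hat M$ equals $\mathbb S^{n-2}\times\mathbb S^1$ or $\mathbb S^{n-2}\times\mathbb R$, which is connected since $\mathbb S^{n-2}$ is; after choosing $\hat x$ over $x_0$, the restriction $p|_{\partial\hat M}$ is $\id\times\rho_k$ (respectively $\id\times\exp$), which on $\pi_1$ is multiplication by $k$ on $\mathbb Z$ (respectively the trivial homomorphism, as $\mathbb S^{n-2}\times\mathbb R$ is simply connected). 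Hence $p_*\pi_1(\partial\hat M,\hat x)$ equals $k\mathbb Z=G$ (respectively $\{0\}=G$), which is exactly the assertion.

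I do not expect a genuine obstacle here: once one observes that $\mathbb S^{n-2}$ is simply connected for $n\geq 4$, the statement reduces to the elementary fact that every subgroup of $\pi_1(\mathbb S^1)\cong\mathbb Z$ is realised by a connected (possibly infinite-sheeted) covering of $\mathbb S^1$. The only point deserving a word of care is the trivial subgroup, which forces the infinite cyclic covering; but $(\mathbb T^{n-1}-B)\times\mathbb R$ is still a perfectly good non-compact covering space of $M$, and its boundary is simply connected as a product of simply connected manifolds. This is in contrast with the case $n=3$ treated in Corollary \ref{Cor: existence for covering}, where $\pi_1(\partial M)$ is genuinely two-dimensional and the boundary of the cover can fail to be connected.
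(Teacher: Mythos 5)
Your proof is correct and takes essentially the same route as the paper's, which simply invokes the product structure of $M$ and the observation that $\pi_1(\partial M,x_0)\cong\mathbb Z$ is generated by $\mathbb S^1_n$ when $n\geq 4$; you have merely spelled out the two cases (finite-index and trivial subgroup) explicitly via $\id\times\rho_k$ and $\id\times\exp$, which is exactly the construction the paper leaves implicit.
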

\begin{proof}
It follows directly from the product structure of $(\mathbb T^{n-1}-B)\times \mathbb S^1_n$ and the fact that $\pi_1(\partial M,x_0)$ is the free group generated by $\mathbb S^1_n$.
\end{proof}

We are going to give a proof for Proposition \ref{Prop: same homotopy local and global}.

\begin{proof}[Proof for Proposition \ref{Prop: same homotopy local and global}]
Recall
$$
(\mathbb T^n,i)\sharp(M_2,\phi_2)=(\mathbb T^n-\Omega)\sqcup_{\Phi}(M_2-\phi_2(\Omega)),\quad \text{where}\quad \Phi=\phi_2:\partial\Omega\to \partial(\phi_2(\Omega)).
$$
Clearly, we can view $\gamma$ as a map $\gamma:[0,1]\to \partial\Omega\subset M_2-\phi_2(\Omega)$ with $\gamma(0)=\gamma(1)=x_0$. Let
$$p_2:\hat M_2\to M_2-\phi_2(\Omega)$$
be the universal covering of $M_2-\phi_2(\Omega)$. Since $\gamma$ is homotopically non-trivial in $M_2-\phi_2(\Omega)$, any lift $\hat\gamma:[0,1]\to \hat M_2$ of $\gamma$ must satisfy $\hat\gamma(0)\neq \hat\gamma(1)$. If we can extend the covering $p_2$ to a covering
$$p:\hat M\to(\mathbb T^n,i)\sharp(M_2,\phi_2),$$
then $\hat\gamma$ is also a lift of $\gamma$ in $\hat M$ and the fact $\hat\gamma(0)\neq \hat\gamma(1)$ yields that $\gamma$ is homotopically non-trivial in $(\mathbb T^n,i)\sharp(M_2,\phi_2)$. Clearly the boundary map $\partial p_2:\partial\hat M_2\to \partial \phi_2(\Omega)$ is also a covering map. Since $\partial \phi_2(\Omega)$ is locally path-connected, so is $\partial \hat M_2$ and then $\partial\hat M_2$ is homeomorphic to the disjoint union of all its components with induced topology. That is ,we can write
$$
\partial\hat M_2=\bigsqcup_{\alpha\in \mathcal I} C_\alpha,
$$
where $\{C_\alpha\}_{\alpha\in\mathcal I}$ collects all components of $\partial \hat M_2$.

To define the desired extension, we make a discussion for the following two cases.

{\it Case 1. The dimension is three}.  Applying Corollary \ref{Cor: existence for covering} to each component $C_\alpha$, we can find a covering $p_{1,\alpha}:\hat M_{1,\alpha}\to \mathbb T^{3}-\Omega$ such that the boundary $\partial \hat M_{1,\alpha}$ has two connected components $\hat\partial_{1,\alpha}$ and $\hat\partial_{2,\alpha}$ such that there are homeomorphisms $\phi_{i,\alpha}:\hat\partial_{i,\alpha}\to  C_\alpha$ with
$$\partial p_{1,\alpha}|_{\hat\partial_{i,\alpha}}=\partial p_2\circ\phi_{i,\alpha} \quad\text{ for}\quad i=1,2.$$
Define
$$
\hat M_1=\bigsqcup_{\alpha\in \mathcal I}\hat M_{1,\alpha}
$$
and
$$
\hat\partial_i=\bigsqcup_{\alpha\in\mathcal I}\partial_{i,\alpha},\quad i=1,2.
$$
Denote
$$
\phi_i:\hat\partial_i\to \partial\hat M_2,\quad x\mapsto \phi_{i,\alpha}(x)\quad\text{if}\quad x\in \hat\partial_{i,\alpha}.
$$
Then $\phi_i$ defines a homeomorphism between $\hat\partial_i$ and $\partial\hat M_2$. Let
$$
\hat M=\left(\hat M_1\sqcup_{\phi_1}\hat M_2\right)\sqcup_{\phi_2}\hat M_2.
$$
Then we can glue the maps $p_1$ and $p_2$ on $\hat M$ to obtain a covering $p:\hat M\to (\mathbb T^n,i)\sharp(M_2,\phi_2)$, which serves as a desired extention for $p_2:\hat M_2\to M_2-\phi_2(\Omega)$.

{\it Case 2. The dimension is greater than three}. The argument is similar to that in case 1 except some slight modifications. For each $C_\alpha$ we use Lemma \ref{Lem: existence for covering D4+} to construct a covering $p_{1,\alpha}:\hat M_{1,\alpha}\to \mathbb T^{n}-\Omega$ such that the boundary is homeomorphic $C_\alpha$ by homeomorphism  $\phi_{\alpha}:\partial\hat M_{1,\alpha}\to  C_\alpha$ satisfying
$$\partial p_{1,\alpha}|_{\partial\hat M_{1,\alpha}}=\partial p_2\circ\phi_{\alpha}.$$
Then we can define $\hat M_1$ in the same way such that there is a homeomorphism $\phi:\partial\hat M_1\to\partial\hat M_2$. Now the desired extension is given by $\hat M_1\sqcup_\phi\hat M_2$.
\end{proof}

\bibliography{bib}
\bibliographystyle{amsplain}
\end{document}